\definecolor{MT0}{RGB}{219, 247, 255}
\definecolor{MT1}{RGB}{153, 230, 255}
\definecolor{MT2}{RGB}{30, 144, 255}
\definecolor{MT3}{RGB}{31,199,255}
\definecolor{MT4}{RGB}{0,51,255}
\definecolor{MT5}{RGB}{0,46,184}
\definecolor{MT6}{RGB}{0,0,128}
 \newtheorem{theorem}{Theorem}[section]
 \newtheorem{corollary}[theorem]{Corollary}
 \newtheorem{proposition}[theorem]{Proposition}
 \newtheorem{lemma}[theorem]{Lemma}
 \theoremstyle{definition}
 \newtheorem{definition}[theorem]{Definition}
 \newtheorem{example}[theorem]{Example}
 \newtheorem{remark}[theorem]{Remark}
 \newcommand{\TT}{\mathcal{T}}
 \newcommand{\NN}{\mathbb{N}}
 \newcommand{\ZZ}{\mathbb{Z}}
 \newcommand{\Prec}{\pmb{<}}
 \newcommand{\Succ}{\pmb{>}}
 \newcommand{\PPrec}{\pmb{<\!\!<}}
 \newcommand{\SSucc}{\pmb{>\!\!>}}
\newcommand{\Preceq}{\pmb{\leq}}
\newcommand{\Succeq}{\pmb{\geq}}
\newcommand{\cT}{{\sf{T}}}
\newcommand{\cN}{{\sf{N}}}
 \newcommand{\cB}{{\sf{B}}}
 \newcommand{\Ht}{\textrm{Ht}}
 \newcommand{\supp}{\textrm{Supp}}
 \newcommand{\cone}{\textrm{cone}}
 \newcommand{\LR}{\textrm{LRep}}
 \newcommand{\SLR}{\textrm{SLRep}}
 \newcommand{\PP}{\mathcal{P}}
  \newcommand{\lcm}{\textrm{lcm}}
\def\Either{\mbox{\ either }}
\def\Or{\mbox{\ or }}
\definecolor{bcP}{rgb}{1,0,1}
\newcommand{\MFFunctor}[1]{\underline{\mathbf{Mf}}_{#1}}
\newcommand{\MFScheme}[1]{\mathbf{Mf}_{#1}}
\newcommand{\Sets}{\underline{\textnormal{Set}}}
\newcommand{\MSFunctor}[1]{\underline{\mathbf{Ms}}_{#1}}
 \title[A general framework for Noetherian well ordered polynomial reductions]{A general framework for Noetherian well ordered polynomial reductions}
\author[M.Ceria]{Michela Ceria}
\address{Department of Computer Science of University of Milan, Via Comelico 39, Milano, Italy}
\email{\href{mailto:michela.ceria@gmail.com}{michela.ceria@gmail.com}}
\author[T.Mora]{Teo Mora}
\address{Dipartimento di Matematica dell'Universit\`{a} di Genova,         Via Dodecaneso 35,
  16146  Genova,  Italy.
         }
\email{\href{mailto:theomora@disi.unige.it}{theomora@disi.unige.it}}
\author[M.Roggero]{Margherita Roggero}
\address{Dipartimento di Matematica dell'Universit\`{a} di Torino\\ 
         Via Carlo Alberto 10, 
         10123 Torino, Italy}
\email{\href{mailto:margherit
a.roggero@unito.it}{margherita.roggero@unito.it}}
\keywords{Polynomial reduction, ideal membership, well-founded order}
\subjclass[2000]{14C05, 14Q20, 13P10}
\begin{document}
\newpage
\fussy

\begin{abstract}
Polynomial  reduction is one of the main tools in computational algebra with
innumerable applications in many areas, both pure and applied. Since many years
both the theory and an efficient  design of the related algorithm
have been solidly established.
This paper presents a general definition of polynomial reduction structure,
studies its features and highlights the
aspects needed in order to grant and to efficiently test the main properties
(noetherianity, confluence, ideal membership).
The most significant aspect of this analysis is a negative reappraisal of the
role of the notion of term order which is usually considered a central and
crucial tool in the theory. In fact, as it was already established in the 
computer science context in relation with  termination of algorithms, most of
the properties can be obtained simply consi\-dering
a well-founded order, while  the classical requirement that it be preserved
by multiplication is irrelevant.
The last part of the paper shows how the polynomial basis concepts present in
li\-terature are interpreted in our language and their properties are
consequences of the general results established in the first part of the
paper.\end{abstract}

\maketitle
 
  \section{Introduction}\label{Intro}
  Buchberger reduction was introduced in 1899 by Gordan \cite{Gor1} as a technical tool in his proof of  Hilbert's {\em Basissatz} \cite{Hil} but, at that time, at least the PDE community was aware of the concepts of generic initial ideal introduced in 1896 by Delassus \cite{Del} and of S-polynomials introduced in 1910 by Riquier \cite{Riq2}. This knowledge was summarized by Janet in \cite{Jan1}.
  
  When such theory was independently rediscovered by Buchberger \cite{Bu1,Bu2,Bu3} under the name of Gr\"obner basis, the Pandora box  was opened:   
  Buchberger Theory and Algorithm introduced for polynomial rings over a field \cite{Bu1,Bu2,Bu3}  was extended to polynomial ring over the integers \cite{KrK0}, over Euclidean domains \cite{KrK}, over each ring on which ideal membership is testable and syzygies are computable \cite{Zac}, over domains  \cite{P} and PIRs \cite{M}, to non-commutative
rings which satisfy Poincar\'e-Birkhoff-Witt Theorem \cite{Be}, Lie algebras \cite{AL1,AL2}, solvable polynomial rings \cite{KrW,Kr}, skew polynomial rings \cite{W,Ga1,Ga2,Ga3},  multivariate Ore extensions \cite{P1,P2,BGV,Ore},
other 
algebras
 which satisfy Poincar\'e-Birkhoff-Witt Theorem  \cite{A0,Lev,Lev2},   semigroup rings \cite{Ros,MR1,MR2}, function rings \cite{R2,R3}, non-commutative free algebras \cite{Be}, all effective rings \cite{Bath, Spic},
 reduction rings \cite{St1,St2,St3,St4,Male}, involutive bases \cite{Pom,Z,PomAk,Gerdt, GB1, GB2,BG3,BG4,GB5,S,SB}, marked bases \cite{BCLR,BCR,CR}.

Except \cite{BCLR,BCR,CR} and Gordan\footnote{which actually used the lex ordering induced by $x_1<\ldots<x_n$ without making reference to its semigroup ordering property but only to its well-orderedness.}, all these results make a strong and non-necessary requirement in order to grant termination of   the reduction procedures; in fact they imposed a semigroup ordering on the set of the monomials, i.e. an ordering that
preserves multiplication by variables, while a noetherian well-founded ordering can be  sufficient.

It is true that the results reported by Janet apply the deglex ordering induced by $x_1<\ldots<x_n$ and explicitly assume that the noetherian ordering  preserves multiplication by variables\footnote{Actually Riquier \cite{Riq1,Riq2} applied his theory to a class of orderings which is the classical representation of term ordering introduced by Erd\"os \cite{E} and Robbiano \cite{Robbiano}.}, but their motivations are completely different:
\begin{itemize}
  \item for the researchers developing techniques for solving PDE's multiplication by variable was just an algebraic notation for derivation; and in each calculus course, derivation of a formula by any single variabile is naturally performed by scanning it;
  \item Hilbert's proof of the {\em Nullstellensatz} is done by inductively performing euclide\-an division in the univariate polynomial ring $K[x_1,\ldots,x_{n-1}][x_n]$; while Hil\-bert not even make reference to an ordering of the monomials, it is obvious that a reformulation of Hilbert's reduction in terms of Buchberger's reduction requises the deglex ordering 
  induced by $x_1<\ldots<x_n$ .
\end{itemize}

While the assumption of having a term ordering is obviously justified for historical reasons in the results reported by Janet, we cannot imagine any valid reason for maintaining such an irrelevant assumption in the  research started from the introduction of Groebner bases theory. 

Actually, this assumption hinders the study of Hilbert scheme; it is well-known   \cite{B-M} that deformations of the Groebner basis of an ideal ${\sf I}$ in the polynomial ring ${\mathcal P}$ are a flat family and can thus be applied for studying geometrical deformations of the scheme ${\mathcal X}$ defined by ${\sf I}$. However such families of deformations in general cover only locally closed subschemes of Hilbert scheme and are not sufficient to study neighbourhoods of deformations of ${\mathcal X}$, {\em id est} opens of  Hilbert scheme;
such opens can be obtained instead by considering \cite{BLR} those ideals ${\sf I}'$ of ${\mathcal P}$ which share with 
${\sf I}$ a fixed monomial basis of the quotient ${\mathcal P}/{\sf I}$. In order to determine  the family of all such ideals ${\sf I}'$ of ${\mathcal P}$,
term-ordering free bases of polynomial ideals were introduced, under the label of {\em marked bases} in \cite{BCLR,BCR,CR}.

Following Riquier and Janet,
given a finite set $\mathcal F$ of  polynomials,
they allow to multiply each $f\in \mathcal F$ only by a restricted set of variables ({\em  multiplicative variables} in Janet formulation) or, in general, by an order ideal $\tau_f$ of terms ({\em multiplier set}).
It is then sufficient for them to restrict the requirement of preserving leading terms to such subsets of multipliers for obtaining well-founded orders which are not semigroup ones but however grant a Noetherian 
reduction. Clearly, this does not contradicts  Reeves-Sturmfels Theorem for the elementary reason that the aforementioned theorem requires the application of the whole set of terms as multipliers.

The aim of this paper is to study the main properties  of the consequent Noetherian 
reduction (and its differences with Buchberger reduction); we cover Noetherianity, weak Noetherianity,  confluency, canonical forms; moreover we import in our setting results available within the theories of Greobner bases and of involutiveness as Buchberger's and M\"oller's Criteria and Janet-Schreyer approach for computing resolutions. 

 Fixed the notation (Section \ref{Notat}) and introduced the definition and related notions of {\em reduction structure} (Section \ref{Strutture}), we discuss (Section \ref{MSetsSec}) {\em marked sets} and the associated rewriting rule $\rightarrow$, focusing on its main properties, noetherianity, weak noetherianity, their relation with the orderedness of the related reduction structure  (Sections \ref{Noeth1Sec}, \ref{Noeth2Sec}) and with  Reeves-Sturmfels Theorem (Theorem~\ref{REEVESTURMFELS}), the structure of the related Gr\"obner representation (Proposition~\ref{ScritturaDistinti1} of Section \ref{Noeth2Sec}),  confluency (Section \ref{Conflusect}), criteria for marked bases 
 (Section \ref{sez:disgiunti}) and 
for avoiding useless reductions (Section \ref{Criteriasec});   the functorial description of reduction structures is contained in appendix \ref{AppendixFunct}.  

Next we discuss  stably ordered reduction structures (Section \ref{sez:ordered}). Finally (Sections \ref{casispeciali}, \ref{zerodim}) we cover the most important types of known polynomial bases consistent with a term order reformulating them in our language.

\section{Notations.}\label{Notat}
 
Consider the polynomial ring $$\PP:=A[x_1,...,x_n]=\bigoplus_{d \in \NN}\PP_d$$
in $n$ variables  and coefficients in the base
field $A$. For every set $V\subset \PP$ we denote by$\langle V \rangle$  the $A$-module generated by $V$. When more than one base field $A$ is involved we write $\PP_A$ and  $ {}_A\langle V\rangle$ instead of $\PP$ and  $ \langle V\rangle$.

When an order on the variables comes into play,
we consider $x_1<x_2<...<x_n$.

The \emph{set of terms} in the variables $x_1,...,x_n$ is  
$$\mathcal{T}:=\{x^{\alpha}=x_1^{\alpha_1}\cdots x_n^{\alpha_n},\,(\alpha_1,...,\alpha_n)\in \NN^n \}.$$

For every polynomial $f\in \PP$,
$deg(f)$ is its usual degree and  $deg_i(f)=deg_z(f)$ is
 its degree with respect to the variable $x_i=z$.

Given a term $x^\alpha  \in \mathcal{T}$,  we denote $\vert \alpha\vert:=\deg(x^\alpha)$ and set
$$\max(x^\alpha)=\max\{x_i\ :\  x_i\mid x^\alpha \}   \textit , \, \min(x^\alpha)=\min\{x_i\  :\  x_i \mid x^\alpha\}$$
the maximal and the minimal variable appearing
 in $x^\alpha$ with nonzero exponent.

If $ \{x_{j_1},...,x_{j_r}\}\subset \{x_1,...,x_n\}$, we define 
$$\mathcal{T}[x_{j_1},...,x_{j_r}]:=\{x_{j_1}^{\alpha_{j_1}} \cdots x_{j_r}^{\alpha_{j_r}},\, (\alpha_{j_1},...,\alpha_{j_r})\in \NN^r\}. $$

 For each $p \in \NN$, and for all $V \subseteq \PP$,
$$V_p:=\{f \in V \ :  \ f  \,\textrm{ homogeneous and }  \, \deg(f)=p\};$$
in particular:
$$\mathcal{T}_p:=\{x^\alpha   \in \mathcal{T}\, : \, \deg(x^\alpha)=p\}.$$

We also denote $\mathcal{T}_{\geq p}:=\{x^\alpha   \in \mathcal{T}\, : \, deg(x^\alpha)\geq p\}.$ 

 Once a well-founded order $<$ is fixed in $\TT$ 
then 
each $f\in \PP$  has a unique representation as 
an ordered linear combination of terms $t\in\TT$ with coefficients in $A$:
$$f = \sum_{i=1}^s c(f,t_i) t_i : c(f,t_i) \in A\setminus\{0\}, t_i \in \TT, 
t_1 > \cdots > t_s.$$
\def\lc{\mathop{\rm lc}\nolimits}\
The {\em support} of $f$ is the set $$\supp(f)
:= \{t : c(f,t)\neq 0\}=\{t_1,\ldots ,t_s\};$$ we further denote
${\bf T}(f) := t_1$
the {\em maximal term}\index{maximal!term} of $f$, 
$\lc(f) := c(f,t_1)$ its {\em leading coefficient}\index{leading!cofficient} and 
${\bf M}(f) := c(f,t_1)t_1$ 
its {\em maximal monomial}\index{maximal!monomial}.
 
\def\lc{\mathop{\rm lc}\nolimits}\

For each $f,g\in{\mathcal P}$ such that $\lc(f) = 1 = \lc(g),$
the {\em S-polynomial of $f$ and $g$}  \cite{Bu1,Bu2}\cite[II, Definition~25.1.2.]{SPES} is 
the polynomial
$$S(g, f) := {\frac{\lcm({\bf T}(f),{\bf T}(g))}{{\bf T}(f)}} f -  
{\frac{\lcm({\bf T}(f),{\bf T}(g))}{{\bf T}(g)}} g.$$
 
For an ordered set $F=\{  f_1,...,f_s\}\in \PP $ we denote $Syz(F)$ its \emph{syzygy module }
$$Syz(F)=\{(g_1,...,g_s)\in \PP^s,\, \sum_{i=1}^s  g_i f_i=0 \},$$
all the \emph{syzygies of} $F$ being its elements $(g_1,...,g_s)\in Syz(F)$.

If $I$ is either a monomial ideal
 or a \emph{semigroup ideal}\footnote{
i.e. $\forall x^\eta  \in \mathcal{T},\,  x^\gamma  \in I \Rightarrow x^{\eta+\gamma} \in I $.}
we denote 
by ${\sf N}(I)$ the \emph{order ideal} (or: \emph{normal set})\footnote{Observe that a set of terms ${\sf N} \subset \mathcal{T}$ is an order ideal if
and only if the complementary set
$I:=\mathcal{T}\setminus {\sf N}$ is a semigroup ideal.}
${\sf N}(I):=\mathcal{T}\setminus I$.

\section{Introducing Reduction Structures}\label{Strutture} 
 \begin{definition}\label{RS} 
 A \emph{reduction structure} (RS for short)  $\mathcal{J}$ in $\TT$ is a $3$-tuple  
$$(M, \lambda:=\{\lambda_\alpha\, , \ x^\alpha \in M\}, \tau:=\{\tau_\alpha \, , \ x^\alpha \in M\})$$ 
that satisfies the following conditions
 \begin{itemize}
\item $M$ is a \emph{finite} set of terms; we denote by   $J$ the  semigroup ideal generated by $M$; 
\item  for all $x^\alpha\in M$, $\tau_\alpha \subseteq \mathcal T$ is an order ideal, called  \emph{multiplicative set of} 
 $x^\alpha$, s.t. $\bigcup_{x^\alpha \in M} \cone(x^\alpha)=J $, where $\cone(x^\alpha) :=\{ x^{\alpha+\eta}\ \ \vert\ 	 \  x^\eta \in \tau_\alpha \}$ is the  \emph{cone} of $x^\alpha $; 
\medskip

\item  for all $x^\alpha\in M$,  $\lambda_\alpha$ is a subset of  $ \mathcal T\setminus \cone(x^\alpha)$ that we call  \emph{tail set} of  $x^\alpha$.
 \end{itemize}
 \end{definition}

 \begin{lemma}  \label{primolemma} Let $\mathcal J$ be a RS.
 Then, there is at least a term  $x^\alpha \in M$  s.t.  $\tau_\alpha=\TT$. In particular it holds 
$\mathcal T=\bigcup_{x^\alpha \in M} \tau_\alpha$. 
\end{lemma}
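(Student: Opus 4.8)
The plan is to argue by contradiction, exploiting the fact that $\tau_\alpha=\TT$ is equivalent to the cone $\cone(x^\alpha)$ being the \emph{full} principal semigroup ideal $\{x^{\alpha+\eta}\mid x^\eta\in\TT\}$. So I would assume that $\tau_\alpha\neq\TT$ for every $x^\alpha\in M$ and then produce a single term lying in $J$ but in none of the cones $\cone(x^\alpha)$, contradicting the defining equality $\bigcup_{x^\alpha\in M}\cone(x^\alpha)=J$ of a reduction structure.

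First, for each $x^\alpha\in M$ the set $\TT\setminus\tau_\alpha$ is nonempty (as $\tau_\alpha\neq\TT$), so I would fix a term $x^{\beta_\alpha}\in\TT\setminus\tau_\alpha$. The only property of $\tau_\alpha$ I need is that it is an order ideal, hence closed under passing to divisors: since $x^{\beta_\alpha}\notin\tau_\alpha$, every multiple of $x^{\beta_\alpha}$ also fails to lie in $\tau_\alpha$, for otherwise its divisor $x^{\beta_\alpha}$ would belong to $\tau_\alpha$.

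Next, using that $M$ is finite, I would set
$$x^\delta := \lcm\bigl(x^{\alpha+\beta_\alpha}\ :\ x^\alpha\in M\bigr).$$
For each $x^\alpha\in M$ we have $x^{\alpha+\beta_\alpha}\mid x^\delta$, hence $x^\alpha\mid x^\delta$ and $x^{\beta_\alpha}\mid x^{\delta-\alpha}$. By the previous step $x^{\delta-\alpha}\notin\tau_\alpha$, and since $x^{\delta-\alpha}$ is the unique exponent realizing $x^\delta=x^{\alpha+(\delta-\alpha)}$, this shows $x^\delta\notin\cone(x^\alpha)$. As this holds for every $x^\alpha\in M$, the term $x^\delta$ lies in no cone; on the other hand $x^\delta$ is a multiple of each $x^\alpha\in M$, so $x^\delta\in J$. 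Thus $x^\delta\in J\setminus\bigcup_{x^\alpha\in M}\cone(x^\alpha)$, the desired contradiction. Consequently some $\tau_\alpha=\TT$, and the final assertion $\TT=\bigcup_{x^\alpha\in M}\tau_\alpha$ is then immediate, since that single $\tau_\alpha$ already exhausts $\TT$.

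The one genuinely substantive point — and the place where the hypotheses really enter — is the construction of the witness $x^\delta$: I expect the main obstacle to be recognizing that a single $\lcm$ of the shifted obstructions $x^{\alpha+\beta_\alpha}$ simultaneously defeats \emph{all} the cones at once, and that divisibility-closure of the order ideals $\tau_\alpha$ is exactly what propagates each local obstruction $x^{\beta_\alpha}$ up to the large common multiple. Everything else (finiteness of $M$, membership of $x^\delta$ in $J$) is routine.
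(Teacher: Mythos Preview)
Your proof is correct and follows essentially the same route as the paper's: both argue by contradiction, pick a term outside each $\tau_\alpha$, form a common multiple of the shifted obstructions (the paper uses the product of the $x^{\alpha+\eta_\alpha}$ where you use their $\lcm$, an immaterial difference), and then use the order-ideal property of $\tau_\alpha$ to reach a contradiction with $\bigcup_{x^\alpha\in M}\cone(x^\alpha)=J$. The only cosmetic variation is that the paper phrases the final step as ``$x^\delta$ lies in some cone, hence the obstruction belongs to $\tau_\alpha$'', while you phrase it contrapositively as ``$x^\delta$ lies in no cone''.
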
 
\begin{proof}
Suppose that the assertion is false and, for each
$x^{\alpha_i}\in M$, choose a term $x^{\eta_i}$ not belonging to   $\tau_{\alpha_i}$.    We denote   $x^\beta $  the product of the terms $x^{\alpha_i+\eta_i}$,  $x^{\alpha_i}\in M$. 
By definition of RS there is a term 
 in $ M$, let it be $x^{\alpha_1}$, whose cone contains $x^{\beta}$, so  $x^{\beta-\alpha_1}$ is a multiple of  $x^{\eta_1}$ and belongs to $\tau_{\alpha_1}$. Since $\tau_{\alpha_1}$
 is an order ideal, it contains also  $x^{\eta_1}$, leading to a contradiction.
\end{proof}

\begin{definition}\label{def:substructure} We will call {\it  substructure} of   $\mathcal J=(M,\lambda , \tau )$ each RS of the form $\mathcal J'=(M,\lambda, \tau')$ s.t. for each  $x^\alpha \in M$ it holds 
$\tau'_\alpha \subseteq \tau_\alpha$.  In this case we will write $\mathcal J' \subseteq \mathcal J$.
\end{definition}

Reduction Structures of the following type will be important in the whole paper
\begin{definition}
A Reduction Structure $\mathcal J$ is:
\begin{itemize}
\item {\it homogeneous} if $\forall x^\alpha\in M$   it holds $\lambda_\alpha \subset \TT_{\vert \alpha\vert}$,
\item with {\it finite tails} if $\forall x^\alpha\in M$ it holds  $\vert  \lambda_\alpha \vert <\infty$,
\item  with {\it reduced tails} if $\forall x^\alpha\in M$ it holds $  \lambda_\alpha\subseteq \cN(J)$, 
\item  {\it coherent with a term order} $\prec$  if $\forall x^\alpha\in M$ and $\forall x^\gamma \in \lambda_\alpha$  it holds $ x^\alpha \succ x^\gamma$, 
\item with {\it maximal cones} if  $\forall x^\alpha\in M$ it holds $  \tau_\alpha=\TT$,
\item  with {\it disjoint cones} if $\forall x^\alpha, x^{\alpha'}\in M$, $x^\alpha\neq  x^{\alpha'}$,  it holds $\cone(x^\alpha)\cap\cone(x^{\alpha'})=\emptyset$,
\item with {\it multiplicative variables} if  for each $x^\alpha\in M$ exists 
$$\mu_\alpha\subseteq \{x_1, \dots, x_n\} \mbox{ s.t. } \tau_\alpha=\TT[\mu_\alpha].$$ 
\end{itemize}
More generally, we will call  $x_i$  { \it multiplicative variable} for   $x^\alpha\in M$ if  $x_i\tau_\alpha \subset \tau_\alpha$.
\end{definition}

As we will see in details in Section \ref{casispeciali}, there are RSs that give the natural framework in which we find Gr\"obner bases and their properties. They are built as follows: $M$ is any finite set of terms;  for each term $x^\alpha \in M$, $\tau_\alpha$ is the whole $\mathcal T$ and $\lambda_\alpha$ is the  sets of   terms lower than $x^\alpha$ w.r.t. a fixed term order.  In the terminology just introduced, these RSs are coherent with a term order,  have multiplicative variables and maximal cones. 

 On the other hand, our definition also includes  strange RSs that cannot be included neither in a standard Gr\"obner framework nor in any other type of  polynomial  bases that (in our knowledge)  are already present  in literature. 

\begin{example} \label{strano}   In $A[x,y]$ let us consider the RS $\mathcal J$ given by
\begin{itemize}
\item $M=\{x^3, xy, y^3\}$; 
\item  $\lambda_{x^3}=\lambda_{y^3}=\{x^2y,xy^2,x^2,xy,y^2,x,y,1\}$, 
$\lambda_{xy}=\{x,y,1\}$
\item       $\tau_{x^3}=\TT[x]$,  $\tau_{xy}=\TT[x,y]$,  $\tau_{y^3}=\TT[y]$.
 \end{itemize}
 This RS is not cosistent with a term order; however
it has
two of the most useful features that we can expect by a polynomial rewriting  rule and that we will discuss in the following sections:  noetherianity and confluence. 
\end{example}
 
\section{Marked sets and rewriting rules}\label{MSetsSec}

\begin{definition}[\cite{RS}]
A \emph{marked polynomial} is a polynomial
 $f\in \PP$ together with a fixed term $Ht(f)$, its \emph{marked term} that appears in $f$ with coefficient $1_A$.
\end{definition} 

We use RSs in order to investigate when and how marked polynomials can be efficiently applied as rewriting rules (for theoretical results on polynomial rewriting rules see \cite{DM,BL,BO}).

\begin{definition}
Given a RS $\mathcal{J}=(M, \lambda, \tau)$,  consider for each $x^\alpha \in M$ a monic marked polynomial  $f_\alpha\in \PP $ s.t. $\Ht(f_\alpha)=x^\alpha$ and $\supp(f_\alpha-x^\alpha)\subset \lambda_\alpha$.  We 
call \emph{marked term}  of $f_\alpha$  such term $x^\alpha$  and {\it tail} of $f_\alpha$ the difference $f_\alpha -x^\alpha$.  

The set  $\mathcal{F}=\{f_\alpha\}_{x^\alpha\in M}$ of polynomials in $\PP$ is called \emph{ marked set } on $\mathcal J$; note that $M$ is indeed the set of the marked terms of $\mathcal{F}$.

We denote by $\tau\mathcal F$ the set $\tau\mathcal F:=\{x^{\eta}f_{\alpha}: x^{\eta} \in \tau_\alpha\},$  by $\langle\tau \mathcal F\rangle$ the $A$-vector space  generated by $\tau\mathcal F$  and by $(\mathcal F)$ the ideal of  $\PP$ generated by $\mathcal F$.
\end{definition}

A key notion in all the theory is the following
\begin{definition}\label{BaseMark} 
We say that  a marked set    $\mathcal F$  over a   RS $\mathcal J$  
is a \emph{ marked basis } on $\mathcal J$ if $\cN(J)$ 
is a free set of generators for   $A[x_1, \dots , x_n]/ ( \mathcal F) $ as $A$-vector space, i.e. if it holds 
$$ ( \mathcal F) \oplus \langle {\cN}(J)\rangle =\PP.$$
\end{definition}

We can associate to a marked set $\mathcal F$ on $\mathcal J$ a reduction procedure  $\rightarrow_{\mathcal F \mathcal J}^+$. 

For $g,h\in \PP$, it holds $g\rightarrow_{\mathcal F \mathcal J}h$ 
iff there are a term $x^\gamma \in \supp(g)$, and an element  $x^\alpha\in M$ s.t.  $x^\gamma=x^{\alpha + \eta} \in  \cone(x^\alpha)$
and $h = g-cx^\eta f_\alpha$, where $c=c(g,x^\gamma)\in A$ is the coefficient of $x^\gamma $ in $g$.

We denote $\rightarrow_{\mathcal F \mathcal J}^+$  its  transitive closure.

We also remark that if $g\in \PP$ and $\supp(g)\in\cN(J)$,  then there is no $h\in\PP,h\neq g$, such that $g\rightarrow_{\mathcal F \mathcal J}^+h$; if this happen, we say that $g$ is {\it reduced}  w.r.t. $J$ or is a $J$-{\it remainder}\footnote{Recall that $J$ denotes the semigroup ideal generated by $M$.}.
 
\begin{definition}  A  {\it  rewriting rule} is a couple $( \mathcal F, \, \rightarrow_{\mathcal F \mathcal J}^+)$, where $\mathcal F$ is a marked set over a RS  $\mathcal J$  and $\rightarrow_{\mathcal F \mathcal J}^+$ is the binary relation defined above.
\end{definition}

\begin{remark} \label{codefinite}     If $\mathcal F=\{ f_\alpha , \ x^\alpha \in M\}$ is a marked set over $\mathcal J=(M,\lambda, \tau)$, then it is also marked over every     RS $\mathcal J'=(M',\lambda', \tau')$ such that  $M'=M$ and $\lambda_\alpha' \supseteq \supp(f_\alpha-x^\alpha)$ for every $x^\alpha \in M$.   From a different point of view, note that   $\mathcal F$ is marked also on every substructure $\mathcal J'$ of $\mathcal J$.

Some notion related to $\mathcal F$  depends on which RS we are considering, while others do not. For instance, it is obvious from the definition that   the notion of marked basis  does not depend on the   RS.  
  On the other hand, a  same set of marked polynomials $\mathcal F$   related to  several  RSs gives rise to essentially different rewriting rules depending  on the set of multiplicative terms. 

If $\mathcal J'=(M,\lambda, \tau')$ is a substructure of $\mathcal J$,  then $\tau'_\alpha \subseteq \tau_\alpha$ for every $x^\alpha \in M$, so that 
$$g\rightarrow_{\mathcal F \mathcal J'}^+ h\quad  \Longrightarrow   \quad  g \rightarrow_{\mathcal F \mathcal J}^+  h  .$$

An interesting example of  RS on which   $\mathcal F$  is marked and also the  rewriting rule is not modified, is 
  $\widetilde{\mathcal J}:=(M,\{\lambda_\alpha \cap\supp (f_\alpha) \}, \tau)$
 The terms of each $\lambda_\alpha$ not appearing in $f_\alpha$ are irrelevant in the reduction steps involving $f_\alpha$. Moreover, they are irrelevant for the steps not involving $f_\alpha$. This yields an advantage: we can work over RSs with finite tails. Anyway, notice that the set of marked sets over $\mathcal J'$ is a proper subset of the analogous over $\mathcal J$.
 \end{remark} 

 While in principle the theory (but not the practice!) of RSs can cover Hironaka Theory and reduction of series in the setting of \cite{7var, Graal} and \cite[II.Hironaka Theorem 24.6.16]{SPES} substituting the Noetherianity assumption with \emph{inflimitedness} 
 
 \cite[II.Definition~24.5.2;IV.Definition~50.3.3]{SPES} we restrict our paper to the polynomial setting ; thus
  {\bf in what follows, each RS will have finite tails}.

\begin{definition} 
If $g \rightarrow_{\mathcal F \mathcal J}^+ l$ and $\supp(l)\subset \cN(J)$,   we write $ g \rightarrow_{\mathcal F \mathcal J}^+ l\downarrow$ and  call  $l$ a \emph{reduced form} (or $J$- {\it remainder}) of  $g$, obtained via $\mathcal F$.   If such a polynomial $l$ exists, we say that $g$ \emph{has a complete reduction w.r.t. }$\mathcal F\mathcal J$. 

Notice that  there could be  several terms in  $\supp(g)\cap J$ and that  each of  them   can belong to several   cones. 
Therefore, the reduction performed on a general polynomial $g$  by a rewriting rule is in general far for being unique nor, in principle, is unique its output, unless \cite{BL,BO} it is both Noetherian and (locally) confluent.
\end{definition}

\section{Noetherianity I: well-founded orders}\label{Noeth1Sec}
In this section we discuss  some relations between the different types of  RSs we have  introduced in relation with the noetherianity of the rewriting rules.  

\begin{definition} \label{defnoetherina}

 We say that 
 $ \rightarrow_{\mathcal F \mathcal J}^+$  is  \emph{noetherian} if 
 there is no inifinite reduction chain
 $$g_1\rightarrow_{\mathcal F \mathcal J}^+g_2\rightarrow_{\mathcal F \mathcal J}^+g_3\rightarrow_{\mathcal F \mathcal J}^+\cdots.$$
 
We call $\mathcal J$  \emph{noetherian}    
 if for each  marked set $\mathcal F$  on $\mathcal J$, the rewriting rule 
 $ \rightarrow_{\mathcal F \mathcal J}^+$ is noetherian. 
The RS $\mathcal J$  is \emph{weakly noetherian}  
 if it has a noetherian substructure.
 \end{definition}

If $\mathcal J$  is weakly noetherian, each polynomial $g $ has a complete reduction
$ g \rightarrow_{\mathcal F \mathcal J}^+ l\downarrow$,  though there could be also  infinite sequences of base steps of reduction starting on $g$.

Here an example of a RS that is not  weakly noetherian (nor a fortiori noetherian).

\begin{example}\label{EXORSaltro}

Let us consider  the RS $\mathcal J$ given by 

\begin{itemize}

\item $M=\{xy,x^3, y^3\}$;

\item $\tau_{xy}=\TT[x,y]$, \ $\tau_{x^3}=\TT[x]$,\ $\tau_{y^3}=\TT[y]$;

\item $\lambda_{xy}=\{x^{2},y^2\}$,\ $\lambda_{x^3}=\lambda_{y^3}=\emptyset$.

\end{itemize}

As the cones are disjoint, $\mathcal J$ has no proper substructure, hence it is sufficient to show that $\mathcal J$ itself  is not noetherian.

Let us consider  the marked set $\mathcal F=\{f_{xy}=xy-x^2-y^2,f_{x^3}=x^3,f_{y^3}=y^3\}$ over $\mathcal J$.  

We obtain an infinite reduction chain as follows:

\begin{align*}
 & x^2y\longrightarrow_{\mathcal{FJ}} x^2y-xf_{xy}= 
 x^3+xy^2 \longrightarrow_{\mathcal{FJ}} x^3+xy^2-f_{x^3}= 
 xy^2 \longrightarrow_{\mathcal{FJ}}  xy^2-yf_{xy}= \\
& x^2y+y^3  \longrightarrow_{\mathcal{FJ}} x^2y+y^3-f_{y^3}=x^2y \longrightarrow_{\mathcal{FJ}} x^2y-xf_{xy}\dots .
\end{align*}

Note that  at each  step of every possible sequence of reductions of $x^2y$ we  find  a  polynomial of the type  $x^2y+nx^3+my^3$ or $xy^2+nx^3+my^3$ with $n,m\in \ZZ$, and none of them is a $J$-remainder.

\end{example}

Here an example of a  weakly noetherian  RS that is  not noetherian.

\begin{example}\label{EXORS2} 

Let us consider  the RS $\mathcal J$ given by 

\begin{itemize}

\item $M=\{xy,y^2,x^3, y^3\}$;

\item $\tau_{xy}=\tau_{y^2}=\tau_{x^3}=\tau_{y^3}=\TT[x,y]$;

\item$\lambda_{xy}=\{x^{2},y^2\}$,\  $\lambda_{y^2}=\lambda_{x^3}=\lambda_{y^3}=\emptyset$.

\end{itemize}

Every marked set over $\mathcal J$ has the shape

$$\mathcal F_{a,b}= \{f_{xy}=xy-ax^2-by^2, f_{y^2}=y^2,f_{x^3}=x^3,f_{y^3}=y^3\},  \   a,b\in A.$$ 

The RS $\mathcal J$  is not noetherian since reducing  $x^2y$ with respect to the marked  set $\mathcal F_{0,0}$ we may obtain the same infinite sequence of steps  described in Example \ref{EXORSaltro}.

However, in this case for  every polynomial there are also reductions   leading  to a $J$-remainder, since $\mathcal J$  has  for instance the  noetherian substructure $\mathcal J'$ given by   $\tau_{xy}'=\{1, x\}$  and  $\tau_{y^2}'=\tau_{x^3}'=\tau_{y^3}'=\TT[x,y]$. 

In fact, for every $\mathcal F_{a,b}$ the reduction procedure  $\rightarrow_{\mathcal F_{a,b} \mathcal J'}^+ $ returns after the only possible first  step of reduction the $J$-remainder of $xy$ (it is $ax^2+by^2$) and of every monomial $v$   that is multiple of either $y^2$ or $x^3$ (it is $0$).   Moreover, the only possible sequences of reduction of $x^2y$ are 

$$x^2y \rightarrow_{\mathcal F_{a,b} \mathcal J'}^+ ax^3+bxy^2 \rightarrow_{\mathcal F_{a,b} \mathcal J'}^+ ax^3 \rightarrow_{\mathcal F_{a,b} \mathcal J'}^+0\downarrow$$ $$  x^2y \rightarrow_{\mathcal F_{a,b} \mathcal J'}^+ ax^3+bxy^2 \rightarrow_{\mathcal F_{a,b} \mathcal J'}^+  b xy^2 \rightarrow_{\mathcal F_{a,b} \mathcal J'}^+  0\downarrow. $$

\end{example}

\begin{lemma}\label{implicazionibanali}  Let $\mathcal J$ be a RS. Then
\begin{enumerate}[(i)]
\item \label{implicazionibanali1} if $\mathcal J$  is noetherian, then it is also weakly noetherian;
\item   \label{implicazionibanali2} if $\mathcal J$ has disjoint cones, then also the converse  of (i) holds true;
\item \label{implicazionibanali3} if $\mathcal J'\subseteq \mathcal J$ and $\mathcal J$ is noetherian, then also $\mathcal J'$ is noetherian;
\item \label{implicazionibanali4}   if $\mathcal J'\subseteq \mathcal J$ and $\mathcal J'$ is weakly  noetherian, then $\mathcal J$ is weakly  noetherian.
\end{enumerate}
\end{lemma}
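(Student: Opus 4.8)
The plan is to prove the four items of Lemma~\ref{implicazionibanali} in turn, each being a short consequence of the definitions of noetherianity and weak noetherianity together with Remark~\ref{codefinite}. For item~\eqref{implicazionibanali1}, I would observe that a noetherian RS is trivially weakly noetherian because $\mathcal J$ is a substructure of itself (take $\tau'=\tau$ in the definition of substructure): a noetherian RS thus has a noetherian substructure, namely itself, which is exactly the definition of weak noetherianity. This is immediate and requires no calculation.

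For item~\eqref{implicazionibanali3}, suppose $\mathcal J'\subseteq\mathcal J$ with $\mathcal J$ noetherian. The key fact is the implication recorded in Remark~\ref{codefinite}, namely that $g\xrightarrow{\mathcal F\mathcal J'}h$ implies $g\xrightarrow{\mathcal F\mathcal J}h$, since $\tau'_\alpha\subseteq\tau_\alpha$ means every base step available in $\mathcal J'$ is also available in $\mathcal J$. Because the marked sets over $\mathcal J$ and $\mathcal J'$ coincide, any infinite reduction sequence for some $\mathcal F$ under $\xrightarrow{\mathcal F\mathcal J'}$ would also be an infinite reduction sequence under $\xrightarrow{\mathcal F\mathcal J}$, contradicting the noetherianity of $\mathcal J$. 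Hence $\mathcal J'$ is noetherian. Item~\eqref{implicazionibanali4} is then almost formal: if $\mathcal J'\subseteq\mathcal J$ is weakly noetherian, it possesses a noetherian substructure $\mathcal J''\subseteq\mathcal J'$; since the substructure relation is clearly transitive (the containments $\tau''_\alpha\subseteq\tau'_\alpha\subseteq\tau_\alpha$ compose), $\mathcal J''\subseteq\mathcal J$ is a noetherian substructure of $\mathcal J$, so $\mathcal J$ is weakly noetherian.

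The only item requiring genuine argument is~\eqref{implicazionibanali2}, the converse under the disjoint-cones hypothesis: assuming $\mathcal J$ has disjoint cones and is weakly noetherian, I must show $\mathcal J$ is actually noetherian. By weak noetherianity there is a noetherian substructure $\mathcal J'=(M,\lambda,\tau')$ with $\tau'_\alpha\subseteq\tau_\alpha$ and $\bigcup_{x^\alpha\in M}\cone'(x^\alpha)=J$. The heart of the matter is that disjointness of the cones of $\mathcal J$ forces the cones of the substructure to already cover $J$ without overlap, and I expect this to pin down $\tau'_\alpha=\tau_\alpha$ for every $x^\alpha\in M$, so that $\mathcal J'=\mathcal J$ and noetherianity of $\mathcal J'$ transfers directly to $\mathcal J$. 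Concretely, take any $x^{\alpha+\eta}\in\cone(x^\alpha)$ with $x^\eta\in\tau_\alpha$; since $\cone'(x^\bullet)$ covers $J$, this term lies in some $\cone'(x^{\alpha'})\subseteq\cone(x^{\alpha'})$, but disjointness of the $\mathcal J$-cones forces $x^{\alpha'}=x^\alpha$, whence $x^\eta\in\tau'_\alpha$; thus $\tau_\alpha\subseteq\tau'_\alpha$ and equality holds.

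The step I anticipate as the main obstacle is exactly this last reduction $\tau'_\alpha=\tau_\alpha$, because it is the only place where the disjointness hypothesis does real work and where one must argue that a covering substructure cannot be strictly smaller. The subtlety is to rule out the a~priori possibility that shrinking one multiplier set $\tau_\alpha$ could be compensated by another cone; disjointness is precisely what forbids this, since each element of $J$ belongs to a unique cone of $\mathcal J$ and therefore must be supplied by that same cone in the substructure. Once $\mathcal J'=\mathcal J$ is established, the conclusion is immediate. I would take care to state the transitivity of $\subseteq$ explicitly, as it is used silently in item~\eqref{implicazionibanali4}, and to invoke Remark~\ref{codefinite} by name for the monotonicity of reductions under substructures in item~\eqref{implicazionibanali3}.
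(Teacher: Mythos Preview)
Your proposal is correct and follows essentially the same approach as the paper. The paper's proof is extremely terse, declaring all four items ``trivial consequences of the definitions'' and only remarking for \eqref{implicazionibanali2} that ``a reduction structure with disjoint cones has no proper substructures''; your detailed argument for \eqref{implicazionibanali2} is precisely a proof of this fact (showing $\tau'_\alpha=\tau_\alpha$ via the covering condition and disjointness), and your treatments of \eqref{implicazionibanali1}, \eqref{implicazionibanali3}, \eqref{implicazionibanali4} simply unpack what the paper leaves implicit.
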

\begin{proof} 
All these properties are  trivial consequences of the definitions. We only observe for \eqref{implicazionibanali2} that a RS with disjoint cones has no proper substructures.
\end{proof}

In order to find some effective way  to check the noetherianity of a RS, we now exploit arguments and results concerning the termination of algorithms based on rewriting rules,  that have been developed mainly in the  computer science context. They state  a closed relation between the noetherianity and the presence of a suitable  well-founded order. 

We recall  that an order  $\Prec$ on a set $W$ is called  \emph{well-founded} if each nonempty subset of $W$ contains minimal elements.

\begin{definition}\label{ORS}
We say that a RS  $\mathcal J$ is \emph{ordered}
 if there is a well-founded order $\Succ $ on   $  \mathcal T $ s.t.
 $$ \forall \   x^\alpha \in M,  \ x^\gamma \in \lambda_\alpha,  \ x^\eta \in \tau_\alpha  \hbox{  it holds   } x^{\alpha+\eta} \Succ x^{\gamma+\eta}.$$
\end{definition}

All the RSs coherent with a term order $\prec$ are obviously ordered. However there are ordered RSs that are not coherent with a term order; an easy example is the following.

\begin{example}\label{EXORS}
Let us consider  the RS $\mathcal J$ given by 
\begin{itemize}
\item $M:=\{x^3, xy,x y^2,y^3\}$;
\item $\tau_{x^3}=\tau_{xy}=\tau_{xy^2}:=\TT[x]$,
$\tau_{y^3}:=\TT[x,y]$;
\item $\lambda_{x^3}:=\lambda_{xy^2}:=\lambda_{y^3}:=\emptyset$,
$\lambda_{xy}:=\{x^{2},y^2\}$.
\end{itemize}
We prove  $\mathcal J$ to be ordered, by considering the well-founded order $<$ defined by  
$m_1\Succ  m_2$ if and only if $m_1=x^ay$ and either $m_2=x^{a+1}$ or $m_2=x^{a-1}y^2$ for some positive integer $a$. Of course there is no term ordering $\succ$ such that both $xy\succ x^2$ and $xy\succ y^2$.
\end{example}

\begin{example} \label{strano2}  The RS of Example \ref{strano} is ordered by the  well founded order $\Prec$ that we obtain refining the one given by the degree in the following way 
$$\forall \mathbf u, \mathbf v \in \mathcal T[x,y]_d: \quad  \mathbf u \Prec \mathbf v \Longleftrightarrow   \mathbf u \notin \{ x^d,y^d\}, \ \mathbf v \in \{ x^d,y^d\}.  $$
 
\end{example}
\medskip

We would like  to connect this definition of ordered RS to the rewriting rules on it. To this aim,  we  adapt to our situation a more general construction presented by Dershowitz and  Manna in \cite{DM} and extend any order $\Prec $ on  the set of monomials $\mathcal T$ to an order $\PPrec$  on the set of   polynomials $\mathcal P$,  by setting for every pair  $f,g \in \mathcal  P$, $f \SSucc  g$ if and only if
  
$$\supp(f)\neq \supp(g) \hbox{ and } \forall   m \in \supp(g)\setminus \supp(f)  \,   \exists m' \in \supp(f)\hbox{ s.t. } m'\Succ m .$$

\begin{theorem}\label{wellfounded} \cite{DM}  $(\mathcal T, \Prec)$ is well-founded if and only if $(\mathcal P , \PPrec)$ is.
\end{theorem}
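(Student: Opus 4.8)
The plan is to prove the two implications separately; the reverse direction (from $W$ to $\mathcal M(W)$) is routine, while the forward direction is the substantial one. For the easy direction suppose $(\mathcal M(W),\PPrec)$ is well founded and consider the map $w\mapsto\{w\}$ sending an element to the corresponding singleton multiset. Directly from the definition of $\PPrec$ this map both preserves and reflects the order: $\{a\}\PPrec\{b\}$ holds exactly when $a\Prec b$, since the only way to produce a multiset strictly below $\{b\}$ is to delete the copy of $b$ and insert elements each $\Prec b$. Hence if $S\subseteq W$ were nonempty with no minimal element, then $\{\{s\}:s\in S\}$ would be a nonempty subset of $\mathcal M(W)$ with no minimal element, contradicting well-foundedness of $\mathcal M(W)$; equivalently, a strictly descending chain in $W$ lifts to one in $\mathcal M(W)$.

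For the hard direction assume $(W,\Prec)$ is well founded. I would argue through the \emph{well-founded part} of $\PPrec$: call a multiset \emph{accessible} if every multiset strictly $\PPrec$-below it is accessible. Well-foundedness of $\PPrec$ is equivalent to every finite multiset being accessible, so, since each finite multiset is obtained from the empty multiset $\emptyset$ (which is vacuously accessible) by adjoining elements one at a time, it suffices to prove the closure property: if $M$ is accessible and $x\in W$, then $M\cup\{x\}$ is accessible. This I would establish by a double induction, an outer well-founded induction on $x$ along $\Prec$ (legitimate since $W$ is well founded) and, for fixed $x$, an inner $\PPrec$-induction on $M$ within the well-founded part (legitimate since $M$ is accessible). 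Under these hypotheses I may assume that $N\cup\{x'\}$ is accessible whenever $x'\Prec x$ and $N$ is accessible, and that $M'\cup\{x\}$ is accessible whenever $M'\PPrec M$.

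To conclude that $M\cup\{x\}$ is accessible I must verify that every $B\PPrec M\cup\{x\}$ is accessible. Writing $B=((M\cup\{x\})\setminus Y)\cup Z$ with $Y$ nonempty and each $z\in Z$ lying $\Prec$ some element of $Y$, the decisive case split is whether $x\in Y$.
\begin{center}
If $x\notin Y$ then $Y\subseteq M$ and $B=M'\cup\{x\}$ with $M'=(M\setminus Y)\cup Z\PPrec M$ accessible, so the inner hypothesis applies.
\end{center}
If instead $x\in Y$, I remove the distinguished copy of $x$ from $Y$ and split $Z$ according to a chosen witness: the part $N$ assembled from $M$ together with those elements of $Z$ witnessed by $Y\setminus\{x\}$ is either $M$ itself or lies $\PPrec M$, hence is accessible, while the remaining elements of $Z$ are each $\Prec x$ and are reabsorbed into $N$ one at a time by the outer hypothesis. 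In both cases $B$ is accessible, and the double induction closes.

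The main obstacle is precisely this last case. The elements of $Z$ introduced as replacements for the copy of $x$ are not controlled by the inner induction, since adjoining them need not leave us $\PPrec$-below $M$; it is exactly here that the outer well-founded induction on $\Prec$ is indispensable, because each such element is strictly $\Prec x$ and so may be added while preserving accessibility. The delicate bookkeeping is arranging the deletions, insertions, and the assignment of witnesses in the multiset $Z$ so that the inner and outer hypotheses apply to the correct sub-multisets; once this is set up cleanly the argument is forced.
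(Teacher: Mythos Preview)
The paper does not supply a proof of this theorem at all: it is stated with attribution to Dershowitz--Manna \cite{DM} and used as a black box, with no proof environment following the statement. There is therefore nothing in the paper to compare your argument against.

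That said, your proof is correct and is essentially the standard modern presentation of the Dershowitz--Manna result (close to Nipkow's accessibility formulation). The easy direction via the singleton embedding is fine. For the hard direction, your double induction---outer well-founded induction on $x$ along $\Prec$, inner induction on $M$ within the accessible part---is the right structure, and your case split on whether the distinguished copy of $x$ lies in the removed set $Y$ is exactly where the two induction hypotheses separate. One small point worth making explicit: in the case $x\in Y$ with $Y'=Y\setminus\{x\}$ possibly empty, you should note that then $Z'=\emptyset$ and $N=M$, so $N$ is accessible directly rather than via $N\PPrec M$; you gesture at this with ``either $M$ itself or lies $\PPrec M$'', which is enough. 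The iterated use of the outer hypothesis to absorb the elements of $Z''$ one at a time is sound because accessibility is preserved at each step.
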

It is quite obvious that for every marked set $\mathcal F$ on a RS $\mathcal J$ ordered by $\Prec $,  
$f\rightarrow_{\mathcal F \mathcal J}g$
implies $\supp(f) \SSucc \supp(g)$.  We can then reformulate in our framework a well know results by Z. Manna and S. Ness concerning the termination of programs (\cite{MN},\cite{DM}).

\begin{theorem}\label{implicazioni} Let $\mathcal J$ be a RS. Then
  $$\mathcal J \hbox{ is ordered  } \ \Longleftrightarrow  \mathcal J  \hbox{ is   noetherian}.$$
\end{theorem}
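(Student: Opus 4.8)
The plan is to prove the theorem as a biconditional, treating each direction independently. The forward direction ($\mathcal J$ ordered $\Rightarrow$ $\mathcal J$ noetherian) will rely on the machinery already assembled just before the statement, while the backward direction ($\mathcal J$ noetherian $\Rightarrow$ $\mathcal J$ ordered) will be the genuinely constructive part of the argument.

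\textbf{Forward direction.} First I would assume $\mathcal J$ is ordered by an ordering function $\varphi\colon\TT\to W$ into a well founded set $(W,\Succ)$. The text immediately before the statement already records the key observation: for any marked set $\mathcal F$ on $\mathcal J$, a single base step $f\xrightarrow{\mathcal F\mathcal J}g$ satisfies $\overline{\varphi}(f)\SSucc\overline{\varphi}(g)$. I would verify this carefully: a base step replaces $c\,x^\eta f_\alpha$, which removes the occurrence of $x^{\alpha+\eta}$ from the support and can only introduce (or cancel) terms $x^{\gamma+\eta}$ with $x^\gamma\in\lambda_\alpha$; the defining inequality $\varphi(x^{\alpha+\eta})\Succ\varphi(x^{\gamma+\eta})$ then shows that, in the multiset, one element is replaced by strictly smaller ones, which is exactly the relation $\PPrec$. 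Hence each base step strictly decreases $\overline{\varphi}$ with respect to $\PPrec$. Since $(W,\Succ)$ is well founded, Theorem~\ref{wellfounded} (Dershowitz--Manna) tells us $(\mathcal M(W),\PPrec)$ is well founded as well, so no infinite descending chain of multisets exists. Therefore every sequence of base steps on any $g$ must terminate, which is precisely noetherianity of $\xrightarrow{\mathcal F\mathcal J}$; as $\mathcal F$ was arbitrary, $\mathcal J$ is noetherian.

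\textbf{Backward direction, and the main obstacle.} The harder part is constructing an ordering function from noetherianity alone. The natural strategy is to let $W$ be $\TT$ itself (or a suitable quotient) and to define $\Succ$ as the transitive closure of the rewriting relation restricted to the relevant single-term reductions — that is, to set $x^{\alpha+\eta}\Succ x^{\gamma+\eta}$ whenever $x^\alpha\in M$, $x^\gamma\in\lambda_\alpha$, $x^\eta\in\tau_\alpha$, and then take the transitive closure, with $\varphi=\mathrm{Id}_\TT$. The content to verify is that this relation is \emph{well founded}, i.e.\ admits no infinite descending chain. Here is where noetherianity must be invoked: an infinite descending chain in $\Succ$ would, by choosing a generic marked set $\mathcal F$ whose tails realize all the coefficients (so that the terms $x^{\gamma+\eta}$ genuinely appear and do not cancel), produce an infinite sequence of base steps, contradicting that $\xrightarrow{\mathcal F\mathcal J}$ is noetherian. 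The delicate point — and the step I expect to be the main obstacle — is ensuring that $\Succ$ is a genuine (strict partial, indeed well founded) order, in particular that it is \emph{irreflexive}/antisymmetric: one must rule out cycles $t\Succ t_1\Succ\cdots\Succ t$, since the definition of ordered reduction structure requires $\varphi$ to land in a well founded \emph{ordered} set. A cycle would correspond to a reduction sequence returning to a previously seen term, and one has to argue that this too would violate noetherianity (a cycle generates an infinite non-terminating sequence). I would therefore prove directly that the transitive closure of the above elementary relation contains no cycle, deducing this from the absence of infinite reduction sequences, and then observe that a cycle-free relation whose transitive closure has no infinite descending chain is exactly a well founded order, completing the construction of $(W,\Succ)$ and $\varphi$ and hence showing $\mathcal J$ is ordered.
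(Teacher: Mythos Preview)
Your proposal is correct and follows essentially the same route as the paper. The paper does not actually give a proof of this theorem: it records the forward direction (each base step makes $\overline{\varphi}$ strictly $\PPrec$-smaller, so Dershowitz--Manna yields termination) and for the biconditional simply cites Manna--Ness. Your forward direction is exactly this, and your backward direction is precisely the Manna--Ness construction specialised to the present setting: take $W=\TT$, $\varphi=\mathrm{Id}_\TT$, and let $\Succ$ be the transitive closure of the elementary relation $x^{\alpha+\eta}\Succ x^{\gamma+\eta}$; an infinite descending chain or a cycle is converted into a non-terminating reduction of some marked set, contradicting noetherianity.

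One small remark on the point you flag as the main obstacle. Your instinct to pass to a ``generic'' marked set (coefficients $C_{\alpha,\gamma}$ taken as independent indeterminates) is exactly right, and it is the same device the paper uses a few lines later in the proof of Theorem~\ref{REEVESTURMFELS}: the coefficient of the tracked term $t_k$ after $k$ steps has top-degree part $\prod_{i<k}C_{\alpha_i,\gamma_i}\neq 0$, so no cancellation occurs and the reduction never halts. This implicitly uses that ``$\mathcal J$ noetherian'' is quantified over all coefficient rings, which is how the paper treats it (cf.\ the Reeves--Sturmfels proof and Section~\ref{sez:functoriality}); since being ordered is a property of $\mathcal J$ alone, independent of $A$, this is the intended reading.
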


 Due to the above result, {\bf in the following we consider noetherian and ordered as synonyms  for what concerns the RSs.} Therefore, to every noetherian RS 
we associate a well founded ordering $\Prec$ on  $\TT$   and its extension $\PPrec$ on $\PP$.
\medskip

We conclude this section with a reformulation of a well known result by Reeves and Sturmfels.
They assume to have a finite set $\mathcal F$ of marked polynomials and they say that $\mathcal F$ is marked {\em coherently} if there is a term ordering $\prec$ such that for each $f=Ht(f)-\sum_{j=1}^s c_j t_j\in \mathcal F, c_j\neq 0$ it holds  $t_j\prec Ht(f)$ for each $j$
and they prove that

\begin{theorem}[Reeves-Sturmfels, \cite{RS}[Theorem 1.1]]\label{REEVESTURMFELS}
A set $\mathcal F\subset{\mathcal P}$ of marked polynomials is  marked coherently if and only if the reduction relatation modulo $\mathcal F$ is Noetherian, i.e. every sequence of reductions modulo $\mathcal F$ terminates.
\end{theorem}

The core of their argument is the following lemma
\begin{lemma}[Reeves-Sturmfels, \cite{RS}[Lemma 2.1]]\label{REEVESTURMFELSLEMMA}
 Let $\mathcal F=\{f_1,\ldots,f_k\}\subset{\mathcal P}$ be marked incoherently. Then there exists a reduction sequence  modulo $\mathcal F$ which does not terminate.
\end{lemma}

Since Buchberger reduction terminates under any term ordering, the proof of the lemma is all one needs for proving the theorem for finite sets. The extension to the case of infinite sets requires 
either comminatorial tools (Helly's Theorem) or the Compacteness Theorem of First-Order Logic\footnote{{\em One writes down an infinite set $S$ of first-order sentences that asserts that ``$\prec$'' 
is an admissible term order extending the order relations between terms specified by $\mathcal F$. 
By the hypothesis of Lemma~\ref{REEVESTURMFELSLEMMA}, every finite subset of $S$ is coherent, 
hence $S$ is coherent \cite{RS}[ppg.276-7].} }.

In their proof of lemma~\ref{REEVESTURMFELSLEMMA}, they represent each marked polynomial  
$f_i := Ht(f_i)-\sum_{j=1}^{s_i} c_j t_j$ as
$$f_i := x^{\alpha_i}-\sum_{j=1}^{s_i} c_j x^{\alpha_i+\gamma_{ij}}$$
via suitable distinct non-zero vectors 
$$\gamma_{ij}=\left(\gamma_{ij1},\ldots, \gamma_{ijn}\right)\in{\mathbb{Z}}$$ and they suppose that the marking $Ht(f_i) = x^{\alpha_i}$ is incoherent,i.e. there is no admissible term order $\prec$ such that $Ht(f_i) = x^{\alpha_i} ={\bf T}_\prec(f_i)$ for all $i, 1\leq i\leq k$.

This implies, as a direct consequence of Linear Programming Duality, the existence of a non-zero, non-negative 
integer vector
$$y=\left(\bar y_1,\ldots, \bar y_n,y_{11},\ldots,y_{1{s_1}},\ldots,y_{ij},\ldots,y_{k{s_k}}\right)$$
which satisfies
$$\left(\bar y_1,\ldots, \bar y_n\right)^T=\sum_{i=1}^k\sum_{j=1}^{s_i} y_{ij}\gamma_{ij}^T.$$

If, there is a $y$ with $\left(\bar y_1,\ldots, \bar y_n\right)^T=0$ they choose any such solution which further minimalizes
$$N:=\sum_{i=1}^k\sum_{j=1}^{s_i} y_{ij};$$ if all solutions are such that
$\left(\bar y_1,\ldots, \bar y_n\right)^T=\sum_{i=1}^k\sum_{j=1}^{s_i} y_{ij}\gamma_{ij}^T>0$ they choose, among all solutions, one which minimalizes
$N:=\sum_{i=1}^k\sum_{j=1}^{s_i} y_{ij}$.

Then they consider the monomial
$$x^{\beta} =x^{\beta^{(0)}}= \prod_{i=1}^k Ht(f_i)^{\sum_{j=1}^{s_i} y_{ij}}$$
and construct proper non negative integer vectors $\beta^{(r)}, 0\leq r\leq N$ such that $x^\beta\mid x^{\beta^{(r)}}$ for each $r$.

Combinatorial arguments allow them to prove that for each $r,1\leq r\leq N$ there is an index $i_r\in\{1,\ldots,h\}$ and a term $t_r\in\TT$ such that
$x^{\beta^{(r-1)}}=t_r Ht(f_{i_r})$ so that denoting
$g_0:=x^{\beta}$ and, inductively
$g_i:=g_{i-1}-c(g_{i-1},\beta^{(r-1)})t_r f_{i_r}$

A further combinatorial argument grants them that for each index $c(g_{i-1},\beta^{(r-1)})\neq 0$ so that 
$g_N=cx^{\beta^{(N)}}+g$ with $c\neq 0$.

Then they need to consider the two different cases.
If $\beta^{(N)}=\beta$ then we have the infinite reduction
$$x^{\beta}\rightarrow cx^{\beta}+g\rightarrow c^2x^{\beta}+(1+c)g\rightarrow\cdots\rightarrow c^nx^{\beta}+\left(\sum_{i=1}^n c^{i-1}\right)g\ldots$$
If the non-negative vector $\bar\beta:=\beta^{(N)}-\beta$ is non zero,
then we have the infinite reduction
$$x^{\beta}\rightarrow cx^{\beta^{(N)}}+g\rightarrow c^2x^{2\beta^{(N)}-\beta}+(1+cx^{\bar\beta})g\rightarrow\cdots\rightarrow 
c^{n+1}x^{\beta^{(N)}+n\bar\beta}+\left(\sum_{i=0}^{n}c^{i}x^{i\bar\beta}\right)g\ldots$$

In our notation this non Noetherian RS can be described as $\rightarrow_{\mathcal F \mathcal J}^+$
with $\mathcal{J}=(M, \lambda, \tau)$, 
$$M=\{x^{\alpha_i}, 1\leq i\leq k\}, \lambda_{\alpha_i}:=\{x^{\alpha_i+\gamma_{ij}}, 1\leq j\leq s_i\}, \tau_{\alpha_i}=\TT.$$

  In the following example we  present a RS
which  is {\em not} coherent with a term order while noetherian.   

By any similar RS, we can obtain examples of  weakly noetherian RSs with maximal cones, though non-coherent with a term order. Indeed, if   $\mathcal J'=(M',\lambda' , \tau' )$   is a noetherian, then  $\mathcal J =(M=M',  \lambda =\lambda' , \{\tau_\alpha=\TT\})$, of which  $ \mathcal J' $ is a substructure, is  weakly noetherian and has maximal cones.

\begin{example}\label{perReevesSturm}
In $A[x,y]$ we consider
\begin{itemize} 
\item $M=\{xy,x^3,y^3, xy^2,x^2y^2\}$; 
\item  $\tau_{xy}=\tau_{x^3}=\TT[x]$, \  \ $\tau_{y^3}=\tau_{xy^2}=\TT[y]$,  \  \ $\tau_{x^2y^2}=\TT[x,y]$;
\item $\lambda_{xy}=\{x^2,y^2\},$  \  \ $\lambda_{x^3}=\lambda_{y^3}=\lambda_{xy^2}=\lambda_{x^2y^2}=\emptyset.$
\end{itemize}

Let us consider the marked set $\mathcal{F}=\{f_{xy},f_{x^3},f_{y^3},f_{xy^2}\}$; while the marked polynomials $f_{x^3},f_{y^3},f_{xy^2}$  are necessarily  monomials, for $xy$ we have to fix a polynomial with the shape $xy-ax^2-by^2, a,b\in K$; the reduction we are discussing assume $a\neq0\neq b$ but is a trivial task to check that our  claim  apply also when either $a=0$ and/or $b=0$.
The RS $\mathcal{J}=(M, \lambda, \tau)$ is trivially {\em non}-coherent with a term order but is noetherian.

In fact: 
\begin{itemize}
\item if $\upsilon\in\cone(x^3)\cup\cone(y^3)\cup\cone(xy^2)\cup\cone(x^2y^2)$, trivially $\upsilon\rightarrow 0\downarrow$;
\item $xy\rightarrow_
{\mathcal F \mathcal J} ax^2+by^2\downarrow \in\langle \cN(J)\rangle$
\item $x^2y=x(xy)\rightarrow_{\mathcal F \mathcal J} x(ax^2+by^2)=ax^3+bxy^2\rightarrow_{\mathcal F \mathcal J}^+ 0\downarrow$
\item $x^{i+3}y=x^{i+2}(xy)\rightarrow_{\mathcal F \mathcal J} x^{i+2}(ax^2+by^2)=ax^{i+1}\cdot x^{3}+by\cdot x^{i}\cdot x^2y^2\rightarrow_{\mathcal F \mathcal J}^+ 0\downarrow, i\geq 0$.
\end{itemize}

Note that the example does not contradict Reeves-Sturmfels Theorem for the simple reason that the cones are {\em not} maximal.
\end{example}

\section{Noetherianity II: lower representations of polynomials}\label{Noeth2Sec}

In this section we relate the reduction of  a polynomial $g$ by a given marked set $\mathcal F$ and its linear/polynomial representation in terms of $\tau \mathcal F$.  We recall that for a given marked set $\mathcal F$ over a RS  $\mathcal J=(M, \lambda, \tau)$, we denote by $\tau \mathcal F$ the set of polynomials $x^\gamma f_\alpha$ with $f_\alpha \in \mathcal F$ and $x^\gamma \in \tau_\alpha$,  and  by $\langle \tau \mathcal F\rangle$ the $A$-module  generated by $\tau \mathcal F$.  Moreover,   $J$ denotes the semigroup ideal generated by $M$ and  $\cN(J)$ the order ideal $\TT \setminus J$.

\begin{definition} \label{representation} Let $\mathcal F$ be a marked set over a RS $\mathcal J$ and let $g$ be any polynomial in $\langle \tau \mathcal F\rangle$. 

If $g=\sum_{i=1}^r c_i x^{\eta_i}f_{\alpha_i}$ with $c_i \in A$ and $x^{\eta_i}f_{\alpha_i}$ distinct elements of $ \tau \mathcal F$, we say that the writing $\sum_{i=1}^r c_i x^{\eta_i}f_{\alpha_i}$ is a \emph{representation of $g$   by} $  \tau \mathcal F $.

If,  moreover, $\mathcal J$ is noetherian with well founded ordering  $\Prec$ and  $x^\delta$ is any term, we say that  a representation   $g=\sum_{i=1}^r c_i x^{\eta_i}f_{\alpha_i}$   by $\tau \mathcal F$ is  a $x^\delta$-{\it{lower representation}}  ($x^\delta-\LR$  for short) and,  respectively, a  $x^\delta$-{\it{strictly lower representation}}   ($x^\delta-\SLR$  for short)  if, for every $i=1, \dots, r$,   it holds $x^{\eta_i+\alpha_i}\Preceq x^\delta$  and respectively $x^{\eta_i+\alpha_i}\Prec x^\delta$.
\end{definition}

We observe that, as an obvious consequence of the definition of reduction procedure, if $g \rightarrow_{\mathcal F \mathcal J}^+ h$, then $g-h$ has a representation by $\tau \mathcal F$ given by the steps of the reduction (summing up the coefficients of each  element of $\tau \mathcal F$ used more than once during the reduction).

\begin{proposition}\label{ScritturaDistinti1} 
Let $\mathcal F$ be a marked set over a weakly noetherian RS   $\mathcal J$ and let  $g \in \PP$. 
\begin{enumerate}[i)]
\item  \label{ScritturaDistinti1_i}  There exists a reduced  form $l$ of $g$ obtained by $\mathcal F$ and  $g-l$ has a representation by $\tau \mathcal F$.
\item \label{ScritturaDistinti1_ii}  If $\mathcal J$ has disjoint cones,   then   there is  only   one polynomial  $l $  (the {\em canonical form} of $g$)  with    $\supp ( l) \subset \cN(J)$ and $g-l \in \langle \tau \mathcal F\rangle $; moreover,  there is a unique  representation  of   $g-l$ by $\tau \mathcal F$. 

\item  \label{ScritturaDistinti1_iii} If   $\mathcal J$ is   noetherian (with well-founded order  $\Prec$) and,  for a reduced polynomial $l$,   $g-l$ has a representation $\sum_{i=1}^r c_i x^{\gamma_i}f_{\alpha_i}$ by $\tau \mathcal F$ with 
  all distinct heads $x^{\gamma_i+\alpha_i}$, then  $g \rightarrow_{\mathcal F \mathcal J}^+ l\downarrow$  and, for each $i$,
$x^{\gamma_i+\alpha_i}  \Preceq x^\delta$  for some  $x^\delta \in \supp (g)$.

Vice versa,  from
 $g \rightarrow_{\mathcal F \mathcal J}^+ l\downarrow$  one deduces that    $g-l$ has a representation $\sum_{i=1}^r c_i x^{\gamma_i}f_{\alpha_i}$  by $\tau \mathcal F$   with 
  all distinct heads  s.t. for each 
 $i$ it holds  $x^{\gamma_i+\alpha_i}  \Preceq x^\delta $ for some  $x^\delta \in \supp (g)$.
\item  \label{ScritturaDistinti1_iv} In the same hypotheses and setting of \ref{ScritturaDistinti1_iii}), if  $g$ is a term $x^\delta$, then $x^\delta-l$ has a $x^\delta-\LR$ by $\tau \mathcal F$. 
 
\end{enumerate}
\end{proposition}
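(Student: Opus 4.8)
**The plan is to prove the four parts in sequence, building each on the machinery established in the previous sections, with the representation bookkeeping being the recurring technical core.**

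Let me sketch my proof approach for Proposition~\ref{ScritturaDistinti1}.

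For part \ref{ScritturaDistinti1_i}, the plan is to invoke weak noetherianity directly. Since $\mathcal J$ is weakly noetherian, it has a noetherian substructure $\mathcal J'$, and by the remark following Definition~\ref{defnoetherina} every polynomial $g$ has a complete reduction $g \xrightarrow{\mathcal F \mathcal J'}_\star l$ with $\supp(l)\subset\cN(J)$. By Remark~\ref{codefinite}, this same reduction is valid as a $\mathcal J$-reduction. The representation of $g-l$ by $\tau\mathcal F$ then comes for free from the observation stated just before the proposition: each base step $g_i \to g_{i+1}$ subtracts a single $c\,x^\eta f_\alpha \in \tau\mathcal F$, so summing over the finitely many steps (collecting repeated reducers) yields $g-l = \sum c_i x^{\eta_i} f_{\alpha_i}$.

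For part \ref{ScritturaDistinti1_ii}, the disjoint-cones hypothesis is what forces uniqueness. The idea is that when cones are disjoint, each term $x^\gamma \in \supp(g)\cap J$ lies in exactly one cone $\cone(x^\alpha)$, so there is no choice of reducer at any stage: the reduction procedure is \emph{deterministic}. Here I would argue uniqueness of $l$ by a standard confluence-free argument: if $l, l'$ both satisfy $\supp\subset\cN(J)$ and $g-l, g-l' \in \langle\tau\mathcal F\rangle$, then $l-l' \in \langle\tau\mathcal F\rangle$ has support in $\cN(J)$; but any nonzero element of $\langle\tau\mathcal F\rangle$ must contain some head $x^{\eta+\alpha}\in J$ (the $\varphi$-maximal such head, using the ordering function, cannot cancel since by disjointness it is produced by a unique reducer), contradicting $\supp(l-l')\subset\cN(J)$. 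The uniqueness of the representation follows the same way: two representations of $g-l$ would differ by an element of $Syz$-type relations among the $x^\eta f_\alpha$, but disjointness of cones means distinct $x^{\eta_i}f_{\alpha_i}$ have distinct heads $x^{\eta_i+\alpha_i}$, so no cancellation of heads is possible and the representation coefficients are forced.

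For parts \ref{ScritturaDistinti1_iii} and \ref{ScritturaDistinti1_iv}, the core is translating between reductions and representations while tracking the ordering function $\varphi$; \textbf{this is where I expect the main obstacle.} For the forward direction of \ref{ScritturaDistinti1_iii}, the plan is to induct on the well-founded multiset order $\PPrec$ on $\overline\varphi(g)$ (justified by Theorem~\ref{wellfounded} and Remark~\ref{direttamente}): given a representation with all distinct heads, I would locate the $\varphi$-maximal head $x^{\gamma_j+\alpha_j}$, argue that since it is distinct from all others it cannot cancel, hence it lies in $\supp(g)$, perform the corresponding base step to pass to $g' := g - c_j x^{\gamma_j}f_{\alpha_j}$ whose multiset is strictly $\PPrec$-smaller, and recurse. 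The bound $\varphi(x^{\gamma_i+\alpha_i})\Preceq\varphi(x^\delta)$ for some $x^\delta\in\supp(g)$ emerges because every head appearing must be matched against a support term of $g$ that it reduces. The converse direction is the observation that each reduction step preserves the property $\overline\varphi(g)\SSucc\overline\varphi(h)$, and collecting heads (merging repeated reducers to keep them distinct) gives the required representation with the stated $\varphi$-bounds. Finally, part \ref{ScritturaDistinti1_iv} is a direct specialization: taking $g = x^\delta$, the set $\supp(g)=\{x^\delta\}$ is a singleton, so the bound $\varphi(x^{\gamma_i+\alpha_i})\Preceq\varphi(x^\delta)$ from \ref{ScritturaDistinti1_iii} is exactly the defining condition of an $x^\delta$-$\LR$. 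The delicate point throughout will be ensuring that ``distinct heads'' is preserved when merging reducers and that the $\varphi$-maximal head genuinely survives as a support term rather than being cancelled.
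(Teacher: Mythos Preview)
Your proposal is correct and follows essentially the same line as the paper's proof. The one noteworthy technical difference is in part \ref{ScritturaDistinti1_iii}: you propose inducting on the multiset $\overline\varphi(g)$ under $\PPrec$, whereas the paper inducts simply on the number $r$ of summands in the representation. The paper's choice is slightly cleaner: after locating the $\varphi$-maximal head $x^{\gamma_r+\alpha_r}$ and performing one base step, the representation of $g'-l$ has exactly $r-1$ summands with distinct heads, and since \emph{all} heads satisfy $\varphi(x^{\gamma_i+\alpha_i})\Preceq\varphi(x^{\gamma_r+\alpha_r})$ with $x^{\gamma_r+\alpha_r}\in\supp(g)$, the bound for every $i$ is witnessed by this single $x^\delta$ at once---no need to transfer bounds from $\supp(g')$ back to $\supp(g)$. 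Your multiset induction works too, but requires that extra transfer step (any $x^{\delta'}\in\supp(g')\setminus\supp(g)$ lies in the tail of $x^{\gamma_j}f_{\alpha_j}$ and hence has $\varphi(x^{\delta'})\Prec\varphi(x^{\gamma_j+\alpha_j})$, so you can replace it by $x^{\gamma_j+\alpha_j}\in\supp(g)$). Otherwise your treatment of parts \ref{ScritturaDistinti1_i}, \ref{ScritturaDistinti1_ii}, and \ref{ScritturaDistinti1_iv} matches the paper's.
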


\begin{proof}
i)  follows from the definition of $  \rightarrow_{\mathcal F \mathcal J}^+ $ and the weak noetherianity of $\mathcal J$.

In order to prove ii) we  observe that $\mathcal J$ is in fact noetherian, since a RS with disjoint cones has no proper substructures. 

 Consider two reduced polynomials $l,l'$  such that $g-l,g-l' \in \langle \tau \mathcal F \rangle$ and take  some representations $g-l=\sum_{i=1}^r c_i x^{\gamma_i}f_{\alpha_i}$ and     $g-l'=\sum_{i=1}^r d_i x^{\gamma_i}f_{\alpha_i}$ in $\tau \mathcal F$;
we may suppose that the indices of the two summations are the same, possibly adding some zeroes.

We have then $ l-l'=\sum_{i=1}^r (d_i-c_i) x^{\gamma_i}f_{\alpha_i}$ and we deduce that    $c_i=d_i$ for $i=1, \dots, r$.  If, in fact, this were not true, we could  choose a maximal element in the set $\{x^{\gamma_i+\alpha_i}, i=1,\dots, r, c_i-d_i\neq 0\}$: suppose it  is $x^{\gamma_1+\alpha_1}$. 
Then   $x^{\gamma_1+\alpha_1} $  appears in the support of  $\sum_{i=1}^r (d_i-c_i) x^{\gamma_i}f_{\alpha_i}$: indeed this term is different from $x^{\gamma_i+\alpha_i} $  for  $i=2, \dots, r$, since by hypothesis $\mathcal J$ has  disjoint cones, and it does not appear in the support of   $x^{\gamma_i}f_{\alpha_i} -x^{\gamma_i+\alpha_i} $
 for some $i=1, \dots r$, by maximality. We get then a contradiction since the support of $l-l'$ is contained in $\cN(J)$.  Then $c_i=d_i$ and $l=l'$.

In order to prove iii), we proceed by induction on the number  $r$ of the summands. \\
If $r=1$ then $g=l+c_1x^{\gamma_1}f_{\alpha_1}$, and $x^{\gamma_1+\alpha_1}$ necessarily appears in  $\supp(g)$, 
since it cannot coincide neither with a term in the support of $l$ nor with a term of $x^{\gamma_1}f_{\alpha_1}-x^{\gamma_1+\alpha_1}$.
We can get $l$ from $g$ via a base reduction step on the term   $x^{\gamma_1+\alpha_1}$ using $f_{\alpha_1}$. \\
Setting $x^\delta:= x^{\gamma_1+\alpha_1}$, we trivially have 
$x^{\gamma_1+\alpha_1} \Preceq   x^\delta\in\supp (g).$
\\
Moreover, each term  $x^\beta$ in the support of
$c_1x^{\gamma_1}f_{\alpha_1}$ satisfies 
$x^\beta\Preceq  x^\delta$ since each term
$x^{\gamma}\in\supp(f_{\alpha_1}\setminus\{x^{\alpha_1}\})$ satisfies
$ x^{\gamma_1+\gamma}\Prec  x^{\gamma_1+\alpha_1}\Preceq   x^\delta$.
\\
Suppose by inductive hypothesis that the assertion is true in the case in which we have   $r-1$ summands.

We can suppose that  $x^{\gamma_r+\alpha_r}$ is maximal in the set   $\{x^{\gamma_i+\alpha_i},\, i=1,...,r\}$ 
and so it is also maximal in   $\{x^\epsilon \ \vert \   x^\epsilon \in  \supp (x^{\gamma_i}  f_{\alpha_i}), i=1, \dots, r \}$.  

Then $x^{\gamma_r+\alpha_r}$ appears in the support of $ \sum_{i=1}^r c_i x^{\gamma_i}f_{\alpha_i}$
and so also in the support of  $g$ (remember that $\supp(l)\subset \cN(J)$). 
\\
We execute the first reduction step on   $g$ choosing exactly that term and setting   $g'=g-c_rx^{\gamma_r}f_{\alpha_r}$. 
\\
Setting $x^\delta:=x^ {\gamma_r+\alpha_r}$, we trivially have, for each  $i$,
$ x^{\gamma_i+\alpha_i}\Preceq 
 x^{\gamma_r+\alpha_r} =
 x^\delta\in\supp (g).$
\\
Thus we obtain   $g'-l=\sum_{i=1}^{r-1} c_i x^{\gamma_i}f_{\alpha_i}$ and we conclude by inductive hypothesis.
\\
The converse statement immediately follows from the fact that   $\mathcal J$ is ordered and also from the hypothesis.

iv) is  a consequence of iii) and of  Definition \ref{representation}.
\end{proof}

\begin{corollary} \label{sommasemplice}  Let  $\mathcal F$ be a marked set over a weakly noetherian RS  $\mathcal J$.  Then $$\langle\tau \mathcal F\rangle +\langle \cN(J)\rangle=\PP.$$ 

If, moreover,  $\mathcal J$ has disjoint cones,  then $$\langle\tau \mathcal F\rangle \oplus\langle \cN(J)\rangle=\PP.$$

 In particular,  take  $x^\eta\in \TT$, $g, l\in \PP$   s.t.   $\supp(l)\subseteq \cN(J)$ and $ x^\gamma\Preceq  x^\eta$, for every $x^\gamma \in \supp(g)$.  Then 
$$ g-l \in \langle \tau\mathcal F \rangle \iff g \rightarrow_{\mathcal F \mathcal J}^+ l\downarrow \iff  g-l \hbox{ has a }  x^\eta-\LR \hbox{ by }  \tau\mathcal F . $$
\end{corollary}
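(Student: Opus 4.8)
The plan is to derive all three assertions from Proposition~\ref{ScritturaDistinti1}: first the sum decomposition $\PP=\langle\tau\mathcal F\rangle+\langle\cN(J)\rangle$, then its promotion to a direct sum under disjoint cones, and finally the chain of equivalences, which will follow almost formally once the direct sum is available.

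For the first equality I would take an arbitrary $g\in\PP$ and apply part~i) of Proposition~\ref{ScritturaDistinti1}: weak noetherianity yields a reduced form $l$ of $g$ with $g-l\in\langle\tau\mathcal F\rangle$. As $\supp(l)\subseteq\cN(J)$ gives $l\in\langle\cN(J)\rangle$, the identity $g=(g-l)+l$ exhibits $g$ in the asserted sum, and arbitrariness of $g$ yields $\PP=\langle\tau\mathcal F\rangle+\langle\cN(J)\rangle$.

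For the direct sum I would show $\langle\tau\mathcal F\rangle\cap\langle\cN(J)\rangle=\{0\}$ using the uniqueness in part~ii). Any $h$ in the intersection has $\supp(h)\subseteq\cN(J)$ and $h\in\langle\tau\mathcal F\rangle$, so both $l=0$ and $l=h$ are reduced polynomials with $h-l\in\langle\tau\mathcal F\rangle$; uniqueness of the canonical form of $h$ then forces $h=0$.

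For the three equivalences, now under the disjoint--cones hypothesis, I would close a cycle among the conditions. From $g\xrightarrow{\mathcal F \mathcal J}_\star l$ the forward direction of part~iii) provides a representation of $g-l$ by $\tau\mathcal F$ with distinct heads satisfying $\varphi(x^{\gamma_i+\alpha_i})\Preceq\varphi(x^\delta)$ for some $x^\delta\in\supp(g)$; the assumption $\varphi(x^\gamma)\Preceq\varphi(x^\eta)$ on every $x^\gamma\in\supp(g)$ then sharpens this to $\varphi(x^{\gamma_i+\alpha_i})\Preceq\varphi(x^\eta)$, which is exactly the $\varphi(x^\eta)-\LR$ condition. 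That a $\varphi(x^\eta)-\LR$ yields $g-l\in\langle\tau\mathcal F\rangle$ is immediate, being in particular a representation by $\tau\mathcal F$. The one substantial implication is $g-l\in\langle\tau\mathcal F\rangle\Rightarrow g\xrightarrow{\mathcal F \mathcal J}_\star l$, where the direct sum does the work: part~i) furnishes a reduced $l'$ with $g\xrightarrow{\mathcal F \mathcal J}_\star l'$ and $g-l'\in\langle\tau\mathcal F\rangle$, so $l-l'=(g-l')-(g-l)\in\langle\tau\mathcal F\rangle\cap\langle\cN(J)\rangle=\{0\}$, giving $l=l'$ and hence $g\xrightarrow{\mathcal F \mathcal J}_\star l$. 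I expect this last step to be the main obstacle, as it is precisely where one excludes distinct reduced forms of $g$ --- a confluence-type conclusion that fails in general once the cones overlap.
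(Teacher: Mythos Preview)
Your proof is correct and follows essentially the same route as the paper: all three assertions are read off Proposition~\ref{ScritturaDistinti1}, with part~i) giving the sum, part~ii) giving directness, and part~iii) controlling the heads to yield the $\varphi(x^\eta)$--$\LR$. The only minor divergence is in the implication $g-l\in\langle\tau\mathcal F\rangle\Rightarrow g\xrightarrow{\mathcal F\mathcal J}_\star l$: the paper argues directly, using part~ii) to obtain the unique representation of $g-l$ (whose heads are automatically distinct under disjoint cones) and then the first direction of part~iii) to conclude simultaneously that $g\xrightarrow{\mathcal F\mathcal J}_\star l$ and that the representation is a $\varphi(x^\eta)$--$\LR$; you instead produce an auxiliary reduced form $l'$ via part~i) and identify $l=l'$ through the already established direct sum. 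Both are short and equivalent in strength; the paper's variant has the slight economy of extracting the reduction and the $\LR$ bound from a single invocation of~iii), while yours makes the role of the direct sum more explicit. (A terminological quibble: what you call the ``forward direction'' of~iii) is the ``vice versa'' clause in the paper's statement.)
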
 
\begin{proof}
The first assertion comes from Proposition  \ref{ScritturaDistinti1}. Indeed,   $\forall g \in \PP$,   from $g=\sum_{i=1}^r c_i x^{\gamma_i}f_{\alpha_i}+ l$ with  $x^{\gamma_i}f_{\alpha_i}\in \tau\mathcal F$  and  $\supp ( l) \subset \cN(J)$ we deduce $g\in  \langle \tau \mathcal F \rangle +\langle \cN(J)\rangle$.  So $\langle\tau \mathcal F\rangle +\langle \cN(J)\rangle\supseteq \PP$. The other implication is obvious.

For the second assertion it is then sufficient to prove that $\langle\tau \mathcal F\rangle \cap  \langle \cN(J)\rangle=0$ and this comes from Proposition  \ref{ScritturaDistinti1}  {\it  \ref{ScritturaDistinti1_ii})}.

Now we prove the last assertion.  If $g-l \in \langle \tau\mathcal F \rangle $, by  \ref{ScritturaDistinti1}  {\it  \ref{ScritturaDistinti1_ii})}, then $g-l$ has a unique representation  $\sum_{i=1}^r c_i x^{\gamma_i}f_{\alpha_i}$  by $\tau \mathcal F$; as $\mathcal J$ has disjoint cones, the heads $x^{\gamma_i+\alpha_i}$ are distinct. By \ref{ScritturaDistinti1}  {\it  \ref{ScritturaDistinti1_iii})} we obtain $g \rightarrow_{\mathcal F \mathcal J}^+ l\downarrow  $ and    $x^{\gamma_i+\alpha_i} \Preceq x^\delta$ for some $x^\delta \in \supp (g)$; then for every $x^\epsilon $ in the support of $x^{\gamma_i}f_{\alpha_i}$ it holds $x^\epsilon   \Preceq x^\eta$, namely  $\sum_{i=1}^r c_i x^{\gamma_i}f_{\alpha_i}$ is a $x^\eta-\LR$.

The other implications are obvious.
\end{proof}

\begin{corollary}\label{reducedtails}
 Let  $\mathcal J=(M, \lambda, \tau)$ be a  noetherian RS  $\mathcal J$.  

Then there is a noetherian RS   $\mathcal J^{Red}=(M, \lambda^{Red}, \tau)$  with reduced tails,   such that for every $\mathcal J$-marked set $\mathcal F$ there is a ${\mathcal J^{Red}}$-marked set  $\mathcal F^{Red}$ that satisfies    $\langle \tau \mathcal F\rangle=\langle \tau\mathcal F^{Red}\rangle$. Moreover,   $\mathcal F$ is a marked basis iff $\mathcal F^{Red}$ is.

If  $\mathcal J$ is also confluent, then    $\mathcal F^{Red}$  is unique and 
 $\forall g\in \mathcal P$
$$g \rightarrow_{\mathcal F \mathcal J}^+ l\downarrow \iff  
g \rightarrow_{\mathcal F^{Red}\mathcal J^{Red}}^+ l\downarrow .$$
\end{corollary}

\begin{proof}
Assume that  $\mathcal J$ is ordered  by $\Prec$  (Theorem \ref{implicazioni}).
For every $x^\alpha \in M$ we choose as $\lambda^{Red}_\alpha$ the support of any  polynomial $\ell_\alpha$ such that $x^\alpha-f_\alpha \rightarrow_{\mathcal F \mathcal J'}^+  \ell_\alpha$ and set $\mathcal F^{Red}=\{f^{Red}_\alpha:=x^\alpha-\ell_\alpha \ |  \ x^\alpha \in M\}$. 
 
 Let us assume that  $\langle \tau \mathcal F\rangle\neq \langle \tau\mathcal F^{Red}\rangle$ and consider a minimal element $x^{\eta+\alpha} \in J$ such that $x^\eta \in \tau_\alpha$ and either $x^{\eta}f_\alpha \notin\langle \tau\mathcal F^{Red}\rangle$ or    $x^{\eta}f^{Red}_\alpha \notin\langle \tau\mathcal F\rangle$. Therefore,   if $x^\delta \in \tau_\beta$ and $x^{\delta+\beta} \Prec x^{\eta+\alpha} $, then both  $x^\delta f_{\beta}\in \langle \tau\mathcal F^{Red}\rangle$ and $x^\delta f^{Red}_{\beta}\in \langle \tau\mathcal F \rangle$. 
 
 By  Proposition \ref{ScritturaDistinti1} \eqref{ScritturaDistinti1_ii},   the difference   $x^\eta f^{Red}_\alpha-x^\eta f_\alpha$ has a $x^{\eta+\alpha}$-SLR $ \sum c_i x^{\eta_i} f_{\alpha_i}$  in $ \tau \mathcal F$;  by the assumption every $x^{\eta_i}f_{\alpha_i}$ also belongs to $ \langle \tau\mathcal F^{Red}\rangle$. 
Then we get a contradiction, since   $x^\eta f^{Red}_\alpha=x^\eta f_\alpha +\sum c_i x^{\eta_i} f_{\alpha_i} \in  \langle \tau\mathcal F\rangle$ and  $x^\eta f_\alpha=x^\eta f_\alpha^{Red} -\sum c_i x^{\eta_i} f_{\alpha_i} \in  \langle \tau\mathcal F^{Red}\rangle$.

As a consequence      $( \mathcal F)=(\mathcal F^{Red})$, so that  $\mathcal F$ is a   basis iff $\mathcal F^{Red}$ is by Corollary \ref{sommasemplice}.

The other assertions are direct consequence of the above construction.
\end{proof}
 
In what follows, we will use the second assertion of Proposition~\ref{ScritturaDistinti1} {\it (\ref{ScritturaDistinti1_iii} )}. Indeed, if one wants to use induction in proofs, it will be useful   to   consider the fact that not only a certain polynomial  $g$ is in $\langle \tau \mathcal F \rangle$, but also that $g$ can be written as a linear combination of elements in $\tau \mathcal F$ whose heads satisfy the property  underlined in  {\it (\ref{ScritturaDistinti1_iii} )}.\\ 
 
The following two examples show that the hypotheses of the various points of Pro\-position~\ref{ScritturaDistinti1} are necessary. Point ii) does not necessarily hold if $\mathcal J$ has non-disjoint cones.  Moreover, the conditions   $g-l\in \langle \tau \mathcal F\rangle $ and  $\supp(l)\subset \cN(J)$ do not necessarily imply that $g \rightarrow_{\mathcal F \mathcal J}^+ l$. 

\begin{example}
In $A[x,y]$, let   $\mathcal J =(M=\{x^2,xy\}, \{\lambda_{x^2}=\lambda_{xy}=\{x\}\} ,  \{\tau_{x^2}=\TT,  \tau_{xy}=\{y^k,xy^k, \ k\in \NN\} \})$; notice that $\mathcal J$ is noetherian, since it is coherent with any  degree compatible term order,  and it has not   disjoint cones  ($x^2y=x^2\cdot y =xy\cdot x$). Let moreover $\mathcal{F}=\{f_{x^2}=x^2-x,f_{xy}=xy\}$ and $g=x^2y-xy$. For each reduction  $g \rightarrow_{\mathcal F \mathcal J}^+ l\downarrow$ we have $l=0$, but $g$  has two representations of the form of Proposition \ref{ScritturaDistinti1}    {\it  \ref{ScritturaDistinti1_ii})}: $g=yf_{x^2}=xf_{xy}-f_{xy}$.
\end{example}

\begin{example}
 Consider the RS $\mathcal J =(M=\{x^2,xy,y^2\}, \{\lambda_{x^2}=\lambda_{y^2}=\lambda_{xy}=\{1\}\} ,  \{\tau_{x^2}=\tau_{y^2}=\tau_{xy}=\TT))$   and the marked set $\mathcal{F}=\{x^2-1, xy,y^2\}$ in  $A[x,y]$.

For  $g=y^3$ and  $l=y$ we have   $g-l=yf_{x^2}-xf_{xy}+yf_{y^2} \in \langle \tau \mathcal F \rangle$ and $\supp(l)\subset  \cN(J)$, but $g$ has only one possible complete reduction
 $g \rightarrow_{\mathcal F \mathcal J} 0\downarrow$ by means of $f_{y^2}$;  therefore,  $g \rightarrow_{\mathcal F \mathcal J}^+ l\downarrow$ does not hold.
Notice that $g=y^3=yf_{y^2} \in \langle \tau \mathcal{F}\rangle$, whereas $l=-yf_{x^2}+xf_{xy} \in \langle \tau \mathcal{F}\rangle\cap \langle \cN(J) \rangle$ is exactly the S-polynomial
$S(f_{x^2},f_{xy})$ (see Remark \ref{SPOL}).
 \end{example}

\section{Confluence and ideal membership}\label{Conflusect}
 The reduction procedure on a polynomial $f$ with respect to a given marked set $\mathcal F$ over a RS  $\mathcal J=(M,\lambda,\tau)$ in general is not unique. 

For instance, we start the reduction choosing a monomial $u$ in $\supp(f)\cap J$ (there could be several)   and a term $m$ in $M$ such that $u\in \cone(m)$ (there could be several).
If $\mathcal J$ is notherian, after a finite number of steps we obtain a reduced form $l$. It is natural to ask whether  $l$ could be independent of  the choices we performed, namely   under which conditions the procedure is confluent. 
 
\begin{definition} \label{defnoetheriana} Let $\mathcal F$ be a marked set over a weakly noetherian RS  $\mathcal J$. 
If for each polynomial  $g$   there is one and only one $l$ s.t.  $g \rightarrow_{\mathcal F \mathcal J}^+ l\downarrow$, then we  call $\rightarrow_{\mathcal F \mathcal J}^+ $   \emph{confluent}.

We  call $\mathcal J$ \emph{confluent} if for each marked set  
    $\mathcal F$  over $\mathcal J$, the reduction procedure 
 $\rightarrow_{\mathcal F \mathcal J}^+$ is confluent.
 \end{definition}

The most significant case of confluent RS is the one presented in the following

\begin{remark}\label{completezza} If $\mathcal J=(M,\lambda,\tau)$ is a weakly noetherian  RS with disjoint cones, then it is  noetherian and confluent.

Since Noetherianity follows by Lemma \ref{implicazionibanali},  we need to show that each marked set   $\mathcal F$ over $\mathcal J$ is confluent.    
If there are $g\in \PP$, $l,l'\in\PP$, $\supp(l)\in\cN(J),$ $\supp(l')\in\cN(J)$,  s.t.  $g \rightarrow_{\mathcal F \mathcal J}^+ l\downarrow$ and   $g \rightarrow_{\mathcal F \mathcal J}^+ l'\downarrow$,  then by  Corollary \ref{sommasemplice}, we would have  $l'-l=(g-l)-(g-l') \in \langle \tau \mathcal F \rangle$, hence $l'-l=0$. 
\end{remark}

\begin{example}\label{Diana}  
The set of all marked sets over the RS 
$$(M, \lambda:=\{\lambda_\alpha\, , \ x^\alpha \in M\}, \tau:=\{\tau_\alpha \, , \ x^\alpha \in M\})$$ 
with $\lambda_\alpha=\emptyset$ for all $x^\alpha \in M$, consists of the single set $\{f_\alpha=x^\alpha :  x^\alpha \in M\}$ namely with  the monomial set $M$ it self.
Therefore $\mathcal J$  is obviously noetherian and confluent. 

If however we assume (as in Buchberger Theory) $\tau_\alpha=\mathcal T$ for all $x^\alpha \in M$,  then the cones of two different monomials in $M$  are not disjoint!  
\end{example}

Of course, a RS  $\mathcal J=(M,\lambda,\tau)$ coherent with a term order and with maximal cones is both noetherian and with non-disjoint cones (unless $\# M=1$).  In this ``natural'' setting confluency is related with ideal membership.
On the other side, Janet (followed by all research in involutiveness) introduced, in the reduction step related with \emph{membership test}, the restriction to disjoint cones thus trivially guaranteeing confluence; the counterpart, clearly, is that one has to transfer to a different procedure the task of granting that   the $A$-vectorspace $\langle \tau \mathcal F\rangle$ generated by the set $\tau \mathcal F$ of all polynomials $x^\gamma f_\alpha$ with $f_\alpha \in \mathcal F$ and $x^\gamma \in \tau_\alpha$ which, in principle is just a subvectorspace of the ideal
$(\mathcal F)$ generated by $\mathcal F$, really coincides with it; Janet approach was, originally via Riquier's \emph{completion}, later, in connection with Cartan test, with   complete linear reduction of sufficiently many vectorspaces $\mathcal F_d$.
\medskip

Let $\mathcal J$ be a weakly noetherian RS.  Even if the cones in $\mathcal J$ are not disjoint, we can  \lq\lq simulate\rq\rq\   this property in the following way.  

Let $\tilde{\tau}=\{\tilde{\tau}_\alpha,  x^\alpha \in M\}$  be s.t. each $\tilde{\tau}_\alpha$ is a subset of $  \tau_\alpha$; in what follows we will denote by $\rightarrow_{\tilde{\tau}\mathcal F \mathcal J}^+  $ the reduction process obtained by  using only polynomials of  $\tilde{\tau}{\mathcal F} :=\{ x^\eta f_\alpha \, \vert \,  f_\alpha \in \mathcal F, \, x^\eta \in  \tilde{\tau}_\alpha \}$. 

\begin{lemma} \label{limitiamo}   Let    $\mathcal J=(M, \lambda ,  \tau )$ be a weakly noetherian RS. Then, there is a list of sets of terms $ \overline{\tau}=\{ \overline{\tau}_\alpha\}_{x^\alpha\in M} $ with  $\overline{\tau}_\alpha\subseteq \tau_\alpha$  s.t. 
\begin{itemize}
\item $\forall  x^\alpha, x^{\alpha'}\in M$, $ x^\alpha \neq x^{\alpha'}$, one has  $x^\alpha \overline{\tau}_\alpha \cap  x^{\alpha'} \overline{\tau}_{\alpha'}=\emptyset$ 
 \item  $\bigcup_{x^\alpha \in M} x^\alpha \overline{\tau}_\alpha =J$
\item  for each marked set $\mathcal F$ on  $\mathcal J$,  the reduction process   $ \rightarrow_{\overline{\tau}\mathcal F \mathcal J}^+ $  is  noetherian.
\end{itemize}
 \end{lemma}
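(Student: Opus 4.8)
The plan is to pull the problem back to a noetherian substructure and then to \emph{carve} the covering of $J$ into disjoint pieces by a priority argument on the finite set $M$. Since $\mathcal J$ is weakly noetherian, by definition it admits a noetherian substructure $\mathcal J^*=(M,\lambda,\tau^*)$ with $\tau^*_\alpha\subseteq\tau_\alpha$ for every $x^\alpha\in M$; being itself a reduction structure, its cones still cover $J$, i.e. $\bigcup_{x^\alpha\in M}x^\alpha\tau^*_\alpha=J$. All the sets $\overline\tau_\alpha$ I will build will be contained in $\tau^*_\alpha$ (hence in $\tau_\alpha$), and it is exactly this containment that will carry noetherianity over from $\mathcal J^*$.

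First I would fix, once and for all, a linear ordering $x^{\alpha_1},\dots,x^{\alpha_m}$ of $M$ and set
$$\overline\tau_{\alpha_i}:=\bigl\{\,x^\eta\in\tau^*_{\alpha_i}\ \big|\ x^{\alpha_i+\eta}\notin x^{\alpha_j}\tau^*_{\alpha_j}\ \text{ for every } j<i\,\bigr\},$$
so that each term of $J$ gets assigned to the \emph{first} generator whose cone contains it. The first bullet is immediate from $\overline\tau_\alpha\subseteq\tau^*_\alpha\subseteq\tau_\alpha$. For disjointness, if $i<j$ then every element of $x^{\alpha_j}\overline\tau_{\alpha_j}$ lies outside $x^{\alpha_i}\tau^*_{\alpha_i}$ by construction, whereas $x^{\alpha_i}\overline\tau_{\alpha_i}\subseteq x^{\alpha_i}\tau^*_{\alpha_i}$, whence $x^{\alpha_i}\overline\tau_{\alpha_i}\cap x^{\alpha_j}\overline\tau_{\alpha_j}=\emptyset$. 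For the covering, any $x^\beta\in J$ belongs to some $x^{\alpha_i}\tau^*_{\alpha_i}$; choosing the least such index $i$ gives $x^\beta\in x^{\alpha_i}\overline\tau_{\alpha_i}$, so $\bigcup_{x^\alpha\in M}x^\alpha\overline\tau_\alpha=J$.

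For the third bullet, I would argue that $\xrightarrow{\overline\tau\mathcal F}$ inherits noetherianity from $\mathcal J^*$. A marked set $\mathcal F$ on $\mathcal J$ is equally a marked set on $\mathcal J^*$, since both depend only on $M$ and $\lambda$. Because $\overline\tau_\alpha\subseteq\tau^*_\alpha$, the reducers $\overline\tau\mathcal F$ form a subset of $\tau^*\mathcal F$, so every base step of $\xrightarrow{\overline\tau\mathcal F}$ is a legitimate base step of $\xrightarrow{\mathcal F \mathcal J^*}$; as the latter process is noetherian, every reduction sequence terminates after finitely many steps. Finally such a sequence can only stop at a reduced form: if $\supp(g)\cap J\neq\emptyset$, then, since $\bigcup_{x^\alpha\in M}x^\alpha\overline\tau_\alpha=J$, some term of the support still lies in a set $x^\alpha\overline\tau_\alpha$ and a further step is available.

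The only delicate point, and the step I expect to be the real obstacle, is that the carving must be performed on the cones of the noetherian substructure $\tau^*$ and not on the full $\tau$: a priority carving of $\tau$ itself would still yield disjoint cones covering $J$, but nothing would guarantee termination, so one must discard just enough multipliers to stay inside $\mathcal J^*$. A minor caveat worth recording is that the resulting $\overline\tau_\alpha$ need not be order ideals, so $(M,\lambda,\overline\tau)$ is not a reduction structure in the strict sense; this is harmless, because the statement only asks for lists of sets of terms and the reduction process $\xrightarrow{\overline\tau\mathcal F}$ is well defined for arbitrary $\overline\tau$.
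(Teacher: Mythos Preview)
Your proof is correct and follows essentially the same approach as the paper: both pass to a noetherian substructure and then carve its cones into disjoint pieces whose union is $J$, deducing termination from the containment $\overline\tau_\alpha\subseteq\tau^*_\alpha$. The paper's argument is terser---it simply asserts that one may ``choose arbitrarily'' subsets of $\tau'_\alpha$ satisfying the first two bullets---whereas you make that choice explicit via the priority construction and add the useful remarks that the carving must happen inside $\tau^*$ and that the resulting $\overline\tau_\alpha$ need not be order ideals.
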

\begin{proof} 
By hypothesis there is a noetherian substructure $\mathcal J'=(M, \lambda, \tau')$ of $\mathcal J$, so  $\tau'_\alpha \subseteq \tau_\alpha$ and  $J=\bigcup_{x^\alpha \in M} x^\alpha {\tau'}_\alpha$.

We can construct the required subsets  $\overline{\tau}_\alpha$ of $\tau'_\alpha$    as follows:  for  each $x^\beta\in J$  we choose randomly one and only one monomial  $x^\alpha\in M$ s.t. $x^\beta=x^\gamma x^\alpha$ with $x^\gamma \in \tau'_\alpha$ and insert $x^\gamma$ in $\overline{\tau}_\alpha$.    

Of course this is all one needs to find subsets $\overline{\tau}_\alpha\subseteq \tau_\alpha$ and  grant that the first two conditions are satisfied; moreover,  noetherianity of $\mathcal J'$ grants noetherianity of $ \rightarrow_{\overline{\tau}\mathcal F \mathcal J}^+$. 
\end{proof}
By a  restriction to disjoint cones we can  now  reinforce point  iii)  of Proposition \ref{ScritturaDistinti1}.
\begin{proposition}\label{sommadiretta1}Let  $\mathcal F$ be a marked set over a weakly noetherian RS  $\mathcal J$,  $\mathcal J'$ be a noetherian substructure (with order $\Prec$) and   $\overline{\tau}$  be as in Lemma \ref{limitiamo}.

Then, $\forall g\in \PP$, there exists  a unique $J$-remainder $l$   s.t.  $g \rightarrow_{\overline{\tau}\mathcal F \mathcal J}^+ l $.   Moreover,  $g-l$ has a representation
 $\sum_{j}c_j x^{\gamma_j}f_{\alpha_j}$ by  $\overline{ \tau} \mathcal F $ with all distinct heads and   $x^{\gamma_j+\alpha_j} \Prec  x^\delta$ for some $x^\delta \in \supp(g)$,  and $l=0$ if and only if  $ g \in \langle {\overline{\tau}}{\mathcal F}\rangle$.
Therefore
\[   \langle {\overline{\tau}}{\mathcal F}\rangle \oplus \langle  {\cN}(J)\rangle = \PP .\]
\end{proposition}
\begin{proof} 
For every polynomial  $g\in \PP$, the $J$-remainder  $l$ exists and is unique  by Lemma \ref{limitiamo}.
Notice that the elements of ${\overline{\tau}}{\mathcal F}   $ have all distinct heads; moreover   $  \rightarrow_{\overline{\tau}\mathcal F \mathcal J}^+ $ is   noetherian  since  $\mathcal J'$ is  noetherian.  We conclude by Corollary \ref{sommasemplice}.
\end{proof}

We can now characterize confluency of marked sets over  weakly noetherian RSs

\begin{theorem} \label{caratterizzazioneconfluenza}
Let $\mathcal F$ be a marked set over a weakly noetherian RS  $\mathcal J$ and let $\mathcal J'$ and    $\overline{\tau} $  be as in Lemma \ref{limitiamo}. The following statements are equivalent:

\begin{enumerate}[i)]
\item \label{caratterizzazioneconfluenza_i} $\rightarrow_{\mathcal F \mathcal J}^+ $ is confluent.
\item \label{caratterizzazioneconfluenza_ii}   $ \langle \tau \mathcal F \rangle  \oplus \langle {\cN}(J)\rangle = \PP$.
\item  \label{caratterizzazioneconfluenza_iii}   $ \langle \tau \mathcal F \rangle  \cap  \langle {\cN}(J)\rangle=0$.
\item  \label{caratterizzazioneconfluenza_iv} $ \langle\tau \mathcal F\rangle =\langle\tau' \mathcal F\rangle = \langle {\overline{\tau}}{\mathcal F}\rangle $.
\item \label{caratterizzazioneconfluenza_v}  for each  $ x^{\eta} f_{\alpha} \in \tau\mathcal F \setminus  {\overline{\tau}}{\mathcal F}$ it holds $ x^{\eta} f_{\alpha} { \rightarrow_{\overline{\tau}\mathcal F \mathcal J}^+ } \ 0\downarrow$.  
\item  \label{caratterizzazioneconfluenza_vi} for each  $  x^{\eta} f_{\alpha}\in \tau\mathcal F $,  for each reduction $ x^{\eta} f_{\alpha}  \rightarrow_{\mathcal F \mathcal J'}^+ l\downarrow$ it holds $l=0$. 
\item \label{caratterizzazioneconfluenza_vii}  for each  $  x^{\eta} f_{\alpha}\in \tau\mathcal F $, $   x^{\eta'} f_{\alpha'} \in \tau'\mathcal F $ with $x^{\eta+\alpha}=x^{\eta'+\alpha'}$ it holds  $ x^{\eta} f_{\alpha}- x^{\eta'} f_{\alpha'} \rightarrow_{\mathcal F \mathcal J'}^+ 0\downarrow$.  
 \item  \label{caratterizzazioneconfluenza_viii}    for each $  x^{\eta} f_{\alpha}  \in \tau\mathcal F $, for each reduction $ x^{\eta} f_{\alpha} \rightarrow_{\mathcal F \mathcal J}^+ l\downarrow$ it holds $l=0$.  
 \item  \label{caratterizzazioneconfluenza_ix}  for each $  x^{\eta} f_{\alpha}, x^{\eta'} f_{\alpha'} \in \tau\mathcal F $ with $x^{\eta+\alpha}=x^{\eta'+\alpha'}$, for each reduction $ x^{\eta} f_{\alpha}- x^{\eta'} f_{\alpha'} \rightarrow_{\mathcal F \mathcal J}^+ l\downarrow$ it holds $l=0$.  
\end{enumerate}
\end{theorem}

\begin{proof} 
ii) $\Leftrightarrow $ iii): the assertion trivially follows from Corollary \ref{sommasemplice}.
 
iii) $\Rightarrow $ i): notice that if  $ g \rightarrow_{\mathcal F \mathcal J}^+ l\downarrow$ and $g \rightarrow_{\mathcal F \mathcal J}^+ l'\downarrow$, then the difference $l-l'$ belongs to $ \langle \tau \mathcal F \rangle  \cap  \langle {\cN}(J)\rangle$, so  $l-l'=0$.

ii) $\Leftrightarrow $ iv) $\Leftrightarrow $ v): follow from   Proposition  \ref{sommadiretta1}   and from the fact that by construction  $  \tau \mathcal F \supseteq\tau' \mathcal F \supseteq   \overline \tau {\mathcal F}  $.  

 viii) $\Rightarrow $ vi)  $\Rightarrow $ v)  are trivial, since the reductions  by  $ \rightarrow_{\overline{\tau}\mathcal F \mathcal J}^+$ are particular cases of the ones by $\rightarrow_{\mathcal F \mathcal J'}^+ $, that are particular cases of the ones by  $ \rightarrow_{\mathcal F \mathcal J}^+ $. 
 Notice that    $\mathcal F$ is weakly noetherian,   so each polynomial has at least a total reduction.  
 
i) $\Rightarrow $ viii)  is again obvious; indeed every polynomial $x^\eta f_\alpha\in \tau\mathcal F  $ has at least the complete reduction  $x^\eta f_\alpha  \rightarrow_{\mathcal F \mathcal J} x^\eta f_\alpha-x^\eta f_\alpha=0 \downarrow$.

\medskip

As a consequence of what  proved so far,  the conditions i),  ii),  iii), iv), v), vi), viii) are equivalent. 

\medskip

 iii) $\Rightarrow$   ix): it is sufficient to observe that in the hypotheses  ix) the polynomial  $ x^{\eta} f_{\alpha}- x^{\eta'} f_{\alpha'}$ belongs to $\tau \mathcal F$   and    $l $ is in the intersection $ \langle \tau \mathcal F \rangle  \cap  \langle {\cN}(J)\rangle$.
 
ix) $\Rightarrow$  vii) directly follows by the same argument used to prove \lq \lq viii) $\Rightarrow $ vi)  $\Rightarrow $ v) \rq\rq.

\medskip

 We finally prove   vii)  $\Rightarrow $ iv). By     Proposition \ref{ScritturaDistinti1} i),    condition vii) implies $\langle \tau \mathcal F \rangle \subseteq \langle \tau' \mathcal F \rangle$. Then, it is sufficient to prove that $\langle \tau' \mathcal F \rangle \subseteq \langle \overline \tau \mathcal F \rangle$, the opposite inclusions being obvious.    

 Assume by contradiction that  the set $ \tau' \mathcal F \setminus  \langle \overline \tau \mathcal F \rangle $  is not empty and choose in it an element  $ x^{\eta} f_{\alpha}$ such that is  minimal $x^{\eta +\alpha}$, w.r.t. the  ordering $\Prec$  associated to  $\mathcal J'$. Let, moreover, $   x^{\eta'} f_{\alpha'}$ the only element in  $  \overline \tau \mathcal F  $ such $x^{\eta+\alpha}=x^{\eta'+\alpha'}$: we may apply vii) to these two elements (as  $ x^{\eta} f_{\alpha} \in \tau' \mathcal F \subseteq \tau \mathcal F$  and $x^{\eta'} f_{\alpha'}\in  \overline \tau \mathcal F \subseteq  \tau' \mathcal F  $) finding a reduction  $ x^{\eta} f_{\alpha}- x^{\eta'} f_{\alpha'}\rightarrow_{\mathcal F \mathcal J'}^+ 0\downarrow$.  

We observe that  every term $x^\gamma \in \supp( x^{\eta} f_{\alpha}- x^{\eta'} f_{\alpha'})$ is either in $\supp(x^{\eta} f_{\alpha}-x^{\eta+\alpha})$ or in  $\supp(x^{\eta'} f_{\alpha'}-x^{\eta'+\alpha'})$. In both cases,   $ x^\gamma \Prec   x^{\eta+\alpha}=x^{\eta'+\alpha'}$.

Then, by   Corollary  \ref{sommasemplice}, the polynomial       $ x^{\eta} f_{\alpha}- x^{\eta'} f_{\alpha'}$ has a $x^{\eta+\alpha}-\SLR$   in $\tau'\mathcal F$ of the type $\sum_{i=1}^r c_i x^{\gamma_i}f_{\alpha_i}$. By the  minimality of $x^{\eta+\alpha}$ we deduce that  the summands $x^{\gamma_i}f_{\alpha_i}$ belong to $ \langle \overline \tau \mathcal F \rangle $, hence also 
$ x^{\eta} f_{\alpha}- x^{\eta'} f_{\alpha'}$ does. This is the wanted contradiction, as $ x^{\eta} f_{\alpha} \notin \langle \overline \tau \mathcal F \rangle $ and $ x^{\eta'} f_{\alpha'} \in \overline \tau \mathcal F  $.
\end{proof} 
Now we assume that  $\mathcal J$ is a weakly noetherian RS and    see which conditions have to be satisfied by a marked set
$\mathcal F$   over  $\mathcal J$ in order that the rewriting rule  $ \rightarrow_{\mathcal F \mathcal J}^+ $ give a criterion which is equivalent to the belonging to the ideal generated by  $\mathcal F$, i.e. in order that 
$$    g\equiv g' \bmod{(\mathcal F)} \Longleftrightarrow  \forall
g \rightarrow_{\mathcal F \mathcal J}^+ l\downarrow  \hbox{  and  } \forall    
g' \rightarrow_{\mathcal F \mathcal J}^+ l'\downarrow, \hbox{ it holds } l=l'$$
is satisfied.

We can observe that in order to apply the test implied by $\Leftarrow$ for deciding ideal equivalence (and in particular,
 ideal membership ) we must require that $\mathcal J$ is weakly noetherian; 
 indeed, if there is a polynomial $g$ without complete reductions, the reduction 
cannot allow us to establish whether $g$ belongs to $ (\mathcal F)$\footnote{This is the ``flaw'' of Hironaka Theory (see \cite{7var}).}

The ideal membership can be reformulated through  the notion of marked bases (Definition \ref{BaseMark}), which constitutes a central point for the whole theory.

\begin{theorem}\label{sommadiretta}
Let $\mathcal J=(M,\lambda,\tau)$ be a weakly noetherian RS and let  $\mathcal J'$  and $\overline{\tau}$ be as in Lemma \ref{limitiamo}. 
Moreover, let   $\mathcal F$ be a marked set over $\mathcal J$.

If $\mathcal F$ is a marked basis, then $ \rightarrow_{\mathcal F \mathcal J}^+$ is confluent. 

On the other hand, if we suppose that    $\rightarrow_{\mathcal F \mathcal J}^+ $ is confluent, then $\mathcal F$ is a marked basis if and only if one of the following equivalent conditions holds:
\begin{enumerate}[i)]
\item \label{sommadiretta_i}  $(\mathcal F)=\langle \overline{\tau} \mathcal F \rangle,  $
\item \label{sommadiretta_ii}  $(\mathcal F)=\langle \tau' \mathcal F \rangle,  $
\item \label{sommadiretta_iii} $(\mathcal F)=\langle  {\tau}{\mathcal F}\rangle, $
\item \label{sommadiretta_iv} for each  $x^\alpha \in M$ and each $x^\gamma \notin \overline{\tau}_\alpha$   it holds  $x^\gamma f_\alpha   \rightarrow_{\overline{\tau}\mathcal F \mathcal J}^+ 0\downarrow$ ,
\item \label{sommadiretta_v} for each $x^\alpha \in M$ and $x^\gamma \notin \tau'_\alpha$   it holds $x^\gamma f_\alpha  \rightarrow_{\mathcal F \mathcal J'}^+ 0\downarrow $,
\item \label{sommadiretta_vi}  for each $x^\alpha \in M$ and  $x^\gamma \notin \tau_\alpha$   it holds  $x^\gamma f_\alpha \rightarrow_{\mathcal F \mathcal J}^+ 0\downarrow$.
\end{enumerate}
\end{theorem}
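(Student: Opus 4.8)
The plan is to treat the two assertions separately and to reduce everything to the module identities already characterised in Theorem~\ref{caratterizzazioneconfluenza}, together with the direct sum decomposition $\langle \overline{\tau}\mathcal F\rangle \oplus \langle \cN(J)\rangle = \PP$ of Proposition~\ref{sommadiretta1}.

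For the first assertion I would start from the trivial inclusion $\langle \tau \mathcal F\rangle \subseteq (\mathcal F)$, valid because every $x^\eta f_\alpha \in \tau \mathcal F$ lies in the ideal $(\mathcal F)$. If $\mathcal F$ is a marked basis then $(\mathcal F) \cap \langle \cN(J)\rangle = 0$ by Definition~\ref{BaseMark}, so a fortiori $\langle \tau \mathcal F\rangle \cap \langle \cN(J)\rangle = 0$; by the equivalence of i) and iii) in Theorem~\ref{caratterizzazioneconfluenza} this is exactly confluency of $\xrightarrow{\mathcal F \mathcal J}$. This settles the first half.

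For the second assertion I assume $\xrightarrow{\mathcal F \mathcal J}$ confluent, so that Theorem~\ref{caratterizzazioneconfluenza} gives at once the equalities $\langle \tau \mathcal F\rangle = \langle \tau' \mathcal F\rangle = \langle \overline{\tau}\mathcal F\rangle$ together with $\langle \tau \mathcal F\rangle \oplus \langle \cN(J)\rangle = \PP$ and $\langle \tau \mathcal F\rangle \cap \langle \cN(J)\rangle = 0$. The equivalence of conditions i), ii), iii) is then immediate, being the single statement $(\mathcal F) = \langle \tau \mathcal F\rangle$ rewritten with three equal modules. To link iii) to the marked basis property I would use a squeeze argument: from $\langle \tau \mathcal F\rangle \subseteq (\mathcal F)$ and $\langle \tau \mathcal F\rangle \oplus \langle \cN(J)\rangle = \PP$, if $\mathcal F$ is a marked basis then any $p \in (\mathcal F)$ decomposes as $p = q + r$ with $q \in \langle \tau \mathcal F\rangle$ and $r \in \langle \cN(J)\rangle$; since $r = p - q \in (\mathcal F) \cap \langle \cN(J)\rangle = 0$, we obtain $p \in \langle \tau \mathcal F\rangle$, hence iii). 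Conversely iii) together with $\langle \tau \mathcal F\rangle \oplus \langle \cN(J)\rangle = \PP$ yields $(\mathcal F) \oplus \langle \cN(J)\rangle = \PP$, the marked basis property. It remains to match the reduction conditions iv), v), vi) with i), ii), iii): the common mechanism is that $(\mathcal F)$ is generated as an $A$-module by all $x^\gamma f_\alpha$ with $x^\gamma \in \TT$, and the products with $x^\gamma$ in the relevant multiplier set already lie in the corresponding module, so equality of $(\mathcal F)$ with that module amounts to membership of the remaining $x^\gamma f_\alpha$. For iv) I would invoke Proposition~\ref{sommadiretta1}, giving $x^\gamma f_\alpha \xrightarrow{\overline{\tau}\mathcal F}_\star 0 \iff x^\gamma f_\alpha \in \langle \overline{\tau}\mathcal F\rangle$ and hence iv)~$\Leftrightarrow$~i); for vi), Proposition~\ref{ScritturaDistinti1} i) gives $x^\gamma f_\alpha - l \in \langle \tau \mathcal F\rangle$ for a $J$-remainder $l$, and $\langle \tau \mathcal F\rangle \cap \langle \cN(J)\rangle = 0$ forces $l = 0$ precisely when $x^\gamma f_\alpha \in \langle \tau \mathcal F\rangle$, with uniqueness of the remainder supplied by confluency, so vi)~$\Leftrightarrow$~iii); case v) is identical using $\mathcal J'$ and $\langle \tau' \mathcal F\rangle = \langle \tau \mathcal F\rangle$.

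The step I expect to be most delicate is not any single computation but keeping straight that the reduction-to-zero conditions are genuinely equivalent to module membership: this needs both the existence of a complete reduction (weak noetherianity, via Proposition~\ref{ScritturaDistinti1} i) and Proposition~\ref{sommadiretta1}) and the triviality of $\langle \tau \mathcal F\rangle \cap \langle \cN(J)\rangle$ (confluency) in order to pin the remainder to $0$. Once these two facts are in hand, the remaining equivalences are pure bookkeeping along the chain $\overline{\tau}\mathcal F \subseteq \tau' \mathcal F \subseteq \tau \mathcal F \subseteq (\mathcal F)$, and the whole argument never appeals to a term order.
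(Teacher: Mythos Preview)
Your proposal is correct and follows essentially the same route as the paper: the first assertion via $\langle \tau\mathcal F\rangle \cap \langle\cN(J)\rangle \subseteq (\mathcal F)\cap\langle\cN(J)\rangle=0$ and Theorem~\ref{caratterizzazioneconfluenza}; the second via Theorem~\ref{caratterizzazioneconfluenza}~iv) for the equalities $\langle\overline\tau\mathcal F\rangle=\langle\tau'\mathcal F\rangle=\langle\tau\mathcal F\rangle$ and Proposition~\ref{sommadiretta1} for the link between i) and the marked-basis property. The only organisational difference is that the paper first argues iv), v), vi) are mutually equivalent (confluence of $\xrightarrow{\mathcal F\mathcal J}$ cascades to $\xrightarrow{\mathcal F\mathcal J'}$ and $\xrightarrow{\overline\tau\mathcal F}$, and the restriction on $x^\gamma$ is vacuous for multipliers already in the set) and then proves only i)~$\Leftrightarrow$~iv), whereas you pair each of iv), v), vi) directly with its module-theoretic counterpart i), ii), iii); both arrangements are sound.
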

\begin{proof}    If $\mathcal F$ is a marked basis,   we have   $  \langle \tau  \mathcal F\rangle  \cap  \langle {\cN}(J)\rangle\subseteq ( \mathcal F)  \cap \langle {\cN}(J)\rangle {=0}$; then $\rightarrow_{\mathcal F \mathcal J}^+$ is confluent by Theorem \ref{caratterizzazioneconfluenza} iii) $\Rightarrow $ i).  

Now,  assume that $ \rightarrow_{\mathcal F \mathcal J}^+ $ is confluent.  

The conditions  {\it \ref{sommadiretta_i})}, {\it \ref{sommadiretta_ii})}, {\it \ref{sommadiretta_iii})} are equivalent by  Theorem \ref{caratterizzazioneconfluenza} and the conditions  {\it \ref{sommadiretta_iv})}, {\it \ref{sommadiretta_v})}, {\it \ref{sommadiretta_vi})} are equivalent since   the confluence of $ \rightarrow_{\mathcal F \mathcal J}^+ $ grants also the confluence of  $ \rightarrow_{\mathcal F \mathcal J'}^+$ and $  \rightarrow_{\overline{\tau}\mathcal F \mathcal J}^+$.   Notice that in each of the three conditions  {\it \ref{sommadiretta_iv})}, {\it \ref{sommadiretta_v})}, {\it \ref{sommadiretta_vi})} the restriction on $x^\gamma$    could  be omitted; indeed, if  for instance $x^\gamma \in \tau_\alpha$, then   by a single step of reduction on $x^{\gamma+\alpha}$ we obtain   $x^\gamma f_\alpha \rightarrow_{\mathcal F \mathcal J}   x^\gamma f_\alpha -x^\gamma f_\alpha = 0 \downarrow$.
Finally, the equivalence between     {\it \ref{sommadiretta_i})}   and  {\it \ref{sommadiretta_iv})}  is consequence of Proposition \ref{sommadiretta1} and of the above remark about  $x^\gamma$.

 We conclude observing that    $\langle \overline{\tau}\mathcal F\rangle \subseteq (\mathcal F)$, so by Proposition \ref{sommadiretta1}, $\mathcal F$ is a marked basis if and only if    {\it \ref{sommadiretta_i})}  holds. 
\end{proof}

\begin{remark}\label{SPOL}
We can reformulate the characterizations of confluence of  Theorem \ref{caratterizzazioneconfluenza}  and of marked bases of  Theorem \ref {sommadiretta} using the reduction w.r.t. polynomials of the form $ x^{\eta} f_{\alpha}- x^{\eta'} f_{\alpha'}$ with  $x^{\eta+\alpha}=x^{\eta'+\alpha'}$.
Notice, anyway, that they are not only the   S-polynomials $S( f_{\alpha'},   f_{\alpha}):= x^{\eta} f_{\alpha}- x^{\eta'} f_{\alpha'}$, with     $x^{\eta+\alpha}=x^{\eta'+\alpha'}=\lcm(x^\alpha, x^{\alpha'})$, but  a priori   also all their (infinite) multiples.

In  Appendix we will prove that for every weakly noetherian RS there exists a finite set of  controls using reductions that  are  sufficient to ensure that a marked set is a marked basis.  However, this result is non-constructive. In particular, we do not have neither a proof nor a counter-example to the reasonable conjecture that the set of $S$-polynomials could be sufficient to this purpose.
 
For this reason, for practical purposes, it is necessary to consider RSs  with particular properties, allowing to execute those verifications  with a  known, finite (possibly small)   set  of reductions. We will examine  two  sufficiently general cases in Sections \ref{sez:disgiunti} and \ref{sez:ordered}; in both of them the set of controls corresponds to the set of $S$-polynomials or a subset of it. 
\end{remark}

\section{Maximal and disjoint cones: criteria for marked bases}\label{sez:disgiunti}

In the usual reduction procedure w.r.t. a set of marked polynomials, one admits to rewrite any multiple of 
 $x^\alpha$ with the marked polynomial $f_\alpha$  whose head is  $x^\alpha$. 
 In our language, every  term in  $\TT$ is considered as  multiplicative  for each $x^\alpha \in M$: these are the structures we call \emph{with maximal cones}.

 If such a RS  $\mathcal J=(M,\lambda,\tau)$ is noetherian we already remarked that it must be necessarily coherent with a term order by Theorem  \ref{REEVESTURMFELS}. Then the marked bases over $\mathcal J$ are Gr\"obner bases. Moreover, for a set $\mathcal F$ marked over $\mathcal J$ the fact of being a marked basis and the confluency of $ \rightarrow_{\mathcal F \mathcal J}^+ $  are equivalent,  since $(\mathcal F)$ and $\langle \tau \mathcal F \rangle$ coincide by construction.

It is a well known fact that in the Gr\"obner case, in order to check whether a marked set is a marked basis  ({\em id est} a Gr\"obner basis)
  it is sufficient to perform a finite number of controls   which can be deduced by the given data, namely 
Buchberger test/completion result states that a basis (in our language: a marked set) $\mathcal{F}$ is Gr\"obner  (in our language: a marked basis) if and only if each element in the set of all S-polynomials
$$\left\{S( f_{\alpha'},   f_{\alpha}):= \frac{\lcm(x^\alpha, x^{\alpha'})}{x^\alpha} 
 f_{\alpha}- \frac{\lcm(x^\alpha, x^{\alpha'})}{x^{\alpha'}}  f_{\alpha'} : x^\alpha, x^{\alpha'}\in M\right\}$$
between two elements of $\mathcal{F}$, reduces to 0.

Thus we do not need to check  any of their multiples.  
 
 Being a well known theory, we do not treat it in the usual way, but we change our point of view.

As underlined in Remark \ref{codefinite},  the concept of marked basis depends only   on     $\mathcal F$ and it does not depend   on   the RS over which we consider it   as    a marked set.   In order to characterize the marked bases over  $\mathcal J$, a substructure   $\mathcal J'$ of $\mathcal J$ having disjoint cones  (if it exists) could be useful; when,  as in Gr\"obner theory,  $\mathcal J$ has maximal cones,  such a substructure exists.
We propose here one of the possible ways to   construct it. 

\begin{lemma} \label{sottoconidisgiunti} If $\mathcal J=(M,\lambda, \tau)$ is a RS with maximal cones, then there is a substructure $\mathcal J'=(M, \lambda, \tau')$ with disjoint cones.   
\end{lemma}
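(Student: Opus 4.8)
The plan is to produce the disjoint cones by a \emph{first-fit ownership} rule. Since $\mathcal J$ has maximal cones, each $\cone(x^\alpha)=x^\alpha\TT$ is simply the principal semigroup ideal generated by $x^\alpha$, and the covering condition $\bigcup_{x^\alpha\in M}\cone(x^\alpha)=J$ records nothing more than the fact that $M$ generates $J$. So the whole content is combinatorial: I must carve $J$ into disjoint pieces, one per element of $M$, each piece of the form $x^\alpha\tau'_\alpha$ with $\tau'_\alpha$ an order ideal contained in $\TT=\tau_\alpha$.

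First I would fix an arbitrary total order on the finite set $M$, writing $M=\{x^{\alpha_1},\dots,x^{\alpha_s}\}$. To every term $x^\beta\in J$ I assign the owner $\mathrm{owner}(x^\beta):=x^{\alpha_i}$, where $i$ is the smallest index with $x^{\alpha_i}\mid x^\beta$; such an $i$ exists precisely because $M$ generates $J$. I then set $\tau'_{\alpha_i}:=\{x^\eta\in\TT : \mathrm{owner}(x^{\alpha_i+\eta})=x^{\alpha_i}\}$, equivalently the set of $x^\eta$ such that no $x^{\alpha_j}$ with $j<i$ divides $x^{\alpha_i+\eta}$. A short unwinding of the definitions shows that $\cone'(x^{\alpha_i})=\{x^{\alpha_i+\eta}:x^\eta\in\tau'_{\alpha_i}\}$ is exactly the set of terms owned by $x^{\alpha_i}$.

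With this identification the two structural conditions are immediate. Covering and disjointness both follow from the fact that every $x^\beta\in J$ has exactly one owner: the sets $\cone'(x^{\alpha_i})$ partition $J$, so $\bigcup_i\cone'(x^{\alpha_i})=J$ and $\cone'(x^{\alpha_i})\cap\cone'(x^{\alpha_j})=\emptyset$ for $i\neq j$. The substructure and tail conditions are free as well: since $\tau'_\alpha\subseteq\TT=\tau_\alpha$ we get $\cone'(x^\alpha)\subseteq\cone(x^\alpha)$, hence $\lambda_\alpha\subseteq\TT\setminus\cone(x^\alpha)\subseteq\TT\setminus\cone'(x^\alpha)$, so $\mathcal J'=(M,\lambda,\tau')$ is a legitimate reduction structure and $\mathcal J'\subseteq\mathcal J$.

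The one genuinely non-formal step — the point I expect to be the crux — is verifying that each $\tau'_{\alpha_i}$ is an order ideal. I would argue by divisibility: suppose $x^{\eta'}\mid x^\eta$ and $x^{\eta'}\notin\tau'_{\alpha_i}$, so some $x^{\alpha_j}$ with $j<i$ divides $x^{\alpha_i+\eta'}$; since $x^{\alpha_i+\eta'}\mid x^{\alpha_i+\eta}$, that same $x^{\alpha_j}$ divides $x^{\alpha_i+\eta}$, whence $x^\eta\notin\tau'_{\alpha_i}$. Thus every divisor of a member of $\tau'_{\alpha_i}$ again lies in $\tau'_{\alpha_i}$, as required. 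No further obstacle is expected; the only subtlety worth flagging is that $M$ need not be the minimal generating set $\cG(J)$, so a redundant generator $x^{\alpha_i}$ (one that is a multiple of some earlier $x^{\alpha_j}$) simply receives $\tau'_{\alpha_i}=\emptyset$, which is a vacuous — hence perfectly valid — order ideal, and the construction degrades gracefully.
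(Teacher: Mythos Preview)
Your proof is correct and follows essentially the same greedy first-fit partition as the paper: both order $M$ and let $\cone'(x^{\alpha_i})$ consist of the multiples of $x^{\alpha_i}$ not already claimed by an earlier element, then check that the resulting $\tau'_{\alpha_i}$ are order ideals. The only difference is that the paper constrains the enumeration of $M$ so that no $x^{\alpha_i}$ is a multiple of an earlier $x^{\alpha_j}$, which guarantees $1\in\tau'_{\alpha_i}$ for every $i$, whereas you allow an arbitrary enumeration and absorb the possible redundancy by letting some $\tau'_{\alpha_i}$ be empty; both are valid under the paper's definition of order ideal.
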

\begin{proof} Consider the set $M=\{x^{\alpha_1}, \dots, x^{\alpha_s}\}$ and suppose that its terms are ordered in such a way that none of the   $x^{\alpha_i} $ is multiple of a term with an index $<  i$. 

 First of all, set  $\tau'_{\alpha_1}:=\TT$ then  $\tau'_{\alpha_2}x^{\alpha_2}: =x^{\alpha_2}  \TT\setminus x^{\alpha_1}\TT$.  
 Notice that  $\tau'_{\alpha_2}$ is an order ideal (in particular $1\in \tau'_{\alpha_2}$) since $x^{\alpha_1}\nmid x^{\alpha_2}$.

By induction, after determining the multiplicative sets of the first $r$ terms of $M$,  set 
 $x^{\alpha_{r+1}}\tau'_{\alpha_{r+1}}:=x^{\alpha_{r+1}}  \TT \setminus \bigcup_{i=1}^r x^{\alpha_i}\tau'_{\alpha_{i}}$.   
\end{proof}

In the Gr\"obner case,  $\mathcal J$ has maximal cones and is noetherian, i.e.   coherent with a term order $\prec$ (Theorem \ref{REEVESTURMFELS}), then  every   substructure $\mathcal J'$ of $\mathcal J$ with disjoint cones is   noetherian,  coherent with $\prec$,  and confluent. 
  
  We prove now that  the well known criteria to check if a markes set is a basis that appear in the Gr\"obner  theory are sufficient also in a more general setting that only assume a proper subset of the above conditions.

In the last part of this section, we will study the properties of noetherian RSs  with disjoint cones, for which  the  following condition on the well-founded order $\Prec$  holds:
\begin{equation} \label{piuordinato} \forall x^\delta, x^{\delta'}, x^\varepsilon \in \TT: \  x^\delta \Succ x^{\delta'} \Rightarrow x^{\delta + \varepsilon} \Succ x^{\delta'+\varepsilon}\Succeq   x^{\varepsilon}.\end{equation}

This condition clearly holds  if $\mathcal J'$ is coherent with a term order $\prec$ and $\Prec$ is exactly this term ordering.

\begin{proposition}\label{comeBuch1}  Let $\mathcal J'$ be a noetherian RS with disjoint cones and suppose also that   \eqref{piuordinato} holds.

If $\mathcal F$  is a marked set over $\mathcal J'$ and $x^\beta $ is a term,  the following are equivalent:
\begin{enumerate}[i)]
\item \label{comeBuch1_i}  for each $x^\alpha \in  M$,   $x^\eta \notin \tau_\alpha$  s.t.  $x^{\eta+\alpha} \Prec  x^\beta$,  it holds  $  x^{\eta} f_{\alpha}\in \langle \tau'\mathcal F\rangle $ 
\item \label{comeBuch1_ii}  for each  $x^\alpha \in  M$,   $x^\eta \notin \tau_\alpha$  s.t.  $x^{\eta+\alpha} \Prec  x^\beta $,   $  x^{\eta} f_{\alpha}$ 
has a  $x^\beta-\SLR$.
 \item \label{comeBuch1_iii} for each S-polynomial  $S( f_{\alpha'},   f_{\alpha})$  s.t.   
$${\lcm(x^\alpha, x^{\alpha'})} \in \cone(x^{\alpha'})\setminus\cone(x^\alpha) \mbox{\ and } 
 \lcm(x^\alpha, x^{\alpha'})\Prec  x^\beta,$$   $S(f_{\alpha'}, f_{\alpha})$ has a $x^\beta-\SLR$.   
\item  \label{comeBuch1_iv}  in the same hypotheses as iii) it holds 
$S(f_{\alpha'}, f_{\alpha}) \rightarrow_{\mathcal F \mathcal J}^+ 0\downarrow.$
\end{enumerate} 
\end{proposition}
 
\begin{proof}  First of all we observe that in our hypotheses if $x^{\eta+\alpha} \Prec  x^\beta$ then also $x^{\delta} \Prec  x^\beta$ for every term $x^\delta \in \supp(x^\eta f_\alpha)$.

{\it \ref{comeBuch1_i})}    $\Leftrightarrow $  {\it \ref{comeBuch1_ii})} follows by Corollary \ref{sommasemplice}.

 {\it \ref{comeBuch1_ii})}    $\Rightarrow $  {\it \ref{comeBuch1_iii})}  Consider an element $x^\eta f_\alpha$ satisfying the conditions of   {\it \ref{comeBuch1_ii})}.
 Since   {\it \ref{comeBuch1_i})} holds, it has a $x^\beta-\SLR$; summing   $- x^{\eta'} f_{\alpha'}$ we get a   $x^\beta-\SLR$  of  $ x^{\eta} f_{\alpha}- x^{\eta'} f_{\alpha'}$. 

{\it \ref{comeBuch1_iii})}    $\Leftrightarrow $  {\it \ref{comeBuch1_iv})}  comes trivially from Corollary \ref{sommasemplice}.

 {\it \ref{comeBuch1_iii})} $+$  {\it \ref{comeBuch1_iv})}  $\Rightarrow $  {\it \ref{comeBuch1_ii})}  Suppose by contradiction that the assertion is false and that $x^\beta$ is a  term with $x^\beta$ minimal among the ones not satisfying the condition.  Consider $x^\eta f_\alpha$ with  $x^{\eta+\alpha}\Prec x^\beta$.  Notice that, by hypothesis,  the assertion is true in particular for   $x^{\beta'}:=x^{\eta+\alpha}$.

The assertion in {\it \ref{comeBuch1_ii})}  would immediately follow by {\it \ref{comeBuch1_iii})} if for the only $x^{\eta'} f_{\alpha'} $ s.t.  $x^{\eta+\alpha}=x^{\eta'+\alpha'}\in \cone(x^{\alpha'})$ one has $\lcm(x^\alpha, x^{\alpha'})=x^{\eta+\alpha} $.  So we must have  $\lcm(x^\alpha, x^{\alpha'})=x^{\varepsilon+\alpha} =x^{\varepsilon'+\alpha'}$ with $x^\varepsilon $ proper divisor of $x^\eta$.  We can then apply   {\it \ref{comeBuch1_iv})} getting  $x^\varepsilon f_\alpha-x^{\varepsilon'}f_{\alpha'}=S(f_{\alpha'},f_\alpha) \rightarrow_{\mathcal F \mathcal J}^+ 0\downarrow$.  Notice that by   \eqref{piuordinato} for each term $x^\gamma$ in the support of $x^\varepsilon f_\alpha$ and of $x^{\varepsilon'}f_{\alpha'}$ it holds $x^\gamma \Prec x^{\varepsilon+\alpha}$.  By Corollary \ref{sommasemplice}, we have then  $x^\varepsilon f_\alpha-x^{\varepsilon'}f_{\alpha'}$ has a $x^{\varepsilon+\alpha}-\SLR$, i.e.  $x^\varepsilon f_\alpha-x^{\varepsilon'}f_{\alpha'}=\sum c_ix^{\gamma_i}f_{\alpha_i}$  with $x^{\gamma_i+\alpha_i}\Prec x^{\varepsilon+\alpha}$.  Multiply this representation by $x^{\eta-\varepsilon}$. For each summand  $x^{\eta-\varepsilon+\gamma_i}f_{\alpha_i}$ it holds $x^{\eta-\varepsilon+\gamma_i+\alpha_i}\Prec x^{\eta-\varepsilon+\varepsilon+\alpha}=x^{\eta+\alpha}$.  By the assumption on the truth of our assertion with   $x^{\beta'}=x^{\eta+\alpha}$, each polynomial $x^{\eta-\varepsilon+\gamma_i}f_{\alpha_i}$  has a $x^{\eta+\alpha}-\SLR$.  We then get a  $x^{\eta+\alpha}-\SLR$  of  $x^\eta f_{\alpha}-x^{\eta'}f_{\alpha'}$ so, summing to the two members  $x^{\eta'}f_{\alpha'}$ we get a  $x^{\eta+\alpha}-\LR$  of  $x^{\eta}f_{\alpha}$ since  $x^{\eta'+\alpha'}\in \tau_{\alpha'}$ . We conclude noticing that by hypothesis  $x^{\eta+\alpha}\Prec x^\beta$.
    
\end{proof}
By the previous results and by Theorems \ref{sommadiretta}  and \ref{caratterizzazioneconfluenza} follows

\begin{corollary}\label{comeBuch}  Let $\mathcal J'$ be a noetherian RS with disjoint cones and order $\Prec$. Suppose  that \eqref{piuordinato} holds. 
Then for a marked set $\mathcal F$ over $\mathcal J'$  the following are equivalent:
\begin{enumerate}[i)]
\item $\mathcal F$ is a marked basis
 \item $\forall  x^{\alpha},  x^{\alpha'} \in M$  s.t.  $\lcm(x^\alpha, x^{\alpha'}) \in \cone(x^{\alpha'})$  it holds  $ S( f_{\alpha'},  f_{\alpha}) \rightarrow_{\mathcal F \mathcal J'}^+ 0\downarrow$
 \item $\forall  x^{\alpha},  x^{\alpha'} \in M$  s.t.  $x^{\gamma+\alpha}=\lcm(x^\alpha, x^{\alpha'}) \in \cone(x^{\alpha'})$  it holds  $ x^\gamma f_{\alpha} \rightarrow_{\mathcal F \mathcal J'}^+ 0\downarrow$.    
\end{enumerate}
\end{corollary}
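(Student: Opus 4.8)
The plan is to derive the three equivalences from Proposition~\ref{comeBuch1} together with Theorems~\ref{sommadiretta} and~\ref{caratterizzazioneconfluenza}, the only real work being the passage from the ``local'' formulation of Proposition~\ref{comeBuch1}, which is parametrised by an auxiliary term $x^\beta$, to the unrestricted statement here, plus the matching-up of the relevant $S$-polynomials. First I would record the structural consequences of disjointness: by Proposition~\ref{completezza} the RS $\mathcal J'$ is noetherian and confluent, and it has no proper substructure, so in Theorem~\ref{sommadiretta} one may take $\mathcal J=\mathcal J'$ and $\overline{\tau}=\tau'=\tau$. Hence $\xrightarrow{\mathcal F \mathcal J'}$ is automatically confluent and, by Theorem~\ref{sommadiretta}, $\mathcal F$ is a marked basis if and only if $(\mathcal F)=\langle\tau\mathcal F\rangle$; equivalently, by its condition~(\ref{sommadiretta_vi}) and Corollary~\ref{sommasemplice}, if and only if $x^\gamma f_\alpha\in\langle\tau\mathcal F\rangle$ (equivalently $x^\gamma f_\alpha\xrightarrow{\mathcal F \mathcal J'}_\star 0$) for every $x^\alpha\in M$ and every $x^\gamma\notin\tau_\alpha$.

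For the implication i)$\Rightarrow$ii) I would note that a marked basis satisfies $(\mathcal F)=\langle\tau\mathcal F\rangle$, so each $S(f_{\alpha'},f_\alpha)$ lies in $\langle\tau\mathcal F\rangle$; as $\langle\tau\mathcal F\rangle\cap\langle\cN(J)\rangle=0$ by Corollary~\ref{sommasemplice}, its unique $J$-remainder must be $0$. The equivalence ii)$\iff$iii) is formal: when $x^{\gamma+\alpha}=\lcm(x^\alpha,x^{\alpha'})\in\cone(x^{\alpha'})$ I would write $S(f_{\alpha'},f_\alpha)=x^\gamma f_\alpha-x^{\gamma'}f_{\alpha'}$ with $x^{\gamma'}:=\lcm(x^\alpha,x^{\alpha'})/x^{\alpha'}\in\tau_{\alpha'}$, so that $x^{\gamma'}f_{\alpha'}$ already reduces to $0$ in a single base step; thus $S(f_{\alpha'},f_\alpha)$ and $x^\gamma f_\alpha$ differ by an element of $\langle\tau\mathcal F\rangle$ and, by Corollary~\ref{sommasemplice}, one reduces to $0$ exactly when the other does.

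The substantial step is ii)$\Rightarrow$i), where I would invoke Proposition~\ref{comeBuch1}. The bridge is the following disjointness remark: for $x^\alpha\neq x^{\alpha'}$ the condition $\lcm(x^\alpha,x^{\alpha'})\in\cone(x^{\alpha'})$ is equivalent to ``$\lcm(x^\alpha,x^{\alpha'})/x^{\alpha'}\in\tau_{\alpha'}$ and $\lcm(x^\alpha,x^{\alpha'})/x^{\alpha}\notin\tau_{\alpha}$'', since if the second quotient were in $\tau_\alpha$ the lcm would lie in $\cone(x^\alpha)\cap\cone(x^{\alpha'})=\emptyset$; for $x^\alpha=x^{\alpha'}$ the $S$-polynomial vanishes. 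Hence condition ii) is exactly the hypothesis of Proposition~\ref{comeBuch1}, condition~(\ref{comeBuch1_iv}), for every auxiliary term $x^\beta$, and the implication (\ref{comeBuch1_iv}$\Rightarrow$\ref{comeBuch1_i}) gives, for each $x^\beta$, that $x^\eta f_\alpha\in\langle\tau\mathcal F\rangle$ for every $x^\eta\notin\tau_\alpha$ with $\varphi(x^{\eta+\alpha})\Prec\varphi(x^\beta)$. Letting $x^\beta$ range over $\TT$ I would conclude that $x^\eta f_\alpha\in\langle\tau\mathcal F\rangle$ for all $x^\eta\notin\tau_\alpha$, which by the first paragraph means $\mathcal F$ is a marked basis.

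I expect the genuine obstacle to be precisely this final quantifier passage. Proposition~\ref{comeBuch1} only controls terms $x^\eta$ with $\varphi(x^{\eta+\alpha})$ strictly below a chosen $\varphi(x^\beta)$, so to reach an arbitrary $x^\eta\notin\tau_\alpha$ I must produce $x^\beta$ with $\varphi(x^{\eta+\alpha})\Prec\varphi(x^\beta)$. I would secure this from \eqref{piuordinato}: picking any $x^\delta,x^{\delta'}$ with $\varphi(x^\delta)\Succ\varphi(x^{\delta'})$ and multiplying by $x^{\eta+\alpha}$ yields $\varphi(x^{\delta+\eta+\alpha})\Succ\varphi(x^{\delta'+\eta+\alpha})\Succeq\varphi(x^{\eta+\alpha})$, so $\varphi(\TT)$ has no maximal value and such an $x^\beta$ always exists (the degenerate case where $\varphi$ is constant forces every $\lambda_\alpha$ empty, whence $\mathcal F$ is monomial and trivially a marked basis). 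Equivalently, one may avoid the auxiliary term altogether and run the argument as a noetherian induction on $\varphi(x^{\eta+\alpha})$ in the well-founded set $(W,\Prec)$ --- which is just the induction already internal to the proof of Proposition~\ref{comeBuch1}. All remaining steps, namely translating membership in $\langle\tau\mathcal F\rangle$ into reduction to $0$, are routine through Corollary~\ref{sommasemplice}.
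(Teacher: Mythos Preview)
Your proposal is correct and follows the same route the paper intends: the paper's own proof is the single sentence ``By the previous results and by Theorems~\ref{sommadiretta} and~\ref{caratterizzazioneconfluenza} follows'', and you have simply unpacked how Proposition~\ref{comeBuch1} combines with those theorems (via Proposition~\ref{completezza} and Corollary~\ref{sommasemplice}) to give the three equivalences. Your identification and resolution of the one genuine subtlety---producing, for each $x^\eta\notin\tau_\alpha$, an $x^\beta$ with $\varphi(x^{\eta+\alpha})\Prec\varphi(x^\beta)$ via \eqref{piuordinato}, or equivalently running a noetherian induction---fills in precisely the detail the paper leaves implicit.
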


For such RSs we can improve the characterization of marked bases given in Corollary   \ref{comeBuch} similarly to what done for  Gr\"obner bases.  
We can verify that also in this context some of the known simplifications hold.

The ``strategy'' presented here  exploits a substructure of $\mathcal{J}$ with disjoint cones. Such a structure is inspired by (and generalizes) Gebauer-Moeller's \emph{Staggered linear bases}.

\section{Criteria}\label{Criteriasec}

Throughout this section, for notation simplicity, we will assume that the finite set $M$ is  enumerated as
$\{x^{\alpha_1},\ldots,x^{\alpha_s}\}$ and we will relabel each element $f_{\alpha_i}$ in the related marked set  
$$\mathcal{F}=\{f_\alpha\}_{x^\alpha\in M} = \{f_{\alpha_i}, 1\leq i \leq s\}$$
as $f_i := f_{\alpha_i},1\leq i \leq s$.

\def\then{\;\Longrightarrow\;}
We will further assume to have performed the construction outlines in Lemma~\ref{sottoconidisgiunti}; in particular we have 
$$\tau'_{\alpha_1} = \TT \mbox{ and } x^{\alpha_{r+1}}\tau'_{\alpha_{r+1}}:=x^{\alpha_{r+1}}  \TT \setminus \bigcup_{i=1}^r x^{\alpha_i}\tau'_{\alpha_{i}} \mbox{ for all } i;$$
Further we will assume that the elements of $M$ are ordered so that
\begin{equation}\label{EqTeo}
x^{\alpha_i}\mid x^{\alpha_j}\then i < j.
\end{equation} 
 
We moreover denote
\begin{itemize}
\item for each $i, 1\leq i \leq s, {\bf T}(i) :=x^{\alpha_i}$,  
\item for each  $i,j, 1\leq i \leq s$, $${\bf T}(i,j):=\lcm({\bf T}(f_i),{\bf T}(f_j)) =
\lcm(x^{\alpha_i},x^{\alpha_j})$$
and 
\item $S(i,j) := S(f_i,f_j) = {\frac{{\bf T}(i,j)}{{\bf T}(j)}} f_j - {\frac{{\bf T}(i,j)}{{\bf T}(i)}} f_i $;
\item for each $i,j,k : 1 \leq i , j , k \leq s$, $${\bf T}(i,j,k):=\lcm({\bf T}(f_i),{\bf T}(f_j),{\bf T}(f_k)) =
\lcm(x^{\alpha_i},x^{\alpha_j},x^{\alpha_k}).$$
\end{itemize}
\begin{lemma}[M\"oller] \cite{M}\label{SyzRel}
 For each $i,j,k : 1 \leq i , j , k \leq s$ it holds
$$\frac{{\bf T}(i,j,k)}{{\bf T}(i,k)} S(i,k) - \frac{{\bf T}(i,j,k)}{{\bf T}(i,j)} S(i,j) + 
\frac{{\bf T}(i,j,k)}{{\bf T}(k,j)} S(k,j) 
= 0.$$ 
\end{lemma}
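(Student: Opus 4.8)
The plan is to prove the identity by direct expansion, since it asserts nothing more than that a specific $A$-linear combination of the three S-polynomials $S(i,k)$, $S(i,j)$, $S(k,j)$ telescopes to zero. Writing $L := {\bf T}(i,j,k) = \lcm(x^{\alpha_i},x^{\alpha_j},x^{\alpha_k})$ for brevity, I would first record the two divisibility facts that make every coefficient occurring in the statement a genuine term of $\TT$. First, each pairwise lcm divides the triple lcm, so that ${\bf T}(i,k)$, ${\bf T}(i,j)$, ${\bf T}(k,j)$ all divide $L$ and the quotients $L/{\bf T}(i,k)$, $L/{\bf T}(i,j)$, $L/{\bf T}(k,j)$ lie in $\TT$. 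Second, each head ${\bf T}(\ell)=x^{\alpha_\ell}$ divides the pairwise lcm in which it participates, so that the quotients such as $\tfrac{{\bf T}(i,k)}{{\bf T}(i)}$ appearing inside each $S(\cdot,\cdot)$ are themselves terms. Both facts are immediate from the definition of $\lcm$.

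Next I would substitute the definition $S(i,j) = \tfrac{{\bf T}(i,j)}{{\bf T}(j)} f_j - \tfrac{{\bf T}(i,j)}{{\bf T}(i)} f_i$, and likewise for $S(i,k)$ and $S(k,j)$, into the left-hand side and simplify each product by cancelling the pairwise lcm against its prefactor. The key telescoping is that $\tfrac{L}{{\bf T}(i,k)}\cdot\tfrac{{\bf T}(i,k)}{{\bf T}(\ell)} = \tfrac{L}{{\bf T}(\ell)}$ for the relevant head ${\bf T}(\ell)$, which is legitimate precisely because of the divisibilities just noted. This yields
$$\frac{L}{{\bf T}(i,k)} S(i,k) = \frac{L}{{\bf T}(k)} f_k - \frac{L}{{\bf T}(i)} f_i, \qquad \frac{L}{{\bf T}(i,j)} S(i,j) = \frac{L}{{\bf T}(j)} f_j - \frac{L}{{\bf T}(i)} f_i,$$
together with $\tfrac{L}{{\bf T}(k,j)} S(k,j) = \tfrac{L}{{\bf T}(j)} f_j - \tfrac{L}{{\bf T}(k)} f_k$.

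Finally I would form the signed combination (plus, minus, plus, exactly as in the statement) and collect the coefficients of $f_i$, $f_j$, $f_k$ separately: the coefficient of $f_i$ is $-\tfrac{L}{{\bf T}(i)} + \tfrac{L}{{\bf T}(i)} = 0$, that of $f_j$ is $-\tfrac{L}{{\bf T}(j)} + \tfrac{L}{{\bf T}(j)} = 0$, and that of $f_k$ is $\tfrac{L}{{\bf T}(k)} - \tfrac{L}{{\bf T}(k)} = 0$, so the whole expression vanishes. I do not expect any genuine obstacle; the only point requiring care is the bookkeeping of signs together with the order of the arguments in $S(\cdot,\cdot)$ (recall that $S(i,j)$ carries $f_j$ with a plus sign and $f_i$ with a minus sign), since a single transposed index would destroy the cancellation. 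It is worth remarking that the identity is purely formal: it uses only that each $f_\ell$ is monic with head $x^{\alpha_\ell}$, and it holds over any coefficient ring, independently of the ordering assumptions placed on $M$.
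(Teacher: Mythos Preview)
Your argument is correct: the identity is a purely formal telescoping, and your expansion of each scaled S-polynomial into $\tfrac{L}{{\bf T}(\ell)}f_\ell$ terms, followed by sign bookkeeping, establishes it without difficulty. Note that the paper itself does not prove this lemma at all---it is quoted as M\"oller's result with a citation to \cite{M}---so there is no ``paper's own proof'' to compare against; your direct verification is exactly the standard one.
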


Buchberger test/completion result states  that a basis (in our language: a marked set) $\mathcal{F}$ is Gr\"obner  (in our language: a marked basis) 
if and only if each 
S-polynomial $S(i,j), i,j, 1\leq i \leq s,$ between two elements of $\mathcal{F}$, reduces to 0 and gave two criteria \cite{BCrit} to detect S-pairs which are ``useless'' in the sense that theoretical results prove that they reduce to 0, thus making useless the normal form computation.
The First Criterion (Propostion~\ref{comeBuchMigli1}) is based on a direct reformulation of trivial syzygies, the Second is a direct application Lemma~\ref{SyzRel}.

We remark that the test/completion result given by  Proposition~\ref{comeBuch1}.iv)  allow to remove many useless S-pairs.

In fact, an S-polynomial $S(i, j)$ is not to be tested, and thus considered ``useless'', if 
${\bf T}(i,j)\notin\cone({\bf T}(i))\cup\cone({\bf T}(j))$.

\begin{example}\label{XaX1} Let us consider
$M:=\{x^{\alpha_i} : 1\leq i \leq 3\}$ with
\begin{itemize}
\item $x^{\alpha_1}={\bf T}(1)=xy, \tau_{\alpha_1}=\TT$, 
\item $x^{\alpha_2}={\bf T}(2)=y^2, \tau_{\alpha_2}=\{y^i :i\in{\mathbb N}\}$, 
\item $x^{\alpha_3}={\bf T}(3)=x^2, \tau_{\alpha_3}=\{x^i :i\in{\mathbb N}\}$
\end{itemize}
and remark that
$$S(2,3)=yS(1,3)-xS(1,2).$$

Note that 
$${\bf T}(2,3)=x^2y^2\notin\cone({\bf T}(2))\cup\cone({\bf T}(3))=\{y^{i+2} :i\in{\mathbb N}\}\cup\{x^{i+2} :i\in{\mathbb N}\}$$ while
$$\frac{{\bf T}(1,2)}{{\bf T}(1)}=y\in\tau_{\alpha_1}=\TT\ni y
\frac{{\bf T}(1,3)}{{\bf T}(1)}$$
so we detect the ``useless'' pair $S(2,3)$.
\end{example}

Naturally, we can prove in our setting  Buchberger Second Criterion; we also can prove Buchberger First Criterion

\begin{proposition}\label{comeBuchMigli1} \cite{BCrit} (Buchberger First Criterion)       Under the hypotheses of  Corollary \ref{comeBuch}  for $\mathcal F$ being a marked basis it is not necessary to check whether the 

 $S$-polynomials     $S(f_{\alpha'},f_\alpha)$ s.t. $\lcm(x^\alpha, x^{\alpha'})=x^{\alpha+\alpha'}$ reduce to $0$.
\end{proposition}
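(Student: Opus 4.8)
The plan is to reduce the statement to a single algebraic identity forced by coprimality and then to feed the resulting representation into the strictly--lower--representation machinery of Proposition~\ref{comeBuch1} and Corollary~\ref{sommasemplice}. Write the two marked polynomials involved as $f_\alpha=x^\alpha+t_\alpha$ and $f_{\alpha'}=x^{\alpha'}+t_{\alpha'}$, where $t_\alpha=f_\alpha-x^\alpha$ and $t_{\alpha'}=f_{\alpha'}-x^{\alpha'}$ are the tails, so that $\supp(t_\alpha)\subseteq\lambda_\alpha$ and $\supp(t_{\alpha'})\subseteq\lambda_{\alpha'}$. The hypothesis $\lcm(x^\alpha,x^{\alpha'})=x^{\alpha+\alpha'}$ means precisely that $\frac{\lcm(x^\alpha,x^{\alpha'})}{x^\alpha}=x^{\alpha'}$ and $\frac{\lcm(x^\alpha,x^{\alpha'})}{x^{\alpha'}}=x^\alpha$, whence
\[
S(f_{\alpha'},f_\alpha)=x^{\alpha'}f_\alpha-x^\alpha f_{\alpha'}=(f_\alpha-x^\alpha)f_{\alpha'}-(f_{\alpha'}-x^{\alpha'})f_\alpha=t_\alpha f_{\alpha'}-t_{\alpha'}f_\alpha ,
\]
the cross term $x^{\alpha+\alpha'}$ cancelling exactly because the heads are coprime. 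This is the step where coprimality is indispensable: for a non-coprime pair the quotients $\frac{\lcm}{x^\alpha}$ and $\frac{\lcm}{x^{\alpha'}}$ are proper, the cancellation fails, and $S$ is no longer a combination of the tails alone.

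Next I would estimate the ordering function on the right-hand side. For each $x^\gamma\in\supp(t_\alpha)\subseteq\lambda_\alpha$, taking $x^\eta=1\in\tau'_\alpha$ in Definition~\ref{SRO} yields $\varphi(x^\alpha)\Succ\varphi(x^\gamma)$; multiplying through by $x^{\alpha'}$ and invoking the monotonicity property \eqref{piuordinato} gives $\varphi(x^{\alpha+\alpha'})\Succ\varphi(x^{\gamma+\alpha'})$. Hence each summand $x^\gamma f_{\alpha'}$ occurring in $t_\alpha f_{\alpha'}$ has head $x^{\gamma+\alpha'}$ with $\varphi(x^{\gamma+\alpha'})\Prec\varphi(x^{\alpha+\alpha'})=\varphi(\lcm(x^\alpha,x^{\alpha'}))$, and symmetrically every $x^{\gamma'}f_\alpha$ in $t_{\alpha'}f_\alpha$ has head strictly $\varphi$--below $\varphi(\lcm(x^\alpha,x^{\alpha'}))$. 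Thus the identity above displays $S(f_{\alpha'},f_\alpha)$ as a $\varphi(\lcm(x^\alpha,x^{\alpha'}))$--strictly lower representation.

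To conclude, a $\varphi(\lcm(x^\alpha,x^{\alpha'}))$--\SLR\ forces $S(f_{\alpha'},f_\alpha)\xrightarrow{\mathcal F\mathcal J'}_\star 0$, which is the content of the equivalence between strictly lower representations and reduction to $0$ recorded in Proposition~\ref{comeBuch1}, (iii)$\Leftrightarrow$(iv), together with Corollary~\ref{sommasemplice}; crucially this uses no assumption on $\mathcal F$ beyond the standing hypotheses of Corollary~\ref{comeBuch}. Consequently such $S$--polynomials reduce to $0$ automatically and may be omitted from the finite test of Corollary~\ref{comeBuch}, which is exactly the assertion.

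I expect the main obstacle to be the bookkeeping of multiplicative sets. The representation $t_\alpha f_{\alpha'}-t_{\alpha'}f_\alpha$ uses as multipliers the terms of $\supp(t_\alpha)$ and $\supp(t_{\alpha'})$, which need not lie in $\tau'_{\alpha'}$ and $\tau'_\alpha$, so a priori it is a representation over the maximal--cone structure $\mathcal J$ rather than over the disjoint--cone substructure $\mathcal J'$. To land the reduction in $\mathcal J'$ one must either invoke the inductive conversion performed inside the proof of Proposition~\ref{comeBuch1}, which transforms strictly lower representations over $\tau\mathcal F$ into $\mathcal J'$--reductions, or first observe that $S(f_{\alpha'},f_\alpha)\in\langle\tau\mathcal F\rangle$ with all terms $\varphi$--below $\varphi(\lcm(x^\alpha,x^{\alpha'}))$ and transport the conclusion through Corollary~\ref{sommasemplice}. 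Checking that the strict $\varphi$--bounds survive this passage from $\mathcal J$ to $\mathcal J'$ is the only genuinely delicate point; the remaining ingredients are the routine identity and the two applications of \eqref{piuordinato}.
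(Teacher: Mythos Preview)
Your approach is essentially the paper's: the same coprimality identity $S(f_{\alpha'},f_\alpha)=t_\alpha f_{\alpha'}-t_{\alpha'}f_\alpha$ and the same use of \eqref{piuordinato} to push all heads strictly below $\varphi(x^{\alpha+\alpha'})$. Where you and the paper part company is precisely the point you flag in your last paragraph, and that paragraph, not the middle one, is the correct argument.

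The middle paragraph overclaims. The expression $t_\alpha f_{\alpha'}-t_{\alpha'}f_\alpha$ is \emph{not} a $\varphi(\lcm)$--\SLR\ in the sense of Definition~\ref{representation}: an \SLR\ must be a representation \emph{by $\tau'\mathcal F$}, i.e.\ each multiplier $x^\gamma$ of $f_{\alpha'}$ must lie in $\tau'_{\alpha'}$, and there is no reason the tail terms of $f_\alpha$ do. Consequently neither Corollary~\ref{sommasemplice} nor the equivalence (iii)$\Leftrightarrow$(iv) of Proposition~\ref{comeBuch1} applies to this representation directly, and the sentence ``crucially this uses no assumption on $\mathcal F$'' is not justified. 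What you actually need is the conditional statement: \emph{if} the remaining checks in Corollary~\ref{comeBuch} with $\varphi$--value below $\varphi(x^{\alpha+\alpha'})$ all pass, \emph{then} the coprime one does too.

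The paper makes this explicit. It applies Proposition~\ref{comeBuch1} with $x^\beta=x^{\alpha+\alpha'}$: either some smaller S-polynomial check already fails (so $\mathcal F$ is not a basis and the coprime check is moot), or condition (iv) holds, hence (i) holds, so \emph{every} $x^\delta f_{\alpha''}$ with $\varphi(x^{\delta+\alpha''})\Prec\varphi(x^{\alpha+\alpha'})$ lies in $\langle\tau'\mathcal F\rangle$. Now each summand $x^\gamma f_{\alpha'}$ in $t_\alpha f_{\alpha'}$ has exactly this property (by your \eqref{piuordinato} estimate), so it lies in $\langle\tau'\mathcal F\rangle$; symmetrically for $t_{\alpha'}f_\alpha$; hence $S\in\langle\tau'\mathcal F\rangle$ and $S\xrightarrow{\mathcal F\mathcal J'}_\star 0$. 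This is precisely the ``inductive conversion'' you gesture at; make it the argument rather than an afterthought.
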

\begin{proof}   Suppose $\lcm(x^\alpha, x^{\alpha'})=x^{\alpha+\alpha'}$.  Apply Proposition \ref{comeBuch1}  choosing  $x^\beta=x^{\alpha+\alpha'}$. If some of the requested controls is negative, $\mathcal F$ is not a marked basis and we can conclude it without using $S(f_{\alpha'},f_\alpha)$.  Otherwise,    all the polynomials  $x^\epsilon  f_{\alpha''}$ with   $x^{\epsilon +\alpha''}\Prec x^{\alpha'+\alpha}$ belong to  $\langle \tau \mathcal F \rangle $.

Denoted $f_\alpha=x^\alpha -g_\alpha$ and $f_{\alpha'}=x^{\alpha'} -g_{\alpha'}$, it holds $x^{\alpha'} f_{\alpha}- x^{\alpha} f_{\alpha'}= g_{\alpha'} f_{\alpha}- g_{\alpha} f_{\alpha'}$. 
By definition of ordered RS, all the terms $x^\delta$ in the support of  $g_\alpha$ are s.t. $x^\delta\Prec x^\alpha$, so by \eqref{piuordinato} we have  $x^{\delta+\alpha'}\Prec x^{\alpha+\alpha'}$. Then $g_{\alpha} f_{\alpha'}\in  \langle \tau \mathcal F \rangle $. Similarly we get   $g_{\alpha} f_{\alpha'}\in \langle \tau \mathcal F \rangle $ and we conclude that their difference $S(f_{\alpha'},f_\alpha)$ is in $\langle \tau \mathcal F \rangle $.
\end{proof}

Differently from Gr\"obner bases, it is not always true that the S-polynomial of two polynomials with coprime heads reduces to $0$.

 \begin{example}\label{NoCoprime1}
Consider the RS with $M=\{x,y,xz\}\subset \PP=A[x,y,z]$,   $\tau_{x}=\TT[x,y]$, $\tau_{y}= \tau_{xz}=\TT $. 
Take 
 $\mathcal{F}=\{f_{x}=x, f_{y}=y-z,f_{xz}=xz-z^2 \}$. We will have then $yf_x, xf_y\in\tau  \mathcal F $, but the only reduction of the S-polynomial $S(f_{y }, f_{x})=yf_x-xf_y=xz  \rightarrow_{\mathcal F \mathcal J} z^2 \downarrow $ (by means of $f_{xz}$) does not produce $0$.
 The point, of course, is that (\ref{piuordinato}) is not satisfied
\end{example}

\begin{proposition}  \label{comeBuchMigli2}\cite{BCrit}  (Buchberger  Second Criterion)  Under the hypotheses of  Corollary \ref{comeBuch},  for $\mathcal F$ being a marked basis it is not necessary to control that 
 $S(f_{\alpha'}, f_{\alpha''})$  reduces to $0$   if we already checked   $S(f_{\alpha'},f_\alpha)$ and $S(f_{\alpha''},f_\alpha)$, and  $x^{\alpha}\mid \lcm(x^{\alpha'}, x^{\alpha''})$.

\end{proposition}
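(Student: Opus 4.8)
The plan is to reduce $S(f_{\alpha'},f_{\alpha''})$ to the two S-polynomials already examined by means of M\"oller's syzygy relation (Lemma~\ref{SyzRel}), exactly as in the classical Gr\"obner setting, and then to transport that relation into a statement about strictly lower representations. Write $x^{\alpha}=x^{\alpha_j}$, $x^{\alpha'}=x^{\alpha_i}$, $x^{\alpha''}=x^{\alpha_k}$ and set $t:=\lcm(x^{\alpha'},x^{\alpha''})$. The hypothesis $x^{\alpha}\mid t$ forces ${\bf T}(i,j,k)=\lcm(x^{\alpha'},x^{\alpha''},x^{\alpha})=t={\bf T}(i,k)$, so the coefficient of $S(i,k)$ in Lemma~\ref{SyzRel} equals $1$ and the relation becomes
$$S(f_{\alpha'},f_{\alpha''})=\frac{t}{\lcm(x^{\alpha'},x^{\alpha})}\,S(f_{\alpha'},f_{\alpha})-\frac{t}{\lcm(x^{\alpha''},x^{\alpha})}\,S(f_{\alpha''},f_{\alpha}).$$
Since $x^{\alpha}\mid t$, both $\lcm(x^{\alpha'},x^{\alpha})$ and $\lcm(x^{\alpha''},x^{\alpha})$ divide $t$, so the two cofactors $x^{\rho}:=t/\lcm(x^{\alpha'},x^{\alpha})$ and $x^{\rho'}:=t/\lcm(x^{\alpha''},x^{\alpha})$ are genuine terms.

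I would then follow the induction scheme of Proposition~\ref{comeBuchMigli1} and apply Proposition~\ref{comeBuch1} with $x^\beta:=t$. If one of the controls it requests fails, then $\mathcal F$ is already known not to be a marked basis, and this is detected without ever reducing $S(f_{\alpha'},f_{\alpha''})$; so assume instead that all those controls pass. Then Proposition~\ref{comeBuch1}.\ref{comeBuch1_ii}) guarantees that every $x^\eta f_{\alpha_\ell}$ with $\varphi(x^{\eta+\alpha_\ell})\Prec\varphi(t)$ admits a $\varphi(t)$-$\SLR$. On the other hand, $S(f_{\alpha'},f_{\alpha})$ and $S(f_{\alpha''},f_{\alpha})$ reduce to $0$ by hypothesis, so by Corollary~\ref{sommasemplice} each of them has a representation $\sum_\ell c_\ell x^{\gamma_\ell}f_{\alpha_\ell}$ by $\tau\mathcal F$ whose heads satisfy $\varphi(x^{\gamma_\ell+\alpha_\ell})\Prec\varphi(\lcm(x^{\alpha'},x^{\alpha}))$ and $\varphi(x^{\gamma_\ell+\alpha_\ell})\Prec\varphi(\lcm(x^{\alpha''},x^{\alpha}))$ respectively (the leading terms of the two S-polynomials cancel).

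Next I would multiply these two representations by $x^{\rho}$ and $x^{\rho'}$. The key estimate is that every resulting summand has head strictly below $\varphi(t)$: from $\varphi(x^{\gamma_\ell+\alpha_\ell})\Prec\varphi(\lcm(x^{\alpha'},x^{\alpha}))$ and $x^{\rho}\lcm(x^{\alpha'},x^{\alpha})=t$, condition \eqref{piuordinato} yields $\varphi(x^{\rho+\gamma_\ell+\alpha_\ell})\Prec\varphi(t)$, and likewise for $x^{\rho'}$. This is the single place where the monotonicity hypothesis \eqref{piuordinato} is indispensable; Example~\ref{NoCoprime1} shows that without it the whole mechanism collapses.

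Having pushed every summand below the threshold $\varphi(t)$, I would invoke the ``otherwise'' clause above: each $x^{\rho+\gamma_\ell}f_{\alpha_\ell}$ -- whether or not $x^{\rho+\gamma_\ell}$ still lies in $\tau_{\alpha_\ell}$ -- has a $\varphi(t)$-$\SLR$ by Proposition~\ref{comeBuch1}.\ref{comeBuch1_ii}). Substituting these $\SLR$s into the M\"oller relation produces a $\varphi(t)$-$\SLR$ of $S(f_{\alpha'},f_{\alpha''})$, whence $S(f_{\alpha'},f_{\alpha''})\xrightarrow{\mathcal F\mathcal J'}_\star 0$ by Corollary~\ref{sommasemplice}, so its check was superfluous. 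I expect the main obstacle to be exactly the bookkeeping just flagged: multiplying a $\tau\mathcal F$-representation by a cofactor may carry the factors $x^{\rho+\gamma_\ell}$ out of the multiplicative sets $\tau_{\alpha_\ell}$ (order ideals are closed under division, not multiplication), so one cannot read off a $\tau\mathcal F$-representation directly; the induction packaged in Proposition~\ref{comeBuch1}.\ref{comeBuch1_ii}), combined with the descent to strictly smaller $\varphi$-values granted by \eqref{piuordinato}, is precisely what repairs this.
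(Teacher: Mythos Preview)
Your proposal is correct and follows essentially the same route as the paper's own proof: both apply M\"oller's relation (Lemma~\ref{SyzRel}) to express $S(f_{\alpha'},f_{\alpha''})$ in terms of the two checked S-polynomials, invoke Proposition~\ref{comeBuch1} with $x^\beta=\lcm(x^{\alpha'},x^{\alpha''})$ to cover the ``some control fails'' case, obtain $\SLR$s of the two checked S-polynomials, multiply them by the cofactors, and use \eqref{piuordinato} to keep all heads strictly below $\varphi(t)$ so that each resulting summand lies in $\langle\tau'\mathcal F\rangle$. Your explicit acknowledgment that the cofactor multiplication may push $x^{\rho+\gamma_\ell}$ out of $\tau_{\alpha_\ell}$, and that Proposition~\ref{comeBuch1}.\ref{comeBuch1_ii}) repairs this, makes transparent a step the paper handles more tersely.
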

\begin{proof}  By hypothesis and Lemma~\ref{SyzRel} $S(f_{\alpha'}, f_{\alpha''})=x^{\varepsilon'}S(f_{\alpha'},f_\alpha)-x^{\varepsilon''}S(f_{\alpha''},f_\alpha)$  for some $x^{\varepsilon'}, x^{\varepsilon''}\in \TT$.  
Apply  Proposition \ref{comeBuch1}  choosing  $x^\beta=\lcm(x^{\alpha'}, x^{\alpha''})$. If some of the requested controls is negative, $\mathcal F$ is not a marked basis and we can conclude it without using   $S(f_{\alpha'}, f_{\alpha''})$.
Otherwise,   we know that all the polynomials  
$x^\epsilon  f_{\gamma}$  with  $x^{\epsilon +\gamma}\Prec x^{\alpha'+\alpha}$  are in $\langle \tau \mathcal F \rangle $. 
By hypothesis we also know that  $S(f_{\alpha'},f_\alpha) \in \langle \tau \mathcal F\rangle$; so we can write it by a 
$\lcm(x^{\alpha+\alpha'})$-\SLR\ since for each term $x^\delta$  in the support of $S(f_{\alpha'},f_\alpha)$ one has 
$x^\delta \Prec \lcm(x^{\alpha},x^{\alpha'})$. Then, multiplying the summands  $x^{\eta_i}f_{\alpha_i}$ of this representation by   $x^{\varepsilon'}$,  we get polynomials $ x^{\varepsilon'+\eta_i}f_{\alpha_i}$   belonging to $\langle \tau'\mathcal F\rangle$ since     $x^{\varepsilon'+\eta_i+\alpha_i}\Prec x^{\varepsilon'}\lcm(x^\alpha,x^{\alpha'})=\lcm(x^{\alpha'},x^{\alpha''})$.

  Then $x^{\varepsilon'}S(f_{\alpha'},f_\alpha)$ is in  $\langle \tau'\mathcal F\rangle$. Similarly we can obtain that $x^{\varepsilon''}S(f_{\alpha''},f_\alpha)$ is in $\langle \tau'\mathcal F\rangle$ and we conclude.

\end{proof}

Let us now enumerate the set of all S-pairs by a well-founded order  $\prec$ which preserves  divisibility:
\def\then{\;\Longrightarrow\;}

\begin{equation}\label{Eqprec}
{\bf T}(i_1,j_1) \mid {\bf T}(i_2,j_2) \neq {\bf T}(i_1,j_1)\then (i_1,j_1) \prec (i_2,j_2)
\end{equation}
\begin{corollary}[Buchberger]\cite{BCrit}\cite[II.Lemma~25.1.3]{SPES}\label{GMCriCor}
Let $$\mathfrak{B}  \subset \{\{i,j\}, 1\leq i < j \leq s\}$$ be such that
for each $\{i,j\}, 1\leq i < j \leq s$, either
\begin{itemize}
\item ${\bf T}(i,j) =  {\bf T}(i) {\bf T}(j)$ or
\item there is $k, 1\leq k \leq s$ such that
\begin{itemize}
\item ${\bf T}(k)\mid {\bf T}(i,j)$ and
\item $\{i,k\}\prec\{i,j\}$
\item $\{k,j\}\prec\{i,j\}$.
\end{itemize}
\end{itemize} 

Then under the hypotheses of  Corollary \ref{comeBuch}  for $\mathcal F$ being a marked basis 
it is sufficient to check whether the S-polynomials belonging to $\{\{i,j\}, 1\leq i < j \leq s\}\setminus\mathfrak{B}$ for $\mathcal{F}$ reduce to 0.
\end{corollary}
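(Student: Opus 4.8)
The plan is to deduce the statement from Corollary~\ref{comeBuch}: by that result $\mathcal F$ is a marked basis exactly when every S-polynomial whose associated $\lcm$ lies in a cone reduces to $0$, so it suffices to show that, \emph{assuming} every $S(i,j)$ with $\{i,j\}\notin\mathfrak B$ reduces to $0$, in fact every S-polynomial lies in $\langle\tau\mathcal F\rangle$ (and hence, by the bound on its support together with Corollary~\ref{sommasemplice}, reduces to $0$). I would prove this by noetherian induction on the well-ordering $\prec$ of the set of S-pairs fixed in \eqref{Eqprec}. For a pair $\{i,j\}\notin\mathfrak B$ there is nothing to do, since reduction to $0$ — and therefore membership in $\langle\tau\mathcal F\rangle$ — is part of the hypothesis; the whole content is to dispatch the pairs belonging to $\mathfrak B$, and by the definition of $\mathfrak B$ each such pair falls into exactly one of the two listed cases.

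First I would treat the coprime case ${\bf T}(i,j)={\bf T}(i){\bf T}(j)$. Here I would simply invoke Buchberger's First Criterion (Proposition~\ref{comeBuchMigli1}): under the standing hypotheses of Corollary~\ref{comeBuch} a coprime S-pair never needs to be tested, because $S(f_{\alpha'},f_\alpha)$ is automatically forced into $\langle\tau\mathcal F\rangle$ and hence reduces to $0$. This case uses nothing from the induction beyond the ambient setup.

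The substantive case is the chain condition: there is $k$ with ${\bf T}(k)\mid{\bf T}(i,j)$ and $\{i,k\}\prec\{i,j\}$, $\{k,j\}\prec\{i,j\}$. Since ${\bf T}(k)\mid{\bf T}(i,j)$ we have ${\bf T}(i,j,k)={\bf T}(i,j)$, so M\"oller's relation (Lemma~\ref{SyzRel}) collapses to
$$S(i,j)=\frac{{\bf T}(i,j)}{{\bf T}(i,k)}\,S(i,k)+\frac{{\bf T}(i,j)}{{\bf T}(k,j)}\,S(k,j),$$
expressing $S(i,j)$ as a combination of fixed monomial multiples of the two strictly $\prec$-smaller S-polynomials. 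Both $S(i,k)$ and $S(k,j)$ lie in $\langle\tau\mathcal F\rangle$ — by the inductive hypothesis when their pair is in $\mathfrak B$, and by the standing hypothesis when it is not — with strictly lower representations bounded by $\varphi({\bf T}(i,k))$ and $\varphi({\bf T}(k,j))$ respectively, since the leading terms cancel in each S-polynomial. I would then follow the bookkeeping in the proof of the Second Criterion (Proposition~\ref{comeBuchMigli2}): multiplying those representations by the monomials ${\bf T}(i,j)/{\bf T}(i,k)$ and ${\bf T}(i,j)/{\bf T}(k,j)$ and applying \eqref{piuordinato} pushes every head strictly below $\varphi({\bf T}(i,j))$, so the sum provides a $\varphi({\bf T}(i,j))-\SLR$ of $S(i,j)$; Corollary~\ref{sommasemplice} then yields $S(i,j)\in\langle\tau\mathcal F\rangle$ and $S(i,j)\xrightarrow{\mathcal F \mathcal J'}_\star 0$. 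Equivalently, this case is literally the assertion of Proposition~\ref{comeBuchMigli2}, with the two ``already checked'' S-pairs supplied by the induction hypothesis.

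I expect the delicate points to be organizational rather than computational, since the two criteria are already established. The well-foundedness of the induction rests on \eqref{Eqprec} making $\prec$ refine divisibility, which is exactly what guarantees that the witness $k$ in the chain case produces pairs $\{i,k\},\{k,j\}$ that are genuinely $\prec$-smaller, so that the induction hypothesis is available; I would also verify the base case, noting that a $\prec$-minimal pair cannot satisfy the chain condition and is therefore either coprime (First Criterion) or outside $\mathfrak B$ (hypothesis). The one quantitative step requiring care is the propagation of the lower-representation bound through multiplication by ${\bf T}(i,j)/{\bf T}(i,k)$, where \eqref{piuordinato} — rather than any monotonicity of $\varphi$ with respect to $\prec$ — is precisely what is needed; everything else is the routine passage, via Corollary~\ref{sommasemplice} and Proposition~\ref{comeBuch1}, among membership in $\langle\tau\mathcal F\rangle$, existence of a bounded strictly lower representation, and reduction to $0$.
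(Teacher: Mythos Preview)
Your proposal is correct and follows essentially the same approach as the paper: induction on the well-ordering $\prec$ of S-pairs, handling pairs outside $\mathfrak B$ by hypothesis, coprime pairs by the First Criterion (Proposition~\ref{comeBuchMigli1}), and chain-witness pairs by the Second Criterion (Proposition~\ref{comeBuchMigli2}) applied to the inductively smaller pairs $\{i,k\},\{k,j\}$. Your write-up is considerably more detailed than the paper's three-line sketch, but the structure and the tools invoked are identical.
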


\begin{proof} The proof is performed by induction according $\prec$:
for each $i,j, 1 \leq i < j \leq s,$ either
\begin{itemize}
\item $\{i,j\}\notin{\mathfrak{B}}$, and $S(i,j)$ reduces to 0 by assumption, or
\item ${\bf T}(i) {\bf T}(j) = {\bf T}(i, j)$  and $S(i,j)$ reduces to 0 by Buchberger's First Criterion, or
\item $S(i,j)$ reduces to 0 by  Buchberger's Second Criterion, since by inductive assumption both $S(i,k)$ and $S(k,j)$ reduce  to 0.
\end{itemize}
\end{proof}

 The following example shows that Corollary \ref{comeBuch} can effectively apply the power granted by  M\"oller Lemma and Buchberger's Corollary~\ref{GMCriCor} only if  the construction outlined in Lemma~\ref{sottoconidisgiunti} is performed on the elements of $M$ after having preliminarily ordered them so that
(\ref{EqTeo}) holds.

\begin{example}\label{Ultimo}
Let $\mathcal J=(M=\{ xy, xz,yz^2\}, \lambda, \tau)$ be the RS in $\TT=\TT[x,y,z]$ with  disjoint cones given by $\tau_{xy}=\TT[x,y]$, $\tau_{xz}=\TT[x,z]\cup \TT[x,y]$, $\tau_{yz^2}=\TT[x,y,z]$ (and tails defined in any way such that $\mathcal J$ be noetherian). In order to decide whether  a marked set $\mathcal F=\{ f_{xy} ,f_{xz} ,f_{yz^2} \}$ on $\mathcal J$  is a basis according with Corollary \ref{comeBuch}  we should    check the reductions of the three S-polynomials $S( f_{xz} ,f_{xy} )= zf_{xy}-yf_{xz}$, $S(f_{yz^2},f_{xy}) =   z^2f_{xy}-  xf_{yz^2}$, $S(f_{yz^2}, f_{xz})=yzf_{xz}- xf_{yz^2}$. However, by Proposition \ref{comeBuchMigli2} it is sufficient to check  the first and  either the   second   or the third pair, as both $xy$ and $xz$ divide $\lcm (xy,yz^2) = \lcm (xz,yz^2) = xyz^2$.

 Note that we have
\begin{eqnarray*}
S(f_{yz^2},f_{xy})-zS( f_{xz} ,f_{xy} )+S(f_{yz^2},f_{xy})
&=& \\
\left( z^2f_{xy}- {\bf  xf_{yz^2}}\right)-z\left(zf_{xy}-{\bf yf_{xz}}\right)+\left(yzf_{xz}- xf_{yz^2}\right) 
&=& 0
\end{eqnarray*}
where $xf_{yz^2}, yzf_{xz}\notin\langle\tau'\mathcal F\rangle$ while $xf_{yz^2}, yf_{xz}\in\langle\tau'\mathcal F\rangle;$ as a consequence we have 
$$g:= yf_{xz}\in\langle\tau'\mathcal F\rangle \not\Longrightarrow zg= yzf_{xz}\in\langle\tau'\mathcal F\rangle$$ 

We further remark that the ordering of the elements of $M$ which follows the construction proposed by Janet \cite{Jan1} has the negative aspect that the first element $yz^2$ to which, according the Staggered Basis construction outlined in Lemma~\ref{sottoconidisgiunti}, we associate $\tau_{yz^2}=\TT$ is of higher degree then the other two elements.

This is the reason why we fail here to obtain the full effect of M\"oller Lemma.
 \end{example}

 It is well-known that the need of storing and ordering all pairs  $\{i,j\}, 1\leq i < j \leq s,$
in order to extract $\mathfrak{B}$ produces a bottleneck and is the weakness of Buchberger's Corollary~\ref{GMCriCor}; all efficient implementation of Buchberger Criteria have the ability of storing only ``useful'' pairs; our approach based on Corollary \ref{comeBuch} shares then same property.
 
\section{Stably ordered reduction structures}\label{sez:ordered}

Another case in which the control proving whether a marked set is a marked basis can be performed via a finite number of  predetermined reductions is the case of 
 {\it  stably ordered RSs} that now we introduce.
 
 In the following Section \ref{casispeciali}, we will examine some significant examples that are included in this case, such as border bases and Pommaret bases; we will see that for each of them we can consider term-ordering free versions.

\begin{definition}\label{fortementeordinatater} Let $\mathcal J=(M,\lambda, \tau)$ be a RS.    We will say that  $\mathcal J$ is { \it  stably ordered}   
by  a well-founded  order  $\Prec$ if 
  taken $x^\alpha,  x^{\alpha'}\in M$   and $x^\eta, x^{\eta'}, x^\epsilon \in \mathcal T$:
 \begin{enumerate}[StOr1:\ ]
\item \label{fortementeordinatater_1} $x^\eta\Succ 1$ for each term $x^\eta \neq 1$
\item \label{fortementeordinatater_2} $x^{\eta'}  \Succ x^{\eta}$ iff  $x^{\eta'+\epsilon} \Succ  x^{\eta+\epsilon}$
\item  \label{fortementeordinatater_3} if  $ x^{\eta+\alpha}=x^{\eta'+\alpha'}\in \cone({x^{\alpha'}})$   and \ $x^\alpha\neq x^{\alpha'}$,    then      $x^{\eta} \Succ x^{\eta'}$
\item \label{fortementeordinatater_4}   if  $   x^{\gamma} \in \lambda_\alpha$, $x^\eta \in \tau_\alpha$   and  $x^{\eta+\gamma}=x^{\alpha'+\eta'} \in \cone(x^{\alpha'})$  then  $x^\eta \Succ x^{\eta'}$.
\end{enumerate} 
\end{definition}

\begin{lemma} \label{exremark} A stably ordered RS  $\mathcal J$ has reduced tails and  disjoint cones.
\end{lemma}
\begin{proof}
Let $\mathcal J=(M,\lambda, \tau)$ be  stably ordered by the well founded  order $\Prec$.  If for some $x^\alpha, x^{\alpha'} \in M$ there is $x^\gamma \in   \lambda_\alpha \cap \cone(x^{\alpha'})$, then  by StOrd\ref{fortementeordinatater_4} (with $x^\eta=1$)     we get $1\Succ x^{\gamma-\alpha'}$ in  contradiction with StOrd\ref{fortementeordinatater_1}. Hence $\mathcal J$ has reduced tails.

\medskip

If there is a term  $ x^{\delta} \in \cone(x^\alpha)\cap \cone(x^{\alpha'}) $ with  $x^\alpha \neq x^{\alpha'}$,  by StOrd\ref{fortementeordinatater_3} we would get the contradiction $x^{\delta-\alpha} \Succ x^{\delta- \alpha'}$ and also $x^{\delta- \alpha'}\Succ x^{\delta- \alpha}$. Therefore  $\mathcal J$ has  disjoint cones.
\end{proof}

 Due to the previous lemma it makes sense the following
 
 \begin{definition}\label{varphi}
 Let $\mathcal J=(M,\lambda, \tau)$ be a RS  stably ordered by  $\Prec$ and  $\varphi \colon J \rightarrow \mathcal T$ be  the function  given by  
 $ \varphi(x^\beta):=x^{\beta-\alpha} $ where $x^\alpha $ is the unique term in $M$ such that $x^\beta \in \cone(x^\alpha)$.
 We will denote by $\pmb{<_\varphi}$ the following relation in $\mathcal T$
$$  x^\beta \pmb{>_\varphi}  x^\delta \hbox{ iff  either } x^\beta \in J,\ x^\delta \in \cN(J) \hbox{ or } x^\beta,   x^\delta \in J  \hbox{ and }  \varphi(x^\beta) \Succ \varphi(x^\delta).$$
\end{definition}

\begin{proposition}\label{isordered}
If  $\mathcal J$ is  stably ordered by the well founded order $\Prec$, then it is noetherian,  ordered by $\pmb{<_\varphi}$, and confluent.
\end{proposition}
\begin{proof}
Clearly, $\pmb{<_\varphi}$ is a well-founded order in $\mathcal T$, since $\Prec$ is.

Moreover, for every $x^\alpha \in M$,  $   x^{\gamma} \in \lambda_\alpha$, $x^\eta \in \tau_\alpha$   we have either $x^{\eta+\gamma}\in \cN(J)$ or $x^{\eta+\gamma}=x^{\alpha'+\eta'} \in \cone(x^{\alpha'})$; in both cases  $x^{\alpha+\gamma} \pmb{>_\varphi}  x^{\eta+\gamma}$: by definition of of $\pmb{<_\varphi} $ in the first case, by  StOrd\ref{fortementeordinatater_4}  in the second one.

 The noetherianity of $\mathcal J$ follows from the fact that it is ordered (Theorem \ref{implicazioni}) and the confluence by  the fact that it has disjoint cones (Lemma \ref{exremark} and Remark \ref{completezza}).
\end{proof}

Let $\mathcal J$ be stably ordered RS and let   $x^{\overline \alpha}\in M $ be  such that $\tau_{\overline \alpha}=\mathcal T$:  such an  element always exists (Lemma \ref{primolemma}), and is unique since the cones are disjoint. We can reformulate the conditions  StOrd1-StOrd4 in terms of  
$ \pmb{<_\varphi} $   has follows:  for every $x^\alpha \in M$

\medskip

\begin{enumerate}
\item [StOr1':\ ]   $x^{\eta + {\overline \alpha}} \pmb{>_\varphi}   x^{{\overline \alpha}}$ for each term $x^\eta \neq 1$;
\item  [StOr2':\ ]  $x^{\eta'+{\overline \alpha}}  \pmb{>_\varphi}    x^{\eta+{\overline \alpha}}$ iff  $x^{\eta'+\epsilon+{\overline \alpha}}  \pmb{>_\varphi}   x^{\eta+\epsilon+{\overline \alpha}}$ for each term $x^\epsilon$;
\item[StOr3':\ ]   if   $x^\eta \notin \tau_\alpha$,    then      $x^{\eta + {\overline \alpha}} \pmb{>_\varphi}  x^{\eta+\alpha}$;
\item[StOr4':\ ]    if  $x^\eta \in \tau_\alpha$ and  $   x^{\gamma} \in \lambda_\alpha$,    then  $x^{\eta +{\overline \alpha}} \pmb{>_\varphi}  x^{\eta + \gamma}$  and $x^{\eta +{\alpha}} \pmb{>_\varphi}  x^{\eta + \gamma}$.
\end{enumerate} 

\medskip

\begin{lemma} \label{utilepermembership}  Let $\mathcal F$ be a marked set over a stably ordered RS $\mathcal J$ and let $\pmb{<_\varphi} $ and  $x^{\overline \alpha}$ be as above.  

Let  us consider   terms $x^\alpha, x^{\alpha'} \in M$,  $x^\eta, x^{\eta'}, x^\delta \in \mathcal T$ and polynomials $g,l\in \mathcal P$.  
\begin{enumerate}[i)]
\item If  $x^\delta$ appears in the support of   $x^\eta f_\alpha -x^{\eta+\alpha}$, then 
$ x^{\eta+\overline \alpha}  \pmb{>_\varphi} x^\delta.$ 
\item If  $g \rightarrow_{\mathcal F \mathcal J}^+ l\downarrow$ and  $x^{\eta+\overline \alpha}  \pmb{>_\varphi} x^\gamma $ for every term  $x^\gamma$ that appears in the support of   $g$,   then $g-l$ has a $x^{\eta+\overline \alpha}-\SLR $  (w.r.t. $ \pmb{<_\varphi} $).
\item In particular, if $x^\eta \notin \tau_\alpha$, then 
  $$x^\eta f_\alpha \rightarrow_{\mathcal F \mathcal J}^+ 0\downarrow \Longleftrightarrow x^\eta f_\alpha   \hbox{ has a  }  x^{\eta+{\overline \alpha}}-\SLR.$$
\end{enumerate} 
\end{lemma}

\begin{proof}  $ i)$   By     Proposition \ref{isordered} the marking of $\mathcal F$ is coherent with the well-founded ordering $\pmb{>_\varphi} $, so that   $ x^{\eta+  \alpha}  \pmb{>_\varphi} x^\delta$. 

        If $x^\eta \in \tau_\alpha$, then $\varphi(  x^{\eta+\alpha}  )=\varphi(x^{\eta+\overline \alpha})$, hence   $ x^{\eta+ \overline  \alpha}  \pmb{>_\varphi} x^\delta$.   
        
          If $x^\eta \notin \tau_\alpha$,   by StOr3'  we have   $x^{\eta + {\overline \alpha}} \pmb{>_\varphi}  x^{\eta + { \alpha}} \pmb{>_\varphi} x^\delta$.
          \medskip
            
              \noindent Item  $ii)$  follows from Proposition \ref{ScritturaDistinti1} \ref{ScritturaDistinti1_iii};  item $iii)$ is a consequence of the previous ones and of StOr3'.
\end{proof}

\begin{theorem}\label{conidisgiunti}  Let $\mathcal F$ be a marked set over a stably ordered  RS  $\mathcal J$.  Then, the property for $\mathcal F$  of being  a marked basis is equivalent to 
 \begin{equation} \label{conidisgiunti_1}     \forall x^{\beta}  \in M,\ \forall x^\varepsilon  \hbox{ minimal in }  \TT \setminus \tau_\beta \hbox{ w.r.t. the divisibility, it holds } x^\varepsilon f_\beta   \rightarrow_{\mathcal F \mathcal J}^+ 0\downarrow .
\end{equation}
 If  moreover  $\mathcal J$ has   multiplicative variables, then it is  also equivalent to the previous ones:
 \begin{equation} \label{conidisgiunti_2} \forall x^\beta  \in M , \ \forall   x_i\notin \tau_\beta \hbox{ it holds }
 x_i f_\beta   \rightarrow_{\mathcal F \mathcal J}^+ 0\downarrow.
\end{equation}
\end{theorem}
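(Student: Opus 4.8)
The plan is to first reduce the statement to the membership criterion already proved, and then to strengthen it by a well-founded induction that exploits the stably ordered structure. Recall that a stably ordered RS $\mathcal J$ has disjoint cones and is therefore noetherian and confluent (as observed just after Definition~\ref{fortementeordinatater}); in particular $\mathcal J$ is its own only substructure, so $\tau'=\overline\tau=\tau$, and $\xrightarrow{\mathcal F\mathcal J}$ is confluent for every marked set $\mathcal F$. Hence Theorem~\ref{sommadiretta} applies and tells us that $\mathcal F$ is a marked basis if and only if it satisfies the condition $(\ast)$ that $x^\gamma f_\alpha\xrightarrow{\mathcal F\mathcal J}_\star 0$ for every $x^\alpha\in M$ and every $x^\gamma\notin\tau_\alpha$; by confluence and disjointness of the cones (Corollary~\ref{sommasemplice}) this is in turn equivalent to $x^\gamma f_\alpha\in\langle\tau\mathcal F\rangle$ for all such pairs. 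Since the minimal terms of the monomial ideal $\TT\setminus\tau_\beta$ form a subset of $\TT\setminus\tau_\beta$, the implication $(\ast)\Rightarrow\eqref{conidisgiunti_1}$ is immediate, and the whole content lies in the converse.

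For $\eqref{conidisgiunti_1}\Rightarrow(\ast)$ I would argue by well-founded induction on $\psi(x^\delta)\in W$, proving the stronger statement $Q(x^\delta)$: for every $x^\beta\in M$ with $x^\delta\notin\tau_\beta$, the polynomial $x^\delta f_\beta$ admits a representation $\sum_j d_j x^{\mu_j}f_{\alpha_j}$ by $\tau\mathcal F$ with $\psi(x^{\mu_j})\Prec\psi(x^\delta)$ for all $j$, that is, a $\psi(x^\delta)$-$\SLR$. If $x^\delta$ is minimal in $\TT\setminus\tau_\beta$, then \eqref{conidisgiunti_1} gives $x^\delta f_\beta\in\langle\tau\mathcal F\rangle$; writing $x^{\delta+\beta}=x^{\eta'+\alpha'}\in\cone(x^{\alpha'})$ with $\alpha'\neq\beta$ (possible since $x^\delta\notin\tau_\beta$ and the cones are disjoint), Lemma~\ref{utilepermembership} applied to $x^\delta f_\beta$ and $x^{\eta'}f_{\alpha'}$ shows that every term of $\supp(x^\delta f_\beta)$ has $\varphi$-value $\Prec\psi(x^\delta)$, and Proposition~\ref{ScritturaDistinti1}, point iii), then yields the desired $\psi(x^\delta)$-$\SLR$. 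If instead $x^\delta$ is not minimal, I would pick a minimal $x^\varepsilon\in\TT\setminus\tau_\beta$ dividing it, write $x^\delta=x^{\varepsilon+\zeta}$ with $\zeta\neq0$, apply the minimal case to obtain $x^\varepsilon f_\beta=\sum_j d_j x^{\mu_j}f_{\alpha_j}$ with $\psi(x^{\mu_j})\Prec\psi(x^\varepsilon)$, and multiply by $x^\zeta$ to get $x^\delta f_\beta=\sum_j d_j x^{\zeta+\mu_j}f_{\alpha_j}$ with $\psi(x^{\zeta+\mu_j})\Prec\psi(x^\delta)$ by condition StOr2. For the indices $j$ with $x^{\zeta+\mu_j}\in\tau_{\alpha_j}$ the summand already lies in $\tau\mathcal F$; for the indices with $x^{\zeta+\mu_j}\notin\tau_{\alpha_j}$ the inductive hypothesis $Q(x^{\zeta+\mu_j})$ (legitimate since $\psi(x^{\zeta+\mu_j})\Prec\psi(x^\delta)$) rewrites $x^{\zeta+\mu_j}f_{\alpha_j}$ as a $\psi(x^{\zeta+\mu_j})$-$\SLR$. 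Substituting these back produces a $\psi(x^\delta)$-$\SLR$ of $x^\delta f_\beta$, closing the induction and giving $x^\delta f_\beta\in\langle\tau\mathcal F\rangle$, hence $(\ast)$.

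Finally, under the assumption of multiplicative variables $\tau_\beta=\TT[\mu_\beta]$, the minimal terms of $\TT\setminus\tau_\beta$ with respect to divisibility are exactly the single variables $x_i$ with $x_i\notin\mu_\beta$, i.e.\ $x_i\notin\tau_\beta$; hence \eqref{conidisgiunti_1} collapses to \eqref{conidisgiunti_2} and the extra equivalence follows at once. I expect the delicate point to be the base (minimal) case: the naive idea of multiplying a representation of $x^\varepsilon f_\beta$ by $x^\zeta$ fails because $\langle\tau\mathcal F\rangle$ is only an $A$-module, so the products $x^{\zeta+\mu_j}f_{\alpha_j}$ may leave $\tau\mathcal F$. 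The two ingredients that rescue the argument are Lemma~\ref{utilepermembership}, which forces the reduction of $x^\varepsilon f_\beta$ to use only multipliers of strictly smaller $\psi$-value, and condition StOr2, which transports this strict decrease through multiplication by $x^\zeta$ so that the escaped summands genuinely fall under the inductive hypothesis.
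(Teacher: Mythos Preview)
Your proof is correct and follows essentially the same route as the paper's. The paper phrases the argument as a proof by minimal counterexample rather than direct well-founded induction on $\psi(x^\delta)$, and it does not separate the case where $x^\delta$ is itself minimal in $\TT\setminus\tau_\beta$ (that case is absorbed into the general one with $x^{\gamma-\varepsilon}=1$); but the core steps---invoke Lemma~\ref{utilepermembership} to obtain the $\psi(x^\varepsilon)$-$\SLR$ of $x^\varepsilon f_\beta$, multiply by the missing factor, use StOr2 to propagate the strict $\psi$-inequality, and then use the induction/minimality to absorb the summands that escape $\tau\mathcal F$---are identical.
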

\begin{proof} Let $\pmb{<_\varphi} $ and  $x^{\overline \alpha}$ be as above.  
Due to Theorem \ref{sommadiretta} it is clear  that for a marked basis,   \eqref{conidisgiunti_1} and \eqref{conidisgiunti_2} hold. So we only prove the non-obvious implications.

Suppose that   \eqref{conidisgiunti_1}  holds, but $(\mathcal F)$ is not contained in  $\langle \tau \mathcal F  \rangle$. Then, the following set  is nonempty 
$$ U:=\{x^{\eta+\overline \alpha} \ \vert \    \exists  \  f_\alpha  \in   \mathcal F \ \hbox{  s.t. }   \ x^\eta f_\alpha  \notin  \langle\tau \mathcal F\rangle    \ \}.$$
Since $\pmb{<_\varphi} $ is  a  well-founded order on $\mathcal T$,  the set $U$  
has at least a minimal element: suppose that such a minimal element is  $x^{{\gamma+{\overline \alpha}}}$ and that $x^{{\gamma}}f_{ \beta}\notin \langle \tau\mathcal F \rangle $. 
This is possible only if  $x^{{\gamma}}\notin \tau_\beta$ and,  by the assumption \eqref{conidisgiunti_1}, $x^{\gamma}$ is not minimal in   $ \TT \setminus \tau_\beta$ w.r.t. the divisbility.

Let $x^\varepsilon$ be  a  divisor of $x^{{\gamma}}$,  minimal in $\TT \setminus  \tau_{{\beta}}$. By hypothesis $ x^\varepsilon f_\beta    \rightarrow_{\mathcal F \mathcal J}^+ 0\downarrow$, hence it has a    $x^{\varepsilon+\overline \alpha}-\SLR$ (Lemma \ref{utilepermembership}).  
Multiplying by  $x^{\gamma-\varepsilon}$ every polynomial   $x^{\eta_i}f_{\alpha_i}$ of this representation, we obtain   $x^\gamma f_\beta$ as a sum of polynomials  $x^{\gamma-\varepsilon+\eta_i}f_{\alpha_i}$   such that 
$$ x^{\gamma-\varepsilon+\eta_i +\overline \alpha }  \pmb{<_\varphi} x^{\gamma-\varepsilon}\cdot x^{\varepsilon+\overline \alpha}=x^{\gamma+\overline \alpha}. $$
By the minimality of $x^{\gamma+\overline \alpha}$ in $U$, we deduce that $x^{\gamma-\varepsilon+\eta_i}f_{\alpha_i}\in   \langle\tau \mathcal F\rangle  $, hence the contradiction   $x^\gamma f_\beta \in   \langle\tau \mathcal F\rangle $.

\smallskip

The second statement directly follows from the first; in fact
$\{x_i\notin \tau_\beta\}$ is a minimal basis of $\TT \setminus \tau_\beta$.
\end{proof}

\begin{remark}
Consider a polynomial $x^\varepsilon f_\beta$   as stated in  Theorem \ref{conidisgiunti} and suppose that  $x^{\varepsilon+\beta}\in \cone(x^\alpha)$. 
Then
$S(f_\alpha, f_\beta)$  coincides with  $x^\varepsilon f_\beta-x^{\varepsilon + \beta-\alpha}  f_\alpha$. Indeed by minimality of $x^\varepsilon$ in $\TT\setminus \tau_\beta$ each proper divisor  $x^\delta$ of $x^{\varepsilon}$ belongs to $\tau_\beta$ so it cannot also belong to 
 $\cone(x^\alpha)$.
 
Anyway, the condition concerning the S-polynomials is not sufficient to ensure the minimality of  $x^\gamma$ in  $\TT\setminus \tau_\beta$.
In other words, the conditions required in Theorem \ref{conidisgiunti} are weaker than the ones of Corollary \ref{comeBuch}.

\end{remark}

\section{Specializations}\label{casispeciali}  

 Buchberger reduction, mainly after Reeves-Sturmfels theorem, is associated to the idea of coherence with a term order, i.e. the fact that the head terms are bigger than any term in the tails w.r.t. a fixed term order.

What, instead, is wrong, is to associate Gr\"obner bases to a Buchberger reduction viewed as, in our language, a RS with maximal cones.
In fact all representations (and implementations) of Buchberger reductions assume that the available basis $G$ is given as an ordered set of polynomials and that in each step of reduction the reducible term $t$ is systematically reduced with the first element $g\in G$ whose leading term divides $t$.

In this paper, the thing we are more interested in, is the reduction procedure $\rightarrow_{\mathcal F \mathcal J}^+$, associated to a marked set  $\mathcal F$, rather than the marked set (or basis) itself.
The reduction depends both on  $\mathcal F$ and on the RS  $\mathcal J$, and in particular by the set of multiplicative terms.

Considering a monomial ideal $J$ and a set of generators $M$  we can define the RS  $\mathcal J=(M,\lambda, \tau)$ setting 
\begin{itemize}
\item  $M=\{x^{\alpha_1},\ldots,x^{\alpha_s}\}$ an ordered set of generators of a monomial ideal $J$,
\item $\lambda_{\alpha_i}=\{ x^\gamma \in \TT \  \text{ s.t.  }  \ x^\gamma \prec x^{\alpha_i}\}$,
\item  $\cone(x^{\alpha_i}):=\TT\setminus\bigcup_{j=1}^{i-1}\cone(x^{\alpha_j})$.
\end{itemize}
and we obtain the RSs coherent with the  term order $\prec$ and so also noetherian (with  the term order $\prec$ as well funded ordering). In this context,  Reeves-Sturmfels Theorem (Theorem \ref{REEVESTURMFELS}) says that a RS  is  noetherian iff it is coherent with a term order.
Thus Gr\"obner bases relative to  $ \prec$ with initial ideal  $J$ are all and only the marked bases over the RS
 $\mathcal J$. 

If we alternatively set
\begin{itemize}
\item  $M=\{x^{\alpha_1},\ldots,x^{\alpha_s}\}$ an ordered set of minimal monomial basis of a monomial ideal $J$,
\item  $\lambda_{\alpha_i}=\{ x^\gamma \in \TT \  \text{ s.t.  }  \ x^\gamma \prec x^{\alpha_i}\}  \cap \cN(J)$,
\item  $\cone(x^{\alpha_i}):=\TT\setminus\bigcup_{j=1}^{i-1}\cone(x^{\alpha_j})$
\end{itemize}
we get all and the only \emph{reduced} Gr\"obner bases.

A marked set  $\mathcal F$ is a basis iff $\rightarrow_{\mathcal F \mathcal J}^+ $ is confluent. Following Buchberger's algorithm, the test can be performed via the reduction of a limited number of S-polynomials among elements of $\mathcal{ F}$.
\\
Indeed, the Main Theorem of Gr\"obner bases Theory \cite[2.2]{Bruno} declares that a generating set $\mathcal{ F}$ is a Gr\"obner basis if and only if each 
S-polynomial between two elements of $\mathcal{ F}$ reduces to 0; Gebauer-M\"oller criteria \cite{GM2,M}                                                                      allow to reduce the number of S-polynomials to be considered.  

The importance of Buchberger Theory as a tool for solving ideal theoretical problems, gave recently interest to alternative tools for producing Gr\"obner bases; of course the milestones of normal forms given as linear combination of elements in the {\em order ideal} $\cN(J)$ and obtained via the (noetherian) Buchberger reduction are preserved and, after all, were already available to the researchers inspired by Hilbert\footnote{Buchberger reduction can be even read already in \cite{Gor1}.}.

The main contribution by Janet is the introduction of the decomposition of the monomial ideal $J$ into cones of multiplicative sets generated by multiplicative variables.

\begin{definition}  (Janet, 1920) \cite[pp .75-9]{Jan1}   Given a generating set  $M$ of a monomial ideal $J$ and one of its elements $x^\alpha$, a variable $x_j$ is called {\it multiplicative} for $x^\alpha$ w.r.t. $M$  if in $M$ there are no elements $x^\beta$ s.t. $\deg_i(x^\alpha)=\deg_i(x^\beta)$ for each $i>j$ and $\deg_j(x^\alpha)<\deg_j(x^\beta)$.

The \em{class} of $x^\alpha\in M$ is the set $\{x^\beta x^\gamma, x^\gamma\in\TT[\mu_\alpha]\}$ where $\mu_\alpha$ is the set of the multiplicative variables for $x^\alpha$.

Moreover, $M$  is  {\it  complete} if  for each term $x^\gamma\in M$ and each non-multiplicative variable $x_j$ there is a monomial in $M$ whose class contains $x^\gamma x_j$.
\end{definition}
Janet bases are the marked bases over RSs of the following form
\begin{itemize}
\item  $M$ a complete generating set of the monomial ideal $J$,
\item  $\lambda_{\alpha}=\{ x^\gamma \in \TT \  \text{ s.t.  }  \ x^\gamma \prec x^{\alpha}\}  \cap \cN(J)$,
\item  $\tau_{\alpha}:=\TT[\mu_\alpha]$.
\end{itemize}
These are RSs, which are coherent with a term order, have multiplicative variables and disjoint cones.
The RSs of the form defined by Janet need to be coherent with a term order, in order to satisfy noetherianity.

The most important difference between Janet's decomposition in cones and our definition is to give a general rule for defining the multiplicative variables for each term in $M$ by considering the inner relation among the elements of $M$.

This aspect has been inforced in the formulation of Janet's approach proposed by Gerdt and Blinkov \cite{BG3,BG4}.

\begin{definition}[Gerdt-Blinkov, \cite{GB5}]\label{Dinv}
An {\em involutive division}\index{division!involutive}\index{involutive!division} $L$ or {\em $L$-division}
on ${\mathcal T}$ is a relation $\mid_L$ defined, for each finite set $U\subset{\mathcal T}$, on the set
 $U \times {\mathcal T}$ in such a way that the following holds for each $u,u_1\in U$
and $t, t_1\in{\mathcal T}$
\begin{enumerate}
\renewcommand\theenumi{{\rm (\roman{enumi})}}
\item $u \mid_L t \then u \mid t$;
\item $u \mid_L u$;
\item $u \mid_L ut, u \mid_L ut_1 \iff
u \mid_L utt_1$;
\item $u\mid_Lt, u_1\mid_Lt \then \Either 
u\mid_Lu_1 \Or u_1\mid_Lu$;
\item $u \mid_L u_1, u_1 \mid_L t \then u \mid_L t;$
\item if $V \subseteq U$ and $u\in V$ then 
$u \mid_L t$ w.r.t. $U \then u \mid_L t$ w.r.t. $V$.
\end{enumerate}

If $u \mid_L t = uw$, $u$ is called an {\em involutive divisor}
of $t$, $t$ is called an {\em involutive multiple}
of $u$ and $w$ is said to be {\em multiplicative} for
$u$.

If $u \nmid_L t = uw$, $w$ is said to be {\em non-multiplicative} for
$u$.
\qed\end{definition}

This definition, for each set $U$ and each $u \in U$, partitions the set of variables in two subsets
\begin{itemize}
\item $M_L(U,u)$, containing the variables $x_i$
multiplicative for $u$:
$$x_i\in M_L(U,u)\iff u \mid_L u x_i;$$
\item $ NM_L(U,u)$, containing the variables $x_i$
non-multiplicative for  $u$:
$$x_i\in NM_L(U,u)\iff u \nmid_L u x_i.$$
\end{itemize}

Finally, for each involutive division $L$, each finite set $U\subset{\mathcal T}$ and each
$u\in U$,
we denote by $L(u,U)$ the \emph{multiplicative set } for $u$, i.e. the 
set of all the terms $w\in{\mathcal T}$
which are multiplicative for
$u$:
$$L(u,U) := \{w\in{\mathcal T} : 
u \mid_L  uw \}.$$
Remark that condition (iii) implies that each 
$L(u,U)$ is completely characterized by the partition
$$\{x_1,\ldots,x_n\} =  M_L(U,u)\sqcup NM_L(U,u)$$
since
$$L(u,U) = \{
x_1^{a_1}x_2^{a_2}\cdots x_n^{a_n} :
a_i \neq 0 \then x_i\in  M_L(U,u)\}.$$

With this notation it is easy to realize that the definition of
involutive division can be formulated as follows:
\begin{definition}[Gerdt---Blinkov]\label{InvDef} An {\em involutive division} $L$ or {\em $L$-division}
on ${\mathcal T}$ is the assignement, for each finite set $U\subset{\mathcal T}$
and each term $u\in U$ of a submonoid 
$L(u,U)\subset{\mathcal T}$ such that the following holds for each $u,u_1\in U$
and $t, w\in{\mathcal T}$
\begin{enumerate}
\renewcommand\theenumi{{\rm (\alph{enumi})}}
\item $t\in L(u,U), t_1 \mid t \then t_1\in L(u,U)$,
\item if $uL(u,U) \cap u_1L(u_1,U) \neq \emptyset$ then 
$u \in u_1L(u_1,U) $ or $u_1 \in uL(u,U) $;
\item if 
$u_1 = uw$ for some $w\in L(u,U)$, 
then $L(u_1,U) \subseteq L(u,U)$;
\item if $V \subseteq U$
then
$L(u,U) \subseteq L(u,V)$ for each $u\in V$.
\end{enumerate}
\end{definition}

A part from Janet bases, the more important bases defined in terms of involutive divisions considered today are Pommaret bases and Gerdt and Blinkov \cite{GB1,GB2} Janet-like bases.

An adaptation of the theory of Gerdt and Blinkov has been suggested in \cite{Grafo}:
\begin{definition}\label{RelInvDiv}
Let $U \subset \mathcal{T}$ be a finite set of terms.  We say that a \emph{relative involutive division} $L$ is given on $U$ if, for each $u \in U$ a partition 
 $$\{x_1,...,x_n\} =M_L(u,U) \sqcup NM_L(u,U),$$
is given on the set of variables s.t. denoted 
$$L(u,U):=\{x_1^{a_1}\cdots x_n^{a_n} \, \vert \, a_i \neq 0 \Rightarrow x_i \in M_L(u,U)\}, $$
the following two conditions hold:
\begin{enumerate}
\item $\cT(U)=\bigcup_{u \in U} uL(u,U)$;
\item $\forall u,v \in U$, $uL(u,U) \cap vL(v,U) = \emptyset$.
\end{enumerate}
The set $M_L(u,U)$ is called \emph{(relative) multiplicative variable's set},  $NM_L(u,U)$ is called \emph{(relative) non-multiplicative variable's set}, whereas $L(u,U)$  is the set of \emph{(relative) multiplicative terms}.
Denoting by $C_L(u,U):=uL(u,U)$ the \emph{(relative) cone} of $u \in U$, conditions $1-2$ above may be also rewritten as:
\begin{enumerate}
\item[1'.] $\cT(U)=\bigcup_{u \in U} C_L(u,U)$;
\item[2'.] $\forall u,v \in U$, $C_L(u,U) \cap C_L(v,U) = \emptyset$.
\end{enumerate}
\end{definition}

A relative involutive division $L$ on a finite set  $U \subset \mathcal{T}$ satisfies conditions $(i)-(iii)$ of Definition \ref{Dinv};
as regards conditions (iv)-(v) they trivially hold since their hypothesis can never happen, 
because the relative cones are disjoint by definition; what is more important, condition (vi) does not make sense in this context, due to the 
relativity of this involutive division.

This definition has been introduced, following an intuition by Janet,   
\begin{itemize}
 \item for analizing all the decompositions in cones by means of multiplicative variables  of the specific case $U=\mathcal{T}_D$ as a prelimnary step,
 \item for constructing all monomial ideals ${\sf T}$ of leading terms for each potential ideal whose Castelnuovo-Mumford regularity is $D$  and such that the chosen decomposition on $U$ turns out to give  a suitable decomposition also for them and their related escalier $\cN$. 
 \end{itemize}
These ideals/escalier can be identified using a combinatorial graph with the following property:
\begin{quote}
given an element in the ideal ${\sf T}$ (resp. escalier $\cN$), walking backwards (resp. forward) in the graph, we can identify all the other generators of ${\sf T}$ (resp. elements of 
$\cN\cap \mathcal{T}_D$). 
\end{quote}

This approach (which probably can be adapted in order to require that such decompositions preserve some symmetry) has no relation at all with any reduction; in our language it assumes that $\lambda_\alpha=\emptyset$ for all $x^\alpha \in M$, which as we remarked (Example~\ref{Diana}) is trivially noetherian and confluent.
 
\section{The zero-dimensional case and border bases} \label{zerodim}

We examine in depth the  zero-dimensional case, since it is suitable for many observations.

Let $J$ be a monomial ideal in $A[x_1, \dots, x_n]$ s.t. $\cN(J)$ is a finite set. An important concept in many papers on this case is the one of \emph{border}.

\begin{definition} The \rm{border} of $J$ (or of $\cN(J)$) is the set of terms
$$\cB(J):= x_1\cN(J)\cup \dots \cup x_n\cN(J) \setminus \cN(J).$$
\end{definition}

Clearly $\cB(J)$ contains the monomial basis of $J$, but in general as a proper subset.

We can characterize the elements of the border as follows:

$$x^\eta \in \cB(J) \iff \exists  x_j : \    x^\eta/x_j \in \cN(J).$$
It follows then that the divisors of an element in the border are all contained in  $\cN(J) \cup \cB(J)$
In many constructions of marked bases over the border, one considers a fixed term order and supposes that in each marked polynomial the elements in the tails are smaller than the head w.r.t. such a term order; anyway there also exist some bases, marked on $\cB(J)$ without this constraint (see \cite{KKR} and \cite{ABM}).

Note that the notion of border bases was originally introduced in
\cite{MMM1,MMM2}, but  in a context with no connection with RSs  (actually being a
reduction-free approach for computing canonical forms).

Our construction is not compatible with Mourrain improved formulation of border
bases in  \cite{Mou} under the notion of \emph{connected to 1}; indeed there it is not required the head terms to be a
semigroup ideal nor the  escalier to be an order ideal.

We can give a reformulation of these definitions in our language, defining a RS $\mathcal J $ as follows. 
Let $M=\{x^{\alpha_1}, \dots, x^{\alpha_s}\} $ be a list, formed by the elements of $\cB(J) $, ordered in an arbitrary way.
Then, we associate to each term  $x^\gamma$ in $J$ the last term   $x^{\alpha_i}$ of the  $M$   dividing $x^\gamma$.

\begin{table}[!ht]
\begin{center}
\caption{Border bases}\label{Border}
\begin{tikzpicture}[scale=0.84]
\draw [thick] (-3.8,1) -- (-3.8,-3.0);
\draw [thick] (-1.2,1) -- (-1.2,-3.0);
\draw [thick] (12,1) -- (12,-3.0);
\draw [thick] (-3.8,1) -- (12,1);
\draw [thick] (-3.8,-0.6) -- (12,-0.6);
\draw [thick] (-3.8,-1.7) -- (12,-1.7);
\draw [thick] (-3.8,-3.0) -- (12,-3.0);
\node at (-2.4,0.2) [] {  $   M$};
\node at (4.5,0.2) [] { $\begin{array}{l}      \text { The border basis  } \cB(J)=\{x^{\alpha_1}, \dots, x^{\alpha_s}\}  \end{array}$};
\node at (-2.4,-1.2) [] {  $  \lambda$};
\node at (3.0,-1.2) [] {  $\begin{array}{l}   \lambda_{\alpha_i}= \cN(J) \end{array}$};
\node at (-2.4,-2.4) [] {  $  \tau$};
\node at (5.2,-2.4) [] {  $\begin{array}{l} \tau_{\alpha_i}=\{ x^\eta \in \TT \text{  s.t. } \forall j>i : \  x^{\alpha_j} \not\vert \ x^{\eta+\alpha_i}    \}  \end{array}$};
\end{tikzpicture}
\end{center}
\end{table}
Notice that this is actually a RS and that the cones are disjoint, as proved in 
\begin{lemma}\label{order} Under the previous hypotheses (and w.r.t. the previous notation) 
\begin{enumerate}[i)]
\item  \label{order_i} for each  $x^{\alpha_i} \in M$ the set $\tau_{\alpha_i}=\{x^\eta  \in \TT \text{  s.t. } \forall j>i : \  x^{\alpha_j} \not\vert \ x^{\eta+\alpha_i}    \} $  is an order ideal.
\item \label{order_ii} Setting $\cone(x^{\alpha_i})= x^{\alpha_i} \tau_{\alpha_i}$, it holds $\bigcup_{x^{\alpha_i} \in M} \cone(x^{\alpha_i})=J $.
\end{enumerate}
\end{lemma}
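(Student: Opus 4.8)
The plan rests on a single reinterpretation of the defining condition for $\tau_{\alpha_i}$, which I would record at the outset because it drives both items. A term $x^\eta$ lies in $\tau_{\alpha_i}$ precisely when $x^{\alpha_i}$ is the \emph{last} element of the list $M=\{x^{\alpha_1},\dots,x^{\alpha_s}\}$ dividing $x^{\eta+\alpha_i}$: indeed $x^{\alpha_i}\mid x^{\eta+\alpha_i}$ automatically, and the requirement $x^{\alpha_j}\nmid x^{\eta+\alpha_i}$ for all $j>i$ says exactly that no element of larger index divides $x^{\eta+\alpha_i}$. Thus $\cone(x^{\alpha_i})=x^{\alpha_i}\tau_{\alpha_i}$ is the set of terms of $J$ whose largest divisor in $M$ is $x^{\alpha_i}$.

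For part \ref{order_i}) I would argue straight from the definition of order ideal. Suppose $x^\eta\in\tau_{\alpha_i}$ and $x^{\eta'}\mid x^\eta$; then $x^{\eta'+\alpha_i}\mid x^{\eta+\alpha_i}$. If some $x^{\alpha_j}$ with $j>i$ divided $x^{\eta'+\alpha_i}$, transitivity of divisibility would force $x^{\alpha_j}\mid x^{\eta+\alpha_i}$, contradicting $x^\eta\in\tau_{\alpha_i}$. Hence $x^{\eta'}\in\tau_{\alpha_i}$, so $\tau_{\alpha_i}$ is an order ideal. This step is purely formal and I expect no difficulty.

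For part \ref{order_ii}) the inclusion $\bigcup_i\cone(x^{\alpha_i})\subseteq J$ is immediate: each $x^{\alpha_i}\in\cB(J)\subseteq J$ and $J$ is a semigroup ideal, so every $x^{\eta+\alpha_i}$ lies in $J$. For the reverse inclusion I would take an arbitrary $x^\gamma\in J$ and first produce a divisor of $x^\gamma$ inside $M$. Since $x^\gamma\in J$ it is a multiple of some minimal generator $x^\mu\in\cG(J)$, and every minimal generator lies in the border: for each variable $x_k\mid x^\mu$ one has $x^\mu/x_k\in\cN(J)$, whence $x^\mu\in x_k\cN(J)\setminus\cN(J)\subseteq\cB(J)$. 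Therefore $x^\mu\in\cB(J)=M$ and the index set $\{\,i : x^{\alpha_i}\mid x^\gamma\,\}$ is nonempty. Let $i$ be its largest element and put $x^\eta:=x^{\gamma-\alpha_i}$. By maximality of $i$ no $x^{\alpha_j}$ with $j>i$ divides $x^\gamma=x^{\eta+\alpha_i}$, so $x^\eta\in\tau_{\alpha_i}$ and $x^\gamma\in\cone(x^{\alpha_i})$, as required.

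The only non-formal ingredients are the two standard facts invoked in the reverse inclusion, namely that every element of a monomial ideal is a multiple of a minimal generator, and that $\cG(J)\subseteq\cB(J)$; the latter is already recorded in the excerpt as the remark that $\cB(J)$ contains the monomial basis of $J$. I do not anticipate a genuine obstacle; the point to handle carefully is simply the ``last divisor'' bookkeeping. Choosing the maximal index $i$ is what guarantees membership in $\tau_{\alpha_i}$, and since a term of $J$ has exactly one largest divisor in $M$, the same argument simultaneously delivers the disjointness of the cones asserted in the surrounding text.
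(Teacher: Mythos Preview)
Your proof is correct and follows essentially the same route as the paper's own argument: part \ref{order_i}) is handled by the identical divisibility-transitivity contradiction, and part \ref{order_ii}) by picking, for a given $x^\gamma\in J$, the element of $\cB(J)$ with largest index dividing it. You simply spell out a few steps (e.g.\ why $\cG(J)\subseteq\cB(J)$ and the explicit choice of maximal index) that the paper leaves as ``trivially follows by construction and from the fact that $\cB(J)$ is a generating set for $J$''.
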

\begin{proof}   
i) Let $x^\eta \in \tau_\alpha$ and $x^{\varepsilon} \vert x^{\eta}$. If some $x^{\alpha '}$ subsequent to $x^\alpha$ in the list divides $x^{\varepsilon+\alpha}$ then it divides also $x^{\eta + \alpha}$ and this contradicts $x^\eta \in  \tau_\alpha$. 

ii) We prove that for each  $ x^\beta\in J$ there is a term  $x^{\alpha_i}\in \cB(J)$ s.t.  $x^{\beta-\alpha_i} \in \tau_{\alpha_i}$. This trivially follows by construction and from the fact that  $\cB(J)$ is a generating set for $J$.
\end{proof}

In \cite{KKR} the authors consider   a reduction process w.r.t. a marked set  $\mathcal F$ over $\cB(J)$  (called {\it border pre-basis}) and a procedure of reduction is defined. Roughly speaking,  a  term  $x^\gamma$ in $J$ is reduced by any  element in $\mathcal F$ whose head $x^\alpha$  has maximum degree  among those  in  $\cB  (J)$ dividing $x^\gamma$.   In order to prove the noetherianity of this reduction process,   a function   $ind_{\cB(J)}\colon J\cap \TT\rightarrow \NN$ called {\it index} is defined,  associating to each term $x^\gamma$ in $J$ the   degree of  $x^{\gamma-\alpha}$. 

We obtain a special case of this procedure considering $\mathcal F$ as a marked set over the RS $\mathcal J=(\cB(J), \lambda, \tau)$ as in Table 5 where   the terms in $\cB(J)$  are ordered in increasing order by degree; however the two procedures do not coincide  since  $\mathcal J$ has disjoint cones, while  a monomial $x^\gamma$ may have more than one divisor of  maximum degree in $\cB(J)$.

Furthermore, if we order  the elements of   $\cB(J)$ in increasing order w.r.t. a term order $\prec$ (not necessarily degree compatible), then $\mathcal J$   turns out to be  stably ordered with well-founded order $\prec$: in fact the conditions of definition \ref{fortementeordinatater}  are obviously consequence of the properties of a term order. Therefore,   Theorem \ref{conidisgiunti} gives us a finite set of reductions to control in order to decide whether  $\mathcal F$ is a marked basis.

 Notice that in \cite{KKR} there are no characterizations of marked bases using the reduction procedure;   the presented one  is based, as for   Mourrain's work, on the commutativity of multiplication matrices.

\medskip

We now show in some examples the consequences of modifying the order of the elements of $M=\cB(J)$.

\begin{example}\label{bordocrescente}
Let $J=(x^3, x^2y^2, y^3)\subset A[x,y]$. Its border can be written (in increasing order by degree) as  $x^3, y^3, x^2y^2,  x^3y ,xy^3 $. 
The multiplicative sets of the corresponding RS  $\mathcal J$  are:
$\tau_{x^3}=\TT[x]$, 
$\tau_{y^3}=\TT[y]$, 
$\tau_{x^2y^2}=\{1\}$,  $\tau_{xy^3}=\TT[x,y]$, and
$\tau_{x^3y}=\{x^ay^b,\, a \geq 0, \ 0\leq b\leq 1\}$.  Notice that $\mathcal J$ is not a RS with multiplicative variables. 

According to the criterion presented in   Theorem \ref{conidisgiunti}  in order to know whether a marked set  $\mathcal F=\{f_{x^3}, f_{y^3}, f_{x^2y^2},  f_{x^3y} , f_{xy^3}\}$  is a marked basis we would control the reduction of the following polynomials
$ yf_{x^3}$, $xf_{y^3}$, $xf_{x^2y^2}$, $yf_{x^2y^2}$,  $y^2f_{x^3y}  $.

\smallskip

Now we reorder the terms in $\cB(J)$ in increasing order w.r.t. {\rm DegLex} (induced by $x\prec y$) getting $x^3, y^3, x^3y,  x^2y^2,  xy^3 $. In this case the multiplicative sets   are 
$\tau'_{x^3}= \tau'_{x^3y}=\tau'_{x^2y^2} =\TT[x]$, $\tau'_{y^3}=\TT[y]$ and $\tau'_{xy^3}=\TT[x,y]$.
We get a stably ordered RS  $\mathcal J'$ with multiplicative variables. Following  Theorem \ref{conidisgiunti}  to decide whether $\mathcal F$ is a marked basis, we only have to check the reduction of
$ yf_{x^3}$, $xf_{y^3}$, $yf_{x^3y}  $,  $yf_{x^2y^2}$,  all of them  of \lq\lq linear type\rq\rq.
\end{example}

Anyway, we cannot generalize what observed in the previous example, since reordering the terms of the border w.r.t. {\rm DegLex} (or a degree compatible term order) we do not obtain necessarily a RS with multiplicative variables.

\begin{example}\label{CasoDegLex}
Ordering the  border of  $J=(x^3,xy,y^2)\subset A[x,y]$,   w.r.t {\rm DegLex} ($x\prec y$)\\
\begin{minipage}{8cm}  we obtain  $ xy,y^2,x^3,x^2y $ with  cones 
$\tau_{xy}=\{1\}$, $\tau_{y^2}=\{x^ly^h,\, l\leq1, \ h\geq 0\}$, $\tau_{x^3}=\TT[x]$, $\tau_{x^2y}=\TT[x,y]$, as in the picture.
The term $x^2$ is one of the minimal elements of $\mathcal T \setminus \tau_{y^2}$  w.r.t. divisibility.
This means that in order to verify that a marked set   $\mathcal F$ is also a marked basis we  have also to reduce $x^2 f_{y^2} $,   which is not of \lq\lq linear type\rq\rq.
\end{minipage}
\hspace{1cm}
\begin{minipage}{3cm}
\begin{center}
 \begin{tikzpicture}
\draw [thick] (0,0) rectangle (1,.5);
\node at (.5,.25) {$\scriptstyle{1}$};
\draw [thick] (1,0) rectangle (2,.5);
\node at (1.5,.25) {$\scriptstyle{x}$};
\draw [thick] (2,0) rectangle (3,.5);
\node at (2.5,.25) {$\scriptstyle{x^2}$};
\draw [thick, fill=MT1] (3,0) rectangle (4,.5);
\node at (3.5,.25) {$\scriptstyle{x^3}$};
\draw [thick, fill=MT1] (4,0) rectangle (5,.5);
\node at (4.5,.25) {$\rightarrow$};
\draw [thick] (0,.5) rectangle (1,1);
\node at (.5,.75) {$\scriptstyle{y}$};
\draw [thick, fill=MT2] (1,0.5) rectangle (2,1);
\node at (1.5,.75) {$\scriptstyle{xy}$};
\draw [thick, fill=MT3] (2,0.5) rectangle (3,1);
\node at (2.5,.75) {$\scriptstyle{x^2y}$};
\draw [thick, fill=MT3] (3,0.5) rectangle (4,1);
\node at (3.5,.75) {$\rightarrow $};
\draw [thick, fill=MT3] (2,1) rectangle (3,1.5);
\node at (2.5,1.25) {$\uparrow$};
\draw [thick,fill=MT4] (0,1) rectangle (1,1.5);
\node at (.5,1.25) {$\scriptstyle{y^2}$};
\draw [thick,fill=MT4] (1,1) rectangle (2,1.5);
\node at (1.5,1.25) {$\uparrow  $};
\draw [thick,fill=MT4] (0,1.5) rectangle (1,2);
\node at (.5,1.75) {$\uparrow$};
\draw[->] (0,0)--(0,3);
\draw[->] (0,0)--(6,0);.
\end{tikzpicture}
\end{center}
\end{minipage} 
\end{example}

The most convenient choice in general is to forget the degree and reorder the terms w.r.t.  {\rm Lex}.

\begin{theorem}\label{LexDiventaJanet}
Let $J$ be a zero-dimensional monomial ideal and let $M=\cB(J)$ be its border.
 Consider $M$ ordered w.r.t. the lexicographic term order $\prec_ {\rm Lex} $  and let $\mathcal J$ be the associated  RS according to Table \ref{Border}.

 Then $\mathcal J$   has multiplicative variables, which   for every   $x^\alpha \in \cB(J)$ coincide  with the    Janet-multiplicative variables  for  $x^\alpha$ w.r.t. $\cB(J)$, so  
 $\cB(J)$ is a Janet complete system.
\end{theorem}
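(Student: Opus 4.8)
The plan is to reduce everything to the single equality $\tau_\alpha=\TT[\mu_\alpha]$ for each $x^\alpha\in\cB(J)$, where $\mu_\alpha$ is the set of variables that are Janet-multiplicative for $x^\alpha$ w.r.t. $\cB(J)$. Granting this, $\mathcal J$ has multiplicative variables and they are precisely the Janet-multiplicative ones; moreover, since by Lemma~\ref{order} the cones $x^\alpha\tau_\alpha$ are pairwise disjoint and cover $J$, every $x^\gamma\in J$ has a unique factorization $x^\gamma=x^\alpha x^\eta$ with $x^\alpha\in\cB(J)$ and $x^\eta\in\tau_\alpha=\TT[\mu_\alpha]$ a product of Janet-multiplicative variables of $x^\alpha$, which is exactly the definition of a Janet-complete system. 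I will use throughout the reformulation coming from Table~\ref{Border}: $x^\eta\in\tau_\alpha$ iff no element of $\cB(J)$ that is $\prec_{\mathrm{Lex}}$-larger than $x^\alpha$ divides $x^{\alpha+\eta}$; and that, writing $\alpha_l$ for the exponent of $x_l$ in $x^\alpha$ and recalling $x_1<\dots<x_n$, the relation $x^\beta\succ_{\mathrm{Lex}}x^\alpha$ with pivot $m$ means $\beta_l=\alpha_l$ for $l>m$ and $\beta_m>\alpha_m$.

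First I would prove the easy inclusion $\TT[\mu_\alpha]\subseteq\tau_\alpha$. If every variable dividing $x^\eta$ lies in $\mu_\alpha$ but $x^\eta\notin\tau_\alpha$, pick $x^\beta\in\cB(J)$ with $x^\beta\succ_{\mathrm{Lex}}x^\alpha$ and $x^\beta\mid x^{\alpha+\eta}$, and let $m$ be its pivot; divisibility at $m$ forces $\eta_m\ge\beta_m-\alpha_m\ge1$, so $x_m$ divides $x^\eta$ and hence $x_m\in\mu_\alpha$, i.e. $x_m$ is Janet-multiplicative. But $x^\beta$ witnesses the opposite (it agrees with $x^\alpha$ above $m$ and exceeds it at $m$), a contradiction.

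The reverse inclusion $\tau_\alpha\subseteq\TT[\mu_\alpha]$ is the core. By contraposition it suffices to show that if $x_k$ is \emph{not} Janet-multiplicative for $x^\alpha$ then $x^\eta\notin\tau_\alpha$ for every $x^\eta$ divisible by $x_k$; and since $x^{\alpha+e_k}\mid x^{\alpha+\eta}$ (with $e_k$ the $k$-th unit vector), a $\prec_{\mathrm{Lex}}$-larger border divisor of $x^{\alpha+e_k}$ is also one of $x^{\alpha+\eta}$, so it is enough to treat $x^\eta=x_k$. The whole argument thus rests on the key claim $(\star)$: \emph{if $x_k$ is not Janet-multiplicative for $x^\alpha$, then $x^{\alpha+e_k}$ has a border divisor $\succ_{\mathrm{Lex}}x^\alpha$}. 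To prove $(\star)$ I would consider the finite nonempty ``box'' $D:=\{x^\delta\in J:\ \delta_l=\alpha_l\ (l>k),\ \delta_k=\alpha_k+1,\ \delta_l\le\alpha_l\ (l<k)\}$ (it contains $x^{\alpha+e_k}$) and pick $x^\delta\in D$ minimal for divisibility. Such an $x^\delta$ is automatically $\succ_{\mathrm{Lex}}x^\alpha$ and divides $x^{\alpha+e_k}$, so it remains only to show $x^\delta\in\cB(J)$. If not, then $x^{\delta-e_j}\in J$ for every variable $x_j\mid x^\delta$; reducing a low variable ($j<k$) would yield a smaller member of $D$, contradicting minimality, so $x^\delta$ is supported only on $x_k$ and the variables above $k$, and $x^{\delta_0}:=x^{\delta-e_k}\in J$ divides $x^\alpha$. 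Now I bring in the witness $x^\beta\in\cB(J)$ of non-Janet-multiplicativity ($\beta_l=\alpha_l$ for $l>k$, $\beta_k>\alpha_k$) and its border direction $q$, i.e. $x^{\beta-e_q}\in\cN(J)$: a short divisibility check shows $q>k$ (for $q\le k$ one verifies $x^{\delta_0}\mid x^{\beta-e_q}$, forcing $x^{\beta-e_q}\in J$), and then $x^{\delta-e_q}\in J$ divides $x^{\beta-e_q}$, again forcing $x^{\beta-e_q}\in J$ against $x^{\beta-e_q}\in\cN(J)$. This contradiction gives $x^\delta\in\cB(J)$ and proves $(\star)$.

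I expect the main obstacle to be exactly claim $(\star)$: the passage from the \emph{mere existence} of a $\prec_{\mathrm{Lex}}$-larger border element witnessing non-Janet-multiplicativity (which may sit far from $x^{\alpha+e_k}$, with large exponents in the low variables) to an \emph{actual border divisor} of $x^{\alpha+e_k}$. The low-variable degrees are precisely what makes a naive $\gcd$ construction fall out of $J$, and the minimal-box element together with the border-direction argument is what bridges the gap, using essentially the defining property of $\cB(J)$ that each of its elements drops into $\cN(J)$ along some variable. Once $\tau_\alpha=\TT[\mu_\alpha]$ is established by combining the two inclusions, the three assertions of the statement follow as explained in the first paragraph, the multiplicativity and the identification of the variables being immediate and Janet-completeness coming from the disjoint cover of Lemma~\ref{order}.
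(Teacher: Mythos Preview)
Your proof is correct and follows the same overall strategy as the paper: both reduce to showing $\tau_\alpha=\TT[\mu_\alpha]$, handle the inclusion $\TT[\mu_\alpha]\subseteq\tau_\alpha$ by the same contradiction on the Lex-pivot of a hypothetical larger border divisor, and reduce the reverse inclusion (via the order-ideal property of $\tau_\alpha$) to the key claim $(\star)$ that a non-Janet-multiplicative $x_k$ gives a Lex-larger border divisor of $x^{\alpha+e_k}$.

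The only substantive difference is in how $(\star)$ is proved. The paper sets $x^\gamma$ equal to $x^\alpha$ with the variables below $x_k$ erased, observes that $x_kx^\gamma$ divides the witness $x^\beta\in\cB(J)$ and hence lies in $\cN(J)\cup\cB(J)$ (using the fact, noted earlier in the section, that every divisor of a border element is in $\cN(J)\cup\cB(J)$), and then walks along a chain of one-variable multiplications from $x_kx^\gamma$ up to $x_kx^\alpha\in J$ to locate a border crossing. Your route instead picks a divisibility-minimal element of the box $D$ and certifies it lies on the border via a case split on the ``border direction'' $q$ of the witness $x^\beta$. Both are clean; the paper's argument is a little shorter because it leverages the divisor property of the border, whereas your argument is self-contained and does not rely on that earlier observation. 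Either way, once $(\star)$ is in hand the conclusion (multiplicative variables $=$ Janet-multiplicative variables, and Janet-completeness via the disjoint cover of Lemma~\ref{order}) follows exactly as you say.
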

\begin{proof}  Let $\mu_\alpha$ the set of Janet-multiplicative variables for $x^\alpha \in \cB(J)$. We have to prove that $\tau_\alpha=\TT[\mu_\alpha]$. 

$\supseteq$  Consider $x^\eta \in \mathcal{T}[\mu_\alpha]$ and verify that $x^\eta \in \tau_\alpha$ i.e. that there is no term  $x^\beta \in M$ dividing $x^{\eta+\alpha}$ and s.t. $x^\beta\succ_{Lex}x^\alpha$  .\\
Suppose by contradiction that such a term  $x^\beta $ exists. Let  $x_j$ be s.t.    $\deg_i(x^\beta) =\deg_i(x^\alpha)$ for each $ i>j$ and $\deg_j(x^\beta) >\deg_j(x^\alpha)$. By definition of Janet-multiplicative variable , $x_j \notin \mu_\alpha$.  
We then get a contradiction, since by   $\deg_j(x^{\eta+\alpha})\geq \deg_j(x^\beta)>\deg_j(x^\alpha)$ follows that   $x_j \mid x^\eta$ so, by hypothesis,  $x_j\in \mu_\alpha$.

 \medskip

$\subseteq$\ 
It is sufficient to prove that    $x_j\notin\mu_\alpha$  implies  $x_j  \notin \tau_\alpha$.

If $x_j\notin\mu_\alpha$ , by the definition of Janet-multiplicative variable  there is a term $x^\beta \in \cB(J)$ with $\deg_i(x^\beta) =\deg_i(x^\alpha)$ for each $ i>j$ and $\deg_j(x^\beta) >\deg_j(x^\alpha)$. We prove then that the border contains also an element   $x^{\beta'}$ with $\deg_i(x^{\beta'}) =\deg_i(x^\alpha)$ for each $ i>j$ and $\deg_j(x^{\beta'}) =\deg_j(x^\alpha)+1$, so that $x_j  \notin \tau_\alpha$.

Consider the term $ x^\gamma$ obtained by $x^\alpha$ evaluating at $1$ the variables $x_i$, $i<j$. By construction $x_jx^\gamma\mid x^\beta$ which is in the border; thus $x_jx^\gamma \in \cB(J)\cup \cN(J)$.  Moreover $x_jx^\gamma$ also divides  $x_jx^\alpha$, which belongs to   $ J$. Then, we find the wanted term  $x^{\beta'}\in \cB(J)$  in the set of the multiples of $x_jx^\gamma$ dividing  $x_jx^\alpha$.
\end{proof}

\begin{example}\label{CasoLex} Consider again the monomial ideal of Example \ref{CasoDegLex}.
The border of  $J=(x^3,xy,y^2),$ ordered w.r.t. {\rm Lex} is  $x^3, xy, x^2y, y^2$.  The multiplicative sets of the corresponding RS $\mathcal J''$ are 
$\tau''_{x^3}=\tau''_{x^2y}=\TT[x]$,  
$\tau''_{xy}=\{1\}$,  
$\tau''_{y^2}=\TT[x,y]$.

Thus, $\mathcal J''$ is a stably ordered   RS with multiplicative variables (coinciding with the   Janet-multiplicative ones).

\begin{center}
 \begin{tikzpicture}
\draw [thick] (0,0) rectangle (1,.5);
\node at (.5,.25) {$\scriptstyle{1}$};
\draw [thick] (1,0) rectangle (2,.5);
\node at (1.5,.25) {$\scriptstyle{x}$};
\draw [thick] (2,0) rectangle (3,.5);
\node at (2.5,.25) {$\scriptstyle{x^2}$};
\draw [thick, fill=MT1] (3,0) rectangle (4,.5);
\node at (3.5,.25) {$\scriptstyle{x^3}$};
\draw [thick, fill=MT1] (4,0) rectangle (5,.5);
\node at (4.5,.25) {$\rightarrow$};
\draw [thick] (0,.5) rectangle (1,1);
\node at (.5,.75) {$\scriptstyle{y}$};
\draw [thick, fill=MT2] (1,0.5) rectangle (2,1);
\node at (1.5,.75) {$\scriptstyle{xy}$};
\draw [thick, fill=MT3] (2,0.5) rectangle (3,1);
\node at (2.5,.75) {$\scriptstyle{x^2y}$};
\draw [thick, fill=MT3] (3,0.5) rectangle (4,1);
\node at (3.5,.75) {$\rightarrow$};
\draw [thick,fill=MT4] (0,1) rectangle (1,1.5);
\node at (.5,1.25) {$\scriptstyle{y^2}$};
\draw [thick,fill=MT4] (1,1) rectangle (2,1.5);
\node at (1.5,1.25) {$\rightarrow$};
\draw [thick,fill=MT4] (0,1.5) rectangle (1,2);
\node at (.5,1.75) {$\uparrow$};
\draw[->] (0,0)--(0,3);
\draw[->] (0,0)--(6,0);
\end{tikzpicture}
\end{center}
\end{example}

To conclude, we present a monomial ideal $J$  for  which the border basis (with terms ordered w.r.t. the Lex term order) is not a  Pommaret basis, event though $J$   has both type,  being quasi stable. 

\begin{example}\label{CasononPom}
The terms of the border of $J=(x^3,x^2y,y^3)\subset A[x,y]$  ordered w.r.t. {\rm Lex} are  $x^3, x^2y, x^2y^2, y^3,xy^3$.  We get: 
$\tau_{x^3}=\tau_{x^2y}=\tau_{x^2y^2}=\TT[x]$,  
$\tau_{y^3}=\TT[y]$, 
$\tau_{xy^3}=\TT[x,y]$.

The set of controls one has to perform in order to decide whether a marked set $\mathcal F=\{  f_{x^3}, f_{x^2y},f_{x^2y^2},f_{y^3},f_{xy^3}\}$ involves the reduction of $ yf_{x^3}, yf_{x^2y},yf_{x^2y^2},xf_{y^3}$.

Notice that the sets of multiplicative variables of   $y^3$ and  $xy^3$ do not coincide with the ones w.r.t. Pommaret. Indeed, in the Pommaret  basis  $\{x^3, x^2y, x^2y^2, y^3 \}$ of $J$ there is one term less than in the border basis. At least in this case, in order to determine all the ideals in  $A[x,y]$ whose quotient is a free $A$-module with basis  $\cN(J),$ it would be more convenient to use the Pommaret basis, instead of the border basis. Indeed, the set of controls that are needed involves only  three reductions: $ yf_{x^3}, yf_{x^2y},yf_{x^2y^2}$ 
\end{example}

\section*{Acknowledgements}
The third author thanks Mario Valenzano for his thorough remarks and Felice 
Cardone for useful  
bibliographical suggestions.

\appendix
\section{Functoriality of marked bases}\label{AppendixFunct}

In all this paper we consider marked sets and bases $\mathcal F$ over a RS  $\mathcal J$ with finite tails
  as a set of polynomials in the polynomial ring $\mathcal P_A$ where $A$ is a field.  However, everything holds true if  we  assume that $A$ is any  commutative ring. In fact, the only coefficients in $\mathcal F$   that we need to invert performing a reduction procedure are the leading coefficients.  It is then natural to ask whether our construction is stable under extension of scalars.  In this  appendix we give a positive answer to this question assuming that  $\mathcal J$ has finite tails.

\medskip

There are at least  two functors from the category of commutative rings  to the category of sets that is natural to associate to a RS  $ {\mathcal J }=(M,\lambda, \tau)$ in $\TT$

The functor of marked sets  on $\mathcal J$
\[ 
\MSFunctor{\mathcal J}: \underline{\text{Ring}} \rightarrow \Sets
\]
that associates to any ring  $A$ the set 
$
\MSFunctor{\mathcal J}(A):=\{  \mathcal J-\text {marked sets }  \mathcal F \subset \mathcal P_A  \} $
and to any morphism $\sigma: A \rightarrow B$ the map
\[
\begin{split}
\MSFunctor{\mathcal J}(\sigma):\ \MSFunctor{\mathcal J}(A)\ &\longrightarrow\ \MSFunctor{\mathcal J}(B)\\
\parbox{1.5cm}{\centering $\mathcal F$}\ & \longmapsto\ \sigma(\mathcal F)
\end{split}
\]
where $\sigma(\mathcal F)$ is the set of polynomials that we obtain  from those in $\mathcal F$ replacing each  coefficient $a\in A$ with its image $\sigma(a)$.  More formally, $\sigma(\mathcal F)$ is the image of $\mathcal F$ under the map $\mathcal P_A \rightarrow \mathcal P_B=\mathcal P_A\otimes_\sigma B$

 We observe that this functor is well defined since the coefficient of the distinguished  term $x^\alpha$ in each marked polynomial $f_\alpha\in \mathcal F$  is the unit element and  $\sigma(1_A)=1_B$ for every homomorphism $\sigma\colon A\rightarrow B$.  Hence $\sigma(\mathcal F)$ is indeed a $\mathcal J$-marked in $\mathcal P_B$.
 
 We will denote by $C_\mathcal J$ a set of $N:=\sum_{x^\alpha \in M} \vert \lambda_\alpha \vert$  distinct variables $C_{\alpha,\beta} $ where  $ x^\alpha \in M$ and $ x^\beta \in \lambda_\alpha$. Moreover,  $\mathfrak F$ will denote   the marked set in $\MSFunctor{\mathcal J} (\ZZ[C_\mathcal J ])$ formed by the polynomials $\mathfrak f_\alpha:= x^\alpha + \sum_{x^\beta \in \lambda_\alpha} C_{\alpha,\beta}x^\beta$.

\begin{lemma}
$\MSFunctor{\mathcal J}$  is the functor of points of the ring $\ZZ[C_\mathcal J]$. 
\end{lemma}
\begin{proof}   For every ring $A$ there is a 1--1 correspondence between   $\MSFunctor{\mathcal J}(A)$ and  $Hom (\ZZ[C_\mathcal J] ,A)$. 

In fact we can associate to every homomorphism 
$\pi: \ZZ[C_\mathcal J ] \rightarrow A $ the marked set $ \pi(\mathfrak F) \in \MSFunctor{\mathcal J}(A)$ and, on the other hand, every marked set  $\mathcal F =\{ f_\alpha:= x^\alpha + \sum_{x^\beta \in \lambda_\alpha} c_{\alpha,\beta}x^{\beta}\}\in \MSFunctor{\mathcal J}(A)$ can be obtained in this way considering the homomorphism $\pi_\mathcal F: \ZZ[C_\mathcal J ] \rightarrow A $ given by $\pi_\mathcal F(C_{\alpha, \beta })=c_{\alpha, \beta }$.

Obviously, this  1--1 correspondence commutes with the extension of scalars, since for every homomorphism $\sigma \colon A\rightarrow B$ we have:
$ \sigma(\mathcal F)=\{\sigma( f_\alpha)= x^\alpha + \sum_{x^\beta \in \lambda_\alpha} \sigma(c_{\alpha,\beta})x^{\beta}\}$, and so
$   \pi_{\sigma(\mathcal F)}=\sigma\circ \pi_\mathcal F.$
\end{proof}

As well know,  the category of affine schemes is equivalent to the the category of  rings.  Therefore, we can also define  $\MSFunctor{\mathcal J}$ as a contravariant functor   $\underline{\text{AfScheme}} \rightarrow \Sets$   and say that it is representable  by  the scheme $\mathbb A^N_{\ZZ}=\rm{Spec} (\ZZ[C_\mathcal J  ] )$.

\medskip

 Focusing on the marked bases, we get an even more interesting functor, as a    subfunctor  of $\MSFunctor{\mathcal J}$:

\[\MFFunctor{\mathcal J}(A):=\{ \mathcal J\text {-marked bases }  \mathcal F \subset \mathcal P_A  \}.\]
We now prove that this is in fact a functor.

\begin{lemma}
Let $\mathcal F\in \MFFunctor{\mathcal J}(A)$ and let us consider any morphism $\sigma: A \rightarrow B$. 

Then $\sigma (\mathcal F)$ is a marked basis in $\mathcal P_B$.
\end{lemma}
\begin{proof}  By definition (Definition \ref{BaseMark})  a $\mathcal J$-marked set $\mathcal G\in \mathcal P_R$ is a   basis if and only if $(\mathcal G)_R\oplus \langle \cN(J) \rangle_R=\PP_R$. 

Therefore, by hypothesis we know that  $(\mathcal F)_A\oplus \langle \cN(J) \rangle_A=\PP_A$, and applying $-\otimes_\sigma B$ we get 
 $(\sigma(\mathcal F))_B \oplus \langle \cN(J) \rangle_B=\PP_B$.
\end{proof}

Under the additional assumption  that   $\mathcal J$   is weakly noetherian,  also this subfunctor turns out to be representable by a quotient of $\ZZ[C_\mathcal J]$,  or, equivalently, by an affine subscheme of $\mathbb A^N_{\ZZ}$.  
Similarly to what has been done in  \cite{LR},  we now show how this subscheme can be  obtained.\\
 Let us consider the marked set $\mathfrak F$ in $\mathcal P_{\ZZ[C_\mathcal J]}$   and  compute all  the complete reductions  $ x^{\eta} \mathbf f_{\alpha}  \rightarrow_{\mathbf F \mathcal J}^+ \mathbf g \downarrow$ for every $x^\alpha \in M$ and $x^\eta \in \TT$ and collect  in a set $\mathcal R\subset \ZZ[C_\mathcal J]$    the coefficients of   the monomials   $x^\eta \in \cN((M))$ of all the reduced polynomials  $\mathbf g$.   By Proposition  \ref{caratterizzazioneconfluenza} and  Theorem \ref{sommadiretta}   the marked set $\pi(\mathbf F)$, where $\pi\colon \ZZ[C_\mathcal J]\rightarrow \ZZ[C_\mathcal J]/(\mathcal{R})$, is in fact a marked basis. 

The functor   $\MFFunctor{\mathcal J}$   is represented by the scheme $\MFScheme{\mathcal J}:= \mathrm{Spec}(\ZZ[C_\mathcal J]/(\mathcal{R}))$. 
For a  detailed proof see  \cite{LR}: the arguments presented there  also apply  in our, more general, framework.   

There are many possible applications of the functorial approach to RSs, first of all to the study of Hilbert schemes since the marked schemes $\MFScheme{\mathcal J}$ are flat families. In \cite{BCGR} a subfunctor of $\MFFunctor{\mathcal J}$  for a suitable RS $\mathcal J$ is used to investigate the set of $x_n$-liftings of a given homogeneous ideal. We conclude with an aplication to the theory of marked bases:  for every RS $\mathcal J$ we can check whether the $\mathcal J$-marked sets are bases performing a finite set of reductions.
 
\begin{corollary}  Let $\mathcal J=(M, \lambda, \tau)$  be a weakly noetherian RS with finite tails. 

Then, there exists a finite subset $ G \subset \TT \times M$ such that for every marked set $\mathcal F$  on $\mathcal J$ TFAE:  
\begin{enumerate}[1)]
\item   $\mathcal F$ is a marked basis
\item  for all  $(x^\eta, x^\alpha) \in  G$ and   for all reduction  $  x^{\eta} f_{\alpha}  \rightarrow_{\mathcal F \mathcal J}^+ l\downarrow  $ it holds  $ l=0$.
\item  for all  $(x^\eta, x^\alpha) \in  G$ there is a reduction   $  x^{\eta} f_{\alpha}  \rightarrow_{\mathcal F \mathcal J}^+ 0\downarrow$
\end{enumerate}
\end{corollary}
\begin{proof}
By the noetherianity of the ring $\ZZ[C]$ there exists  a finite set  $\mathcal R' \subset \mathcal R$ that generates the ideal $(\mathcal R)$. For every element  $r\in \mathcal R'$ let us choose  $x^\eta \in \TT$ and $\mathbf f_\alpha \in \mathbf F$ and a reduction   $  x^{\eta}\mathbf  f_{\alpha} \rightarrow_{\mathcal F \mathcal J}^+  \mathbf l \downarrow $  s.t. $r$ is a coefficient in $\mathbf l$; then let us collect in  $G$ the pairs $(x^\eta, x^\alpha)$.  

The thesis is a direct consequence of the fact that $\mathcal F:=\{f_\alpha+\sum_{x^\gamma \in \lambda_\alpha}c_{\alpha \gamma}x^\gamma, \   x^\alpha \in M \}\subset \PP_A$ is a marked basis on $\mathcal J$ if and only if the morphism $\sigma \colon \ZZ[C] \rightarrow A$ given by $\sigma(C_{\alpha \gamma} ) =c_{\alpha \gamma}$ factorizes through $\ZZ[C]/(\mathcal{R})=\ZZ[C]/(\mathcal{R}')$.
\end{proof}

In the case of homogeneous structures,  due to  Gotzmann Persistence \cite{Gotz} and Macaulay Estimate of  Growth \cite[Theorem 3.3]{Gre},  the controls one has to perform can be limited 
to the polynomials whose degree is  bounded from above by   $1+r$, where $r$ is the maximum between  the maximal degree of terms in $M$ and the Castelnuovo-Mumford Regularity of the monomial ideal  $J=(M)$.

A similar upper bound on the degree of polynomials involved in a sufficient set of controls appears also in the affine case in \cite[Theorems 5.1 and 5.4]{BCR}; indeed,  those affine marked  sets   are marked bases if  the following  refinement of the condition \emph{\ref{sommadiretta_ii})} of Theorem \ref{sommadiretta} holds:  $(\mathcal F)_{\leq t}=\langle  {\tau}{\mathcal F}\rangle_{\leq t} $  for some integers  $t \leq  r+1$. 

Finally the recent result proved by \cite{W-W} gives a further bound:
$$(\mathcal F)_{\leq t}=\langle  {\tau}{\mathcal F}\rangle_{\leq t} \mbox{ for all } t\geq2\left(\frac{d^2}{2}+d\right)^{2^{n-1}}+\sum _{j=0}^{n-1} (u d)^{2^j}$$ 
where $d=\max\deg(f : f\in{\mathcal F})$ and $u=\#\mathcal F$.

 In     the above quoted cases  we should  perform  a finite (but in general not small)  number of controls.

\end{document}